	\let\Cref\crtCref
	\let\cref\crtcref
\crefname{equation}{}{}
\Crefname{claim}{Claim}{Claims}
\let\tp\texorpdfstring%
\newcommand{\ignore}[1]{\relax}
\definecolor{brown}{cmyk}{0, 0.72, 1, 0.45}
\definecolor{grey}{gray}{0.5}
\definecolor{lightRed}{cmyk}{0, 0.3, 0.3, 0.0}
\subjclass[2010]{Primary: 68Q87, 05C80, 60C05; Secondary: 05C22, 90B15}
\numberwithin{equation}{section}
\DeclarePairedDelimiter{\set} {\{} {\}}
\DeclarePairedDelimiter{\card}{|}{|}
\DeclarePairedDelimiter{\abs}{|}{|}
\newcommand{\ceil}[1]{\left\lceil{#1}\right\rceil}
\newcommand{\floor}[1]{\left\lfloor{#1}\right\rfloor}
\newcommand{\parens}[1]{\left({#1}\right)}
\theoremstyle{plain}
\newtheorem{theorem}{Theorem}[section]
\newtheorem{lemma}[theorem]{Lemma}
\newtheorem{conjecture}[theorem]{Conjecture}
\newtheorem{claim}[theorem]{Claim}
\newtheorem{remark}[theorem]{Remark}
\theoremstyle{definition}
\theoremstyle{remark}
\DeclareMathOperator{\E}{\mathbb{E}} 
\DeclareMathOperator{\Exp}{Exp} 
\DeclareMathOperator{\Bi}{Bi} 
\DeclareMathOperator{\Bern}{Bernoulli} 
\DeclareMathOperator{\Prob}{\mathbb{P}} 
\renewcommand{\Pr}{\Prob}
\newcommand{\Prr}[1]{\Pr\left({#1}\right)}
\newcommand{\Med}{\operatorname{Med}}
\newcommand{\whp}{w.h.p\xperiod} 
\newcommand{\Whp}{W.h.p\xperiod} 
\newcommand{\tend}{\longrightarrow}
\newcommand\pto{\overset{\mathrm{p}}{\tend}}
\newcommand\downto{\searrow}
\newcommand\eqdef{\stackrel{\operatorname{def}}{=}}
\newcommand{\eps}{\varepsilon}
\newcommand{\la}{\lambda}
\newcommand{\OO}[1]{O\left({#1}\right)}
\renewcommand{\k}{k}
\newcommand{\rad}{\operatorname{rad}}
\newcommand{\iid}{i.i.d\xperiod}
\newcommand{\st}{$s$--$t$\xspace}
\newcommand{\ste}{\ensuremath{\set{s,t}}\xspace}
\renewcommand{\a}{\alpha}
\newcommand{\tmbf}[1]{\textbf{\boldmath{#1}}}
\newcommand{\kcost}{\frac{2\k + \ln n}{n}}
\newcommand{\RR}{\ensuremath{R}\xspace}
\newcommand{\dd}{d} 
\newcommand{\astar}{a^\star}
\newcommand{\eab}{\set{a,b}}
\newcommand{\Xab}{X_{a,b}}
\newcommand{\lfrac}[2]{#1/#2}
\newcommand{\rr}{r_}
\newcommand{\Rk}{R^{(k)}}
\newcommand{\Rsub}{\rho_}
\newcommand{\kk}{k+\rr0}
\newcommand{\kp}{{k+1}}
\newcommand{\km}{{k-1}}
\newcommand{\nm}{{n-1}}
\newcommand{\os}[1]{_{(#1)}}
\newcommand{\Wo}[1]{W\os{#1}}
\newcommand{\Wok}{W\os k}
\newcommand{\Wokp}{W\os{\kp}}
\newcommand{\Wokk}{W\os{\kk}}
\newcommand{\Woi}{W\os i}
\newcommand{\Uok}{U\os k}
\newcommand{\kbar}{\bar{k}}
\renewcommand{\log}{\ln}
\newcommand{\wo}{w_0}
\newcommand{\ope}{(1+\eps)}
\newcommand{\dist}{\operatorname{dist}}
\newcommand{\Bk}{{B_k}}
\newcommand{\epsk}{\eps_k}
\newcommand{\SkLower}{(1-\eps)\sum_{i=1}^{k}\left(\frac{2i+\ln n}{n}\right)}
\newcommand{\Klower}{\sqrt{\ln n}}
\newcommand{\Khigher}{n-1}
\newcommand{\KrangeHigh}{\ln^{11/10} n}
\newcommand{\nto}{\ln^3 n}
\newcommand{\sumkk}{\sum_{\kbar=1}^{n/2}}
\newcommand{\kstar}{{k^\star}}
\newcommand{\SE}{^{(E)}}
\newcommand{\SU}{^{(U)}}
\renewcommand{\asymp}{\sim}
\newcommand{\eqnote}[1]{\quad\text{(#1)}}
\newcommand{\Ce}{C_\eps}
\newcommand{\CB}{C_B}
\newcommand{\stebound}{n^{0.01}}
\newcommand{\evp}[1]{\mathcal{P}_{#1}}
\newcommand{\evl}[1]{\mathcal{L}_{#1}}
\newcommand{\eva}{\mathcal{A}}
\newcommand\llabel[1]{&\refstepcounter{equation}(\theequation)\ltx@label{#1}&}
\newcommand{\Bp}{B_k}
\newcommand{\epsp}{\eps_k}
\newcommand{\expkstar}{n-\sqrt{n}}
\newcommand{\RD}{L}
\newcommand{\expcutoff}{n/2}
\newcommand{\Ukbaseval}{3n^{-2/10}}
\renewcommand{\Klower}{\sqrt[3]{\ln n}}
\newcommand{\EWW}[1]{\E \Wo{#1}}
\newcommand\ER{Erd\H os--R\'enyi\xspace}
\title[Successive shortest paths]{Successive shortest paths in \\ complete graphs with random edge weights}
\date{10 October 2020}
\author[Stefanie Gerke]{Stefanie Gerke}
\address[Stefanie Gerke]{Department of Mathematics,
Royal Holloway University of London,
Egham Hill, Egham TW20 0EX, England}
\email{stefanie.gerke@rhul.ac.uk}
\author{Bal\'azs F. Mezei}
\address[Bal\'azs F. Mezei]{
Department of Computer Science, University of Oxford,
Wolfson Building,
Parks Road,
Oxford
OX1 3QD,
England
\textnormal{
(Previously, for most of the project, 
Department of Mathematics,
Royal Holloway University of London,
Egham Hill, Egham TW20 0EX, England.)}
}
\email{balazs.mezei@cs.ox.ac.uk}
\author[Gregory B. Sorkin]{Gregory B. Sorkin}
\address[Gregory B. Sorkin]{Department of Mathematics,
The London School of Economics and Political Science,
Houghton Street, London WC2A 2AE, England}
\email{g.b.sorkin@lse.ac.uk}
\begin{document}

\begin{abstract}
Consider a complete graph $K_n$ with edge weights
drawn independently from a uniform distribution $U(0,1)$.
The weight of the shortest (minimum-weight) path $P_1$ between two given vertices
is known to be $\ln n / n$, asymptotically.
Define a second-shortest path $P_2$ to be the shortest path edge-disjoint from $P_1$, and consider more generally the shortest path $P_k$ edge-disjoint from all earlier paths.
We show that the cost $X_k$ of $P_k$
converges in probability to $2k/n+\ln n/n$
uniformly for all $k \leq n-1$.
We show analogous results when the edge weights are drawn from an
exponential distribution.
The same results characterise the collectively cheapest $k$
edge-disjoint paths, i.e., a minimum-cost $k$-flow.
We also obtain the expectation of $X_k$ conditioned on the existence of $P_k$.
\vspace*{-2em}
\end{abstract}

\maketitle

\section{Introduction}

It is a standard problem to find the shortest \st path in a graph,
i.e., the cheapest path $P_1$ between specified vertices $s$ and $t$,
and its cost $X_1$,
where the cost of a path is the sum of the costs of its edges.
We will use the terms ``cost'' and ``weight'' interchangeably,
and reserve ``length'' for the number of edges in a path.

Consider the complete graph $G=K_n$ with each edge $\set{u,v}$ having weight $w(u,v)$,
where the $w(u,v)$ are i.i.d.\ random variables with exponential distribution $\Exp(1)$
or uniform distribution $U(0,1)$ (we consider both versions).
In this random setting, a well-known result of Janson~\cite{Janson123} is that as $n \to \infty$,
\begin{equation}\label{eq:svante-X1}
	\frac{X_1}{\ln n / n } \pto 1 .
\end{equation}

We define the second cheapest path $P_2$, with cost $X_2$, to be the cheapest \st path edge-disjoint from $P_1$,
and in general define $P_k$, with cost $X_k$, to be the cheapest \st path edge-disjoint from
$P_1 \cup \cdots \cup P_{k-1}$,
provided such a path exists.
We also think of this as finding path $P_k$ after the
preceding paths' edges have been removed.
Our question is how the costs $X_k$ behave in the limit as $n \to \infty$
(this limit is implicit throughout).
Our main result is the following.

\begin{theorem}\label{Tmain}
In the complete graph $K_n$ with \iid uniform $U(0,1)$ edge weights,
with $X_k$ the cost of the $k$th cheapest path,
\begin{align}
 \frac{X_k}{2k/n + \ln n / n} & \pto 1 \label{eq:UnifMain}
\end{align}
uniformly for all $k \leq n-1$.
That is, for any $\eps>0$, asymptotically almost surely, for every $k =1,\ldots,n-1$,
\begin{align}\label{Xkbounds}
   1-\eps &\leq
     \frac{X_k}{2k/n + \ln n / n}
     \leq 1+\eps .
\end{align}
\end{theorem}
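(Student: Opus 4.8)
The plan is to prove matching upper and lower bounds on $X_k$, uniformly in $k$, after two preliminary reductions. First, \whp every path $P_1,\dots,P_{n-1}$ uses $O(\log n)$ edges: a fixed \st path of length $\ell$ has weight at most $c$ with probability at most $c^\ell/\ell!$ and there are fewer than $n^\ell$ of them, so no \st path of length $\ge C\log n$ has weight below, say, $2$; hence $F_k:=P_1\cup\dots\cup P_{k-1}$ has exactly $k-1$ edges at each of $s$ and $t$ and only $O(k\log n)=o(n^2)$ edges in total. Second, standard order-statistics bounds: \whp, uniformly over $k$, the $k$-th lightest edge at $s$ (and at $t$) has weight $(1\pm o(1))k/n$, only the upper bound being needed for the lightest few. (It is also convenient to couple the $U(0,1)$ model to the $\Exp(1)$ model, since every weight that matters — on a path of weight $o(1)$, or among the lightest $O(k)$ edges at $s$ or $t$ — is $o(1)$, where the two laws agree to first order; memorylessness then streamlines the structural analysis below.)

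For the upper bound, fix $\eps>0$ and $k$. As $F_k$ contains only $k-1$ edges at $s$, at least one of the $k$ lightest edges at $s$ is not in $F_k$, and it has weight $\le(1+o(1))k/n$; taking the $k+O(1)$ lightest gives a bounded number of available ``entry'' vertices $v$ of weight $\le(1+o(1))k/n$, and symmetrically a bounded number of ``exit'' vertices $u$ at $t$. It remains to join some $v$ to some $u$ inside $G\setminus F_k$ while avoiding $s$ and $t$: a Janson-type upper bound on the \st distance, valid after conditioning on $F_k$ and on the light edges at $s$ and $t$ (deleting $o(n^2)$ edges and revealing $O(1)$ light edges perturbs distances negligibly), shows that \whp some such pair satisfies $\dist_{G\setminus F_k}(v,u)\le(1+\eps)\ln n/n$. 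Concatenating the $s$-edge, this path and the $t$-edge yields an \st path disjoint from $F_k$ of weight $\le(1+\eps)(2k/n+\ln n/n)$, so $X_k\le(1+\eps)(2k/n+\ln n/n)$; when $k=O(\log n)$ the $2k/n$ term is swallowed by $\ln n/n$, so a bounded number of candidates still suffices.

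For the lower bound the crux is a \emph{structural claim}: \whp, uniformly over $k$, the paths $P_1,\dots,P_{k-1}$ consume (up to $o(k)$ of them) the $k-1$ lightest edges at $s$, and likewise at $t$, so that every edge at $s$ or $t$ not in $F_k$ has weight $\ge(1-\eps)k/n$. Since any \st path disjoint from $F_k$ starts with an available edge at $s$ and ends with a distinct available edge at $t$, the claim gives $X_k\ge2(1-\eps)k/n$, which already implies \eqref{Xkbounds} once $k\ge C_\eps\log n$, because then $\ln n/n=o(k/n)$. For $k=O(\log n)$ one must recover the missing $\ln n/n$: there the light available edges at $s$ and at $t$ lie in fixed sets of $O(\log n)$ edges, and a Janson-type lower bound on bulk distances — union-bounded over these $O(\log n)$ vertices and robust to deleting the $o(n^2)$ edges of $F_k$ — shows that an \st path disjoint from $F_k$ beginning and ending with light available edges also crosses the bulk at cost $\ge(1-\eps)\ln n/n$, while a path using a heavier edge at $s$ or $t$ is already expensive enough. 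Combining the two ranges gives $X_k\ge(1-\eps)(2k/n+\ln n/n)$ uniformly in $k$, which with the upper bound proves \eqref{Xkbounds}.

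The structural claim is where I expect the real difficulty. To prove it I would run the Dijkstra exploration from $s$ in the residual graph $G\setminus F_i$: its first step pays the lightest available edge at $s$, and $P_i$ routes through the $j$-th lightest available $s$-neighbour only if the resulting saving in distance-to-$t$ beats the $\gtrsim(j-1)/n$ extra paid at $s$. Because the distances to $t$ from the lightest few available $s$-neighbours fluctuate only on the scale $\Theta(1/n)$ — comparable to the gaps between consecutive available $s$-edge weights — $P_i$ uses one of the $O(1)$ lightest available $s$-edges, with tail probability decaying exponentially in the rank; the one systematic exception is that once $i\approx n\,w(s,t)/2$ the cheapest path becomes the direct edge $\{s,t\}$, so $P_i$ may consume a heavier $s$-edge — but the light edge it then skips is picked up by the next non-exceptional path, so the collective bookkeeping is unaffected up to $o(k)$. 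Making this quantitative needs control of the bulk-distance fluctuations (via the logistic-type description of $\dist(v,t)$ over $v$, or a second-moment estimate) and a check that the $o(n^2)$ deletions in $F_i$ do not distort them, followed by a concentration/martingale argument over $i=1,\dots,k-1$ (and the symmetric argument at $t$). This interplay between the evolving residual graph near the endpoints and the near-constancy of distances through the bulk is, I expect, the technical heart of the proof.
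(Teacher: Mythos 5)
Your upper bound hides the paper's central difficulty inside one parenthesis. The assertion that a ``Janson-type upper bound on the \st distance, valid after conditioning on $F_k$'' holds because ``deleting $o(n^2)$ edges and revealing $O(1)$ light edges perturbs distances negligibly'' is exactly the step that needs a proof, and as stated it is false: $F_k$ is not a fixed, weight-independent edge set but the union of the $k-1$ \emph{cheapest} paths, so conditioning on it conditions every edge weight in the graph, and the deleted edges are biased towards precisely the cheap edges that make the bulk distance $\approx \ln n/n$. A weight-dependent deletion of $o(n^2)$ edges can inflate distances by a constant factor (deleting all edges of weight at most $2\ln n/n$ removes only $O(n\ln n)$ edges yet at least doubles every distance), and with only $O(1)$ candidate entry/exit pairs it is entirely plausible a priori that the earlier paths have destroyed every cheap connection between them. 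This is the very obstacle the paper flags (``the argument gives no characterisation of $P_2$, only of $P_2'$'') and resolves by a different mechanism: it builds a robust structure $R$ from \emph{unconditioned} randomness and shows that \emph{any} adversarial deletion obeying budgets that the true paths satisfy \whp (exactly $k$ edges at each of $s$ and $t$, a heavy-edge budget, a total edge count, and for large $k$ a total-weight budget $B_k$ obtained by offsetting the upper bounds on $\sum_i X_i$ against the forced spend $\sum_i (W_{(i)}^s+W_{(i)}^t)$ on root edges) cannot destroy all cheap \st paths in $R$, with a union bound over the adversary's choices. Nothing in your sketch plays this role, and reaching $k$ all the way to $n-1$ uniformly requires the refined $B_k,\eps_k$ accounting of the large-$k$ sections, which the ``bounded number of candidates'' picture does not engage with.

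The lower bound has a gap of the same origin: your structural claim (that $P_1,\dots,P_{k-1}$ consume, up to $o(k)$ exceptions, the $k-1$ lightest edges at $s$ and at $t$) is only supported by a Dijkstra/fluctuation heuristic that again requires controlling the residual graph conditioned on all previous paths, and it is stronger than what is needed. The paper avoids it entirely: since the first $k$ paths use distinct edges at $s$ and at $t$, one gets for free $S_k=\sum_{i\le k}X_i\ \ge\ \sum_{i\le k-1}\bigl(W_{(i)}^s+W_{(i)}^t\bigr)$, plus (for small $k$) a bulk contribution of $(1-\tfrac34\eps)\ln n/n$ per path, proved by a union bound over the $\ln^3 n$ cheapest root edges rather than over a conditioned residual graph; the pointwise bound on $X_k$ is then extracted from $L_k\le S_k\le U_k$ and monotonicity via $tX_k\ \ge\ S_k-S_{k-t}$ with $t\approx k\sqrt{\delta}$. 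That prefix-sum route would repair your lower bound, but the unjustified robustness-under-conditioning step in the upper bound is the fundamental missing idea.
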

Naturally, with $k=1$, \cref{eq:UnifMain} recovers
Janson's result \cref{eq:svante-X1},
since $2/n=o(\ln n/n)$.

As discussed shortly, in contrast to many cases,
the result for the uniform distribution does not extend immediately to
all distributions with positive density at 0.
However, we have a corresponding result for exponentially distributed edge weights.
Given an edge-weight distribution, let $\Wok$ be the (random)
weight of the $k$th cheapest edge out of a vertex
(the $k$th order statistic of $n-1$ edge weights).

\begin{theorem}\label{Texp}
In the complete graph $K_n$ with \iid exponential edge weights with mean 1,
\begin{align}
  \frac{X_k}{2 \E \Wok + \ln n / n} & \pto 1 \label{eq:ExpMain}
\end{align}
uniformly for all $k \leq n-1$.
\end{theorem}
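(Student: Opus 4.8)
The plan is to reduce the exponential statement to the uniform one established in Theorem~\ref{Tmain}, exploiting the fact that the two distributions agree to first order near $0$ and that only small edge weights matter. Concretely, if $U\sim U(0,1)$ then $-\ln(1-U)\sim\Exp(1)$, so there is a monotone coupling between uniform weights $w^U(e)$ and exponential weights $w^E(e)=-\ln(1-w^U(e))$ on the same edge set. Under this coupling every path has exponential cost at least its uniform cost, and for any path using only edges of uniform weight $\le\delta$ the two costs differ by a factor in $[1,1+\delta]$ (since $-\ln(1-x)\le x/(1-\delta)$ for $x\le\delta$). The first step is therefore to show that, a.a.s., for all $k\le n-1$ simultaneously, the optimal uniform path $P_k^U$ (and likewise the optimal exponential path) uses only edges of weight $O((\ln n)/n \cdot \text{polylog})=o(1)$; this should already be available from, or provable by the same union-bound/first-moment arguments as, the proof of Theorem~\ref{Tmain}, since a path of length $\ell$ and cost $\approx (2k+\ln n)/n$ that used an edge of weight $\delta$ would be beaten by a cheaper detour. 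Granting this, the coupling immediately gives $X_k^U \le X_k^E \le (1+o(1))X_k^U$ uniformly in $k$, hence $X_k^E/(2k/n+\ln n/n)\pto 1$ uniformly.

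The second step is to replace the normalisation $2k/n+\ln n/n$ by $2\,\E\Wok+\ln n/n$, i.e.\ to check these are asymptotically equal, uniformly in $k\le n-1$. For exponential weights, $\Wok$ is the $k$th order statistic of $n-1$ i.i.d.\ $\Exp(1)$ variables, so $\E\Wok=\sum_{j=1}^{k}\frac{1}{n-j}$. For $k$ up to, say, $n-\sqrt n$ this is $(1+o(1))k/n$ uniformly when $k=\omega(1)$, and is $o(\ln n/n)$ when $k=O(1)$; in either regime $2\,\E\Wok+\ln n/n=(1+o(1))(2k/n+\ln n/n)$, so the two theorems have (asymptotically) the same statement there and Step~1 suffices. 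The only genuinely different regime is $k$ very close to $n-1$, where $\E\Wok=\sum_{j=1}^k 1/(n-j)$ grows like $\ln n$ and the harmonic tail is no longer linear in $k$; there the more refined normalisation $2\,\E\Wok$ is needed precisely because $2k/n$ is no longer the right constant, and one must run the actual exponential analogue of the Theorem~\ref{Tmain} argument rather than quote it. I expect this endpoint regime $k=n-O(\sqrt n)$ or so to be the main obstacle: one needs upper and lower bounds on $X_k$ that track $2\,\E\Wok$ rather than $2k/n$, which means redoing the key estimates (the lower bound via the cost of the cheapest few edges at $s$ and $t$, and the upper bound via an explicit routing/greedy construction) with exponential order statistics in place of their uniform approximations.

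For completeness, the exponential lower bound should follow the same template as in the uniform proof: any $k$ edge-disjoint \st paths use at least $k$ distinct edges at $s$ and at least $k$ at $t$, contributing at least $2\sum_{i=1}^k \Wo i\ge 2\sum_{i=1}^k\E\Wo i\,(1-\eps)=2(1-\eps)\,\E\Wok\cdot(1+o(1))$ a.a.s.\ (with $\E\Wok$ here meaning the running sum $\sum_{i\le k}\E\Wo i$; by concentration of order statistics the random sum matches its mean up to $(1\pm\eps)$ uniformly), plus a $(1-\eps)\ln n/n$ term from the "middle" of the paths exactly as in the uniform case. The upper bound re-uses the path-construction of Theorem~\ref{Tmain} but prices its edges under the exponential distribution; since that construction only ever uses $o(1)$-weight edges, the coupling bound of Step~1 converts its uniform cost into an exponential cost at the negligible multiplicative price $1+o(1)$, and matching the $2\,\E\Wok$ term requires only that the $2k$ cheapest incident edges at $s,t$ be usable, which the construction already arranges. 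Combining the two bounds yields \eqref{eq:ExpMain} uniformly for all $k\le n-1$.
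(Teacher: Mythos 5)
There is a genuine gap, and it sits exactly where the paper says the exponential case gets hard. Your Step 1--2 reduction to the uniform case does not reach $k$ of order $n$: your claim that $\E \Wok=\sum_{j=1}^{k}\tfrac1{n-j}=(1+o(1))k/n$ ``for $k$ up to, say, $n-\sqrt n$'' is false. Already for $k=\Theta(n)$ one has $\E \Wok=(1+o(1))\ln\bigl(\tfrac{n}{n-k}\bigr)$, which exceeds $k/n$ by a constant factor (e.g.\ $\ln 2$ vs.\ $1/2$ at $k=n/2$), and correspondingly the incident edges at $s$ and $t$ have weight $\Theta(1)$, so the coupling no longer gives costs agreeing to a factor $1+o(1)$. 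Hence the black-box transfer from \cref{Tmain} only covers $k=o(n)$ (which is how the paper uses it), and the entire range $k=\Theta(n)$ --- not just $k=n-O(\sqrt n)$ --- requires a genuinely exponential analysis, with the order-statistic spacing $\approx 1/(n-k)$ replacing $1/n$ (in the paper this forces $\rr0=\eps_k(n-k)$ and new budgets in \cref{expkmedium,expkbig}).

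More seriously, your lower bound for the hard regime follows ``the same template as in the uniform proof'': a high-probability lower bound on the aggregate $S_k=\sum_{i\le k}X_i$ via the incident-edge order statistics plus a $\ln n/n$ middle term. But that only bounds the running sum; to get a bound on $X_k$ itself one must extract it, and in the uniform case this is done via $tX_k\ge L_k-U_{k-t}$ (\cref{lemma:X_k-lower}). The paper shows explicitly (introduction of \cref{ExpBounds}) that this extraction fails for exponential weights at large $k$: the slack between the upper and lower aggregate bounds grows to $\Theta(\sqrt n)$ while $X_k=\Theta(\ln n)$, so one would need to average over $t=\Omega(n^{1/2+o(1)})$ indices, over which window the order statistics $\Woi$ (behaving like $\ln\tfrac{n}{n-i}$) change by more than a constant factor, destroying the bound. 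The paper's fix is a new exchange argument (\cref{ExpLB}, \cref{remdelta}): \whp every pair of vertices in $G-s-t$ is joined by a path of cost at most $20n^{-1/6}$ edge-disjoint from \emph{all} of $P_1,\dots,P_{n-1}$, whence if $P_i$ ($i\le k$) used an $s$-edge more than $\delta$ dearer than some unused $s$-edge one could swap and contradict optimality; this forces $P_{k+1}$'s incident edges to cost at least $\Wo{k-1}^s+\Wo{k-1}^t-2\delta$, giving the lower bound directly. Your proposal is missing this idea, and the route you propose in its place is the one shown not to work. (Your large-$k$ upper bound is also only gestured at --- ``redo the estimates with exponential order statistics'' --- but that is incompleteness rather than a wrong turn.)
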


\noindent We give the guiding intuition behind the formula \cref{eq:ExpMain} in \cref{sec:paths-intution}.
Note that $\E \Wok =\sum_{i=1}^k \tfrac{1}{n-i}$ in the exponential case (see e.g. \cref{lemma:edge-orderstat}).
In the uniform case, $\E \Wok = k/n$,
so \cref{eq:UnifMain} in \cref{Tmain} can also be written as \cref{eq:ExpMain}.

Rather than finding the $k$ successive cheapest paths, we may alternatively
wish to find the $k$ edge-disjoint paths of \emph{collective} minimum cost.
Equivalently, where every edge of $G$ has capacity~$1$,
we may be interested in the minimum-cost $k$-flow from $s$ to $t$ in $G$.
The following remark shows that this problem leads to essentially the same costs.
(The analogous ``collective'' problem for minimum spanning trees is solved in \cite{FrJo},
and \cite{JaSoMST} shows that for MSTs, the ``successive'' version leads to strictly larger costs.)

\begin{remark}\label{rmk:pathsFk}
	In the complete graph $K_n$ with i.i.d.\ edge weights with distribution $U(0,1)$ or
	exponential with mean 1, the minimum-cost $k$-flow has cost $F_k$ satisfying
	\begin{align} \label{kflow}
	\frac{F_k}{
		\sum_{i=1}^k
        (2 \E W_{(i)}+ \ln n/n)
	} \pto 1
	\end{align}
	uniformly for all $k \leq n-1$.
\end{remark}
As in \cref{Xkbounds}, the statement consists of high-probability upper and lower bounds.
The upper bounds here, for the two models,
follow immediately from the upper bounds of \cref{eq:UnifMain} and \cref{eq:ExpMain}.
The lower bounds follow from the lower bound on $S_k \coloneqq \sum_{i=1}^k X_k$ (see \cref{Skdef})
in \cref{Sklower} and its analogue for the exponential case, as
those bounds hold for any set of $k$ edge-disjoint paths.
(The main work in \cref{lowerbound}, not needed here, is to extract lower bounds on $X_k$
from the lower bounds on $S_k$.)

\begin{remark}\label{rmk:existence}
$P_k$ is always defined for all $k \leq n/2$,
but, at least for $n$ even,
may be undefined for all $k>n/2$.
\end{remark}
\begin{proof}
There are $n-2$ length-2 \st paths.
Any path $P_k$ can destroy (share an edge with) at most two such paths
(since $P_k$ uses just one edge incident to each of $s$ and $t$).
Also, the single-edge path \ste is destroyed only by
the path $P_k$ consisting of just this edge.
So, for $P_1,\ldots,P_{k}$ to destroy all length-1 and length-2 paths requires
$k \geq (n-2)/2+1=n/2$,
so for $k \leq n/2$, certainly path $P_k$ exists.
	
Conversely, a construction described in 1892 by Lucas \cite[pp.~162--164]{Lucas},
which he attributes to Walecki,
shows that a complete graph $K_{2r}$ can be decomposed into
$r$ edge-disjoint Hamilton paths whose $2r$ terminals are all distinct.
For $n$ even, decompose $G=K_n \setminus \set{s,t}$ in this way,
then link $s$ to one ``start'' terminal of each such path and $t$ to the other ``end'' terminal,
giving $(n-2)/2$ edge-disjoint \st paths.
The edge \ste gives another path, for $n/2$ paths in all.
The only edges not used by these paths are a star from $s$ to the Hamilton paths' end terminals,
and another star from $t$ to their start terminals,
and as there are no other unused edges to connect these two stars,
there is no further \st path.
With nonzero probability, the edge weights are such that
$P_1,\ldots,P_{n/2}$ are these $n/2$ paths,
so that $P_{n/2+1}$ does not exist.
\end{proof}

\cref{rmk:existence} implies that, at least for $n$ even,
$\E[X_k]$ is undefined for $k > \expcutoff$.
The following theorem establishes $\E X_k$ for $k \leq \expcutoff$,
and for all $k \leq n-1$,
gives the expectation
conditioned on the (high-probability) event that $P_k$ exists.

\begin{theorem}\label{thm:expectation}
In both the uniform and exponential models,
for $k \leq n-1$, a.a.s.\ $P_k$ exists, and
\begin{align}
\E[X_k \mid P_k \textnormal{ exists}] &=(1+o(1))(2\E \Wok + \ln n / n), \label{EX}
\end{align}
uniformly in $k$.
\end{theorem}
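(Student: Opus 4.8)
\emph{Proof strategy.}
The plan is to upgrade the convergence in probability of \cref{Tmain,Texp} to convergence of the conditional mean, the only extra ingredient being control of the upper tail of $X_k$. Abbreviate $\mu_k \coloneqq 2\E\Wok + \ln n/n$, the target quantity in \cref{EX}, and note $\mu_k \ge \ln n/n$. By \cref{rmk:existence}, $P_k$ exists deterministically for $k \le n/2$, while for $n/2 < k \le n-1$ it exists a.a.s.\ (uniformly in $k$) as part of what is proved for \cref{Tmain,Texp}; in either case $\Pr[P_k\text{ exists}] = 1-o(1)$ uniformly in $k$. We therefore work throughout in the probability space conditioned on $\{P_k\text{ exists}\}$, so that $X_k \ge 0$ is a bona fide random variable and $\E X_k$ means $\E[X_k \mid P_k\text{ exists}]$; since the conditioning event has probability $1-o(1)$ uniformly, the asymptotic estimates of \cref{Tmain,Texp} continue to hold there. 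The lower bound half of \cref{EX} is then immediate and needs no tail bound: from $X_k \ge 0$ and $\Pr[X_k \ge (1-\eps)\mu_k] = 1-o(1)$ uniformly we get $\E X_k \ge (1-\eps)(1-o(1))\mu_k$ for each fixed $\eps > 0$, hence $\E X_k \ge (1-o(1))\mu_k$ uniformly in $k$.

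For the matching upper bound, fix $\eps>0$; it suffices to prove $\E[X_k\,\mathbbm{1}[X_k > (1+\eps)\mu_k]] = o(\mu_k)$ uniformly in $k$, after which $\E X_k \le (1+\eps)\mu_k + o(\mu_k)$ and we let $\eps\downarrow 0$. By the layer-cake identity $\E[Y;Y>a] = a\Pr[Y>a] + \int_a^\infty\Pr[Y>t]\,dt$ with $a = (1+\eps)\mu_k$, followed by the substitution $t = c\mu_k$,
\[
  \E[X_k\,\mathbbm{1}[X_k > (1+\eps)\mu_k]]
  = (1+\eps)\mu_k\,\Pr[X_k > (1+\eps)\mu_k]
    + \mu_k\!\int_{1+\eps}^{\infty}\!\Pr[X_k > c\mu_k]\,dc .
\]
The first term, and also $\mu_k\int_{1+\eps}^{2}\Pr[X_k > c\mu_k]\,dc \le \mu_k\Pr[X_k > (1+\eps)\mu_k]$, are each $o(\mu_k)$ since $\Pr[X_k > (1+\eps)\mu_k] = o(1)$ uniformly by \cref{Tmain,Texp}, so it remains to show $\int_2^\infty\Pr[X_k > c\mu_k]\,dc = o(1)$ uniformly in $k$. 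For this I would extract a quantitative form of the upper bound in \cref{Tmain,Texp}: their proof builds a cheap \st path in the graph left after deleting $P_1,\dots,P_{k-1}$ — a cheap edge at $s$, a cheap edge at $t$, and a cheap middle connection — and the associated failure events (a low-order edge at $s$ or $t$ being too expensive, or the middle connection failing) are governed by order-statistic and short-path estimates with tails that decay polynomially in $n$ with exponent increasing in $c$, of the rough shape $\Pr[X_k > c\mu_k] \le \poly(\ln n, c)\, n^{-\Omega(c-1)}$ over a polynomial range of $c$; beyond that range one uses the crude deterministic bound that every path has weight at most $n-1$ in the uniform model (resp.\ $X_k \le (n-1)\max_e w(e)$, whose maximum exceeds $t$ with probability at most $\binom{n}{2}e^{-t}$, in the exponential model), which makes the remaining tail integral super-polynomially small. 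Integrating over $c\in[2,\infty)$ then yields $o(1)$ uniformly in $k$, and hence $\E X_k \le (1+o(1))\mu_k$ uniformly, which with the previous paragraph gives \cref{EX}.

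The main obstacle is precisely this quantitative tail bound: the qualitative ``$\Pr[X_k > (1+\eps)\mu_k] = o(1)$'' of \cref{Tmain,Texp} does not suffice, and one must reopen their construction to obtain a bound that decays in $c$ fast enough to be integrable \emph{and} does so uniformly in $k$ up to $n-1$. The delicate regime is $k$ near and above $n/2$, where $P_k$ is only barely guaranteed to exist and the residual graph after deleting $P_1,\dots,P_{k-1}$ can be sparse, so that even finding one cheap edge at $s$ or $t$ avoiding those paths may need care; confirming that the per-$k$ failure probabilities in the main proof remain uniform in this range is the crux. An equivalent alternative, should one prefer not to re-enter the main proof, is to bound $\E X_k$ directly by the expected cost of an explicit near-optimal construction — about $\E\Wok$ at each of $s$ and $t$ and $(1+o(1))\ln n/n$ in the middle — plus a correction term for the rare event that $P_1,\dots,P_{k-1}$ block the construction, the correction again estimated via the crude a priori bounds on $X_k$ above; this repackages, but does not remove, the same tail input.
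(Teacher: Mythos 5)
Your overall skeleton (lower bound for free from the in-probability result; upper bound by controlling $X_k$ on the rare event that it exceeds $(1+\eps)\mu_k$) is the same as the paper's, and you have correctly located the crux: a quantitative tail statement that the qualitative $o(1)$ of \cref{Tmain,Texp} does not give. But that crux is exactly where your proposal stops being a proof. You assert a tail of the shape $\Pr[X_k>c\mu_k]\le \poly(\ln n,c)\,n^{-\Omega(c-1)}$ over a polynomial range of $c$, and your layer-cake integral genuinely needs decay in $c$: with your crude a priori bounds ($X_k\le n-1$ in the uniform model, $X_k\le (n-1)\max_e w(e)$ in the exponential one) the integration range for $c$ extends to order $n/\mu_k$ or $n^2/\mu_k$, i.e.\ polynomially many units of $c$, so a $c$-independent bound such as $O(n^{-1.9})$ leaves you with $n^{2}\cdot n^{-1.9}\not\to 0$. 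Moreover the asserted $n^{-\Omega(c-1)}$ decay is not what the paper's machinery yields and is not obviously obtainable from it: the construction's failure probability has a floor of $O(n^{-1.9})$ coming from \cref{LLenBd} (``cheap paths are short''), which does not improve as $c$ grows, so re-running the argument with target $(1+c)w_0$ gives $O(n^{-1.9})$ for every large constant $c$ but nothing decaying in $c$. You would have to prove this stronger tail separately (e.g.\ a parametrised version of \cref{LLenBd}), and you give no argument for it.

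The paper closes this gap differently, and more cheaply, in two pieces. For $k\le n^{4/10}$ it proves \cref{lem:large-eps}: there is one absolute constant $C$ such that for all $\eps>C$ and $k=o(\sqrt n)$, $\Pr[X_k>(1+\eps)\mu_k]=O(n^{-1.9})$; this requires re-entering the small-$k$ construction because several steps there assumed $\eps$ small (the delicate regime for uniformity is in fact \emph{small} $k$, where the per-$k$ failure probability $\exp(-\Theta(\eps s(k)))$ with $s(k)=2k+\ln n$ is only polynomially small with an $\eps$-dependent exponent --- not $k$ near $n/2$, as you suggest). Then, instead of integrating a tail in $c$, on the residual event of probability $O(n^{-1.9})$ it bounds the conditional expectation directly: since $k\le n/2$, by \cref{rmk:existence} some length-2 path survives, so deterministically $X_k\le W_s+W_t$, which is $\le 2$ in the uniform model and, in the exponential model, has conditional expectation $O(\ln^2 n)$ given any event of probability $O(n^{-1.9})$ via the memoryless-property estimate \cref{eq:F}--\cref{ExpCondExp}. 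For $k\ge n^{4/10}$ no new tail bound is needed at all: the failure probabilities already established in the large-$k$ sections are $O(\exp(-n^{0.01}))$, and the crude bound $X_k\le\sum_v W_v$ (at most $n$, resp.\ of conditional expectation $o(n^{1.1})$) is harmless because there $\mu_k\ge 2k/n\ge n^{-3/5}$. So your plan is repairable, but as written it rests on an unproven (and, via the paper's route, unobtainable-as-stated) quantitative tail estimate, and it misses the two devices --- the large-constant lemma \cref{lem:large-eps} and the length-2-path/memoryless bound on the bad event --- that the paper actually uses to finish.
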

For $k \leq \expcutoff$, by \cref{rmk:existence} the conditioning is null,
so it is immediate from \cref{thm:expectation}
that $E[X_k]=(1+o(1))(2\E \Wok + \ln n / n)$.

\subsection{Intuition}\label{sec:paths-intution}
The intuitive picture is that
path $P_k$ should use the $k$th cheapest edges out of $s$ and $t$, whose costs are denoted $\Wok^s$ and $\Wok^t$ respectively.
Then, if we ignore previous paths' use of other edges in $G\setminus \set{s,t}$,
by \cref{eq:svante-X1} the opposite endpoints of these two edges should be connected by a path of cost about $\ln n/n$.
This suggests that $X_k \leq \Wok^s+\Wok^t + \ln n/n$,
and this is our guiding intuition.
Obviously, the path $P_k$ does not have to use the $k$th cheapest edge, its middle section may cost more or less than $\ln n / n$, and as earlier paths use up edges, the costs of these middle sections may rise.
It is true, though, that
$\sum_{i=1}^k X_i \geq \sum_{i=1}^{k-1} \parens{\Wok^s + \Wok^t}$
(summing only to $\km$ on the right-hand side to avoid doubly counting edge \ste),
and we use this in proving the lower bounds on $X_k$
(in \cref{lowerbound} for uniform and \cref{ExpLB} for exponential)
and, more surprisingly, in proving the upper bounds on $X_k$ for large $k$
(in \cref{largekUB} generically, the details treated in
\cref{unifUB,ExpBounds}).

Our upper bounds are obtained by reasoning as follows.
Janson~\cite{Janson123} analyses the shortest \st path,
and shortest-path tree (SP tree or SPT) on $s$, in the randomly edge-weighted graph $G=K_n$,
showing that the cost of $P_1$ is
asymptotically almost surely, almost exactly
$\ln n/n$.
When the path $P_1$ is deleted, this prunes away a root-level branch of the SP tree.
The SP tree is a uniform random tree,
and using known properties of such trees (see for example~\cite{Su})
it is not hard to show that what remains of the SP tree is likely to be large;
capitalising on this we can find an almost equally cheap path $P'_2$.
This line of argument also shows that there remains a cheap path after deleting $P'_2$,
but we need to know what happens when we delete the true second-shortest path $P_2$,
and at this point the argument fails because it
gives no characterisation of $P_2$, only of $P_2'$.
We do know, however, that $P_2$ is cheap (no more expensive than $P'_2$),
and of course uses just one edge incident to each of $s$ and $t$,
and we will show that deleting \emph{any} {edge set} with these properties
(including $P_2$ as a possibility) must still leave a cheap path $P'_3$, and so forth.
This ``adversarial'' deletion argument is developed in \cref{adversary} to prove \cref{Tmain}.

\subsection{Context}

The question fits with a broad research theme on optimisation (and satisfiability) problems
on random structures.
The novel element here is the ``robustness'' aspect of finding cheap structures
even after the cheapest has been removed, and in this
we were  motivated by a recent study by Janson and Sorkin~\cite{JaSoMST} of the same question for successive minimum spanning trees (MSTs), again for $K_n$ with
uniform or exponential random edge weights.
The results for shortest paths and MSTs are dramatically different.
For MSTs, it is a celebrated result of Frieze~\cite{FriezeMST} that as $n \to \infty$ the cost of the MST $T_1$ satisfies $w(T_1) \pto \zeta(3) \eqdef \sum_{k=1}^{\infty} 1/k^3$,
and~\cite{JaSoMST} shows that each subsequent tree's cost has $w(T_k) \pto \gamma_k$
with the $\gamma_k$ strictly increasing
(and $2k-2\sqrt k <\gamma_k<2k+2\sqrt k$).
That is very different from the case here, for paths,
where for $k=o(\ln n)$ we have $X_k$ asymptotically equal to $X_1$.

Further context is given
in the discussion of open problems
in \cref{otherModels}.

\subsection{Edge weight distributions}
As remarked earlier, in many contexts
(including for the length $X_1$ of a shortest path)
the result for any distribution with
positive density at 0 follows immediately from that for the uniform distribution $U(0,1)$, but that is not the case for the successive paths considered here.

\begin{remark}\label{remark:blackbox}
Janson proves the $X_1$ case in the exponential model but provides standard
``black-box'' reasoning that it
holds also for the uniform distribution, for any distribution with density 1 at 0
(i.e., with cumulative distribution function (CDF) $\Pr(X \leq x) = x+o(x)$ for $x \downto 0$),
and, after simple rescaling, for any distribution with positive density at 0.
Simply, if there is a path of cost $o(1)$ in some such model,
each edge $w$ must also cost $o(1)$, and,
coupling with the uniform distribution by replacing $w$ with $w'=F(w)$,
with $F$ the CDF,
$w' \leq (1+o(1)) w$,
and thus the same path is similarly cheap in the uniform model.
By the same token, if a path is cheap in any model, the same path has
asymptotically the same cost in any other model,
and thus the cheapest paths have asymptotically the same cost.
\end{remark}

\begin{remark}\label{remark:openbox}
In our setting this argument does not apply:
to find path $P_k$ we must know the nature of the $k-1$ previous paths;
their costs are not enough.
For $k=o(n)$, however, the standard argument applies within our proofs,
since the proofs rely only on edges of cost $o(1)$.
However, for larger $k$ there are genuine difficulties.
Our argument for the exponential case, in \cref{ExpBounds},
largely parallels that for uniform but requires new calculations
for the upper bound,
and one new idea for the lower bound (in \cref{ExpLB}).
It is not clear for what other edge-weight distributions
(even those with density 1 at 0)
\cref{eq:ExpMain} will hold.
\end{remark}

\section{Open problems}
\subsection{Poisson multigraph model}

The issue of possible non-existence of paths $P_k$ for $k>n/2$
(see \cref{rmk:existence})
is obviated if, as in \cite{JaSoMST}, we
work in a Poisson multigraph model.
Here, each pair of vertices $\set{u,v}$ of $K_n$ is joined by
infinitely many edges, whose weights are drawn from a Poisson process of rate 1
(so that the cheapest $\set{u,v}$ edge has exponentially distributed cost of mean 1).
By construction, in this model every \st path is always available (possibly at a higher cost).

\begin{conjecture}
In the Poisson multigraph model,
$ \frac{X_k}{2k/n + \ln n / n} \pto 1 $ uniformly for all $k \leq n-1$,
and $ \frac{\E X_k}{2k/n + \ln n / n} \to 1 $ for all $k \leq n-1$.
\end{conjecture}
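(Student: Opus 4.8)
We sketch a possible route to the conjecture. The plan is to transfer \cref{Tmain,Texp,thm:expectation} from $K_n$ to the Poisson multigraph model, exploiting that this model differs from the exponential $K_n$ model only in that each pair $\set{u,v}$ carries a whole rate-$1$ Poisson process of parallel edge weights rather than a single $\Exp(1)$ weight. Coupling the cheapest parallel edge of each pair with the corresponding edge of the exponential $K_n$ model exhibits the Poisson model as $K_n$ together with extra, more expensive, parallel edges; morally these can only help in finding cheap \st paths, and by construction they make every $P_k$ exist in every realisation, so the obstruction of \cref{rmk:existence} disappears and $\E X_k$ becomes meaningful for all $k\le n-1$. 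Note that here the $k$th cheapest edge out of a fixed vertex --- the $k$th point of the rate-$(n-1)$ Poisson process got by superposing the processes on the $n-1$ incident pairs --- has cost concentrated around $\tfrac k{n-1}=(1+o(1))\tfrac kn$, exactly the quantity denoted $\E\Wok$ in the uniform model, so the target $2k/n+\ln n/n$ is again $2\E\Wok+\ln n/n$. Two statements have to be established: (i) the high-probability two-sided bounds $1-\eps\le X_k/(2k/n+\ln n/n)\le 1+\eps$ uniformly for $k\le n-1$, which yields the convergence in probability; and (ii) a tail bound on $X_k$ sharp enough to upgrade (i) to the statement about $\E X_k$.

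For (i), the first step is a concentration lemma: \whp, for every pair $\set{u,v}$ only $(\log n)^{O(1)}$ of its parallel edges have cost $O(1)$, and over the whole process $P_1,\dots,P_{n-1}$ only such edges are ever used. (At stage $k$ at most $\km$ parallel edges of any pair have been used so far; a union bound should exclude the atypical configurations in which many short paths pile onto one pair.) On that event one may pass to the bounded-multiplicity submultigraph obtained by keeping only the cheapest $(\log n)^{O(1)}$ parallel edges of each pair; this is precisely the sort of graph for which Janson's shortest-path-tree analysis, the adversarial deletion argument of \cref{adversary}, and the large-$k$ upper bound of \cref{largekUB} (with the calculations of \cref{unifUB,ExpBounds} redone for the rate-$(n-1)$ order statistics) are designed, so that the upper bound $X_k\le(1+\eps)(2k/n+\ln n/n)$ should carry over. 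For the lower bound one must re-run the argument of \cref{lowerbound} --- the delicate part already in $K_n$ --- starting from the observation that $P_1,\dots,P_k$ still use $k$ distinct edges at $s$ (parallel edges of a pair counting separately) whose weights dominate the first $k$ points of the rate-$(n-1)$ process, so that $\sum_{i=1}^kX_i\ge\sum_{i=1}^{\km}(\Woi^s+\Woi^t)$ as in \cref{sec:paths-intution}, together with the $\ln n/n$ lower bound on middle sections. A convenient shortcut is to couple the Poisson model directly with the exponential $K_n$ model for $k\le\expcutoff$, where the latter's paths surely exist, leaving $\expcutoff<k\le n-1$ as the genuinely new range.

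For (ii), note first that $\E X_k$ is finite regardless of anything: at most $\km$ parallel edges of the pair \ste itself are used by $P_1,\dots,P_{\km}$, so $X_k$ is \emph{deterministically} at most the $k$th smallest weight on that pair, a $\Gamma(k,1)$ variable of mean $k\le n-1$. Given any bound $\Pr\bigl(X_k>t(2k/n+\ln n/n)\bigr)=n^{-\omega(1)}$, uniform in $k$, this deterministic domination together with Cauchy--Schwarz forces the atypical event to contribute $o(2k/n+\ln n/n)$ to $\E X_k$, so $\E X_k=(1+o(1))(2k/n+\ln n/n)$ follows from (i) provided the failure probabilities there can be taken super-polynomially small --- as such arguments usually allow, though this must be checked. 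The required tail bound is easy for $k\le(1-\d)\expcutoff$: at least $n-2-2(\km)=\Omega(n)$ middle vertices $z$ have both $\set{s,z}$ and $\set{z,t}$ untouched by $P_1,\dots,P_{\km}$, so $X_k$ is at most a minimum of $\Omega(n)$ essentially independent two-hop costs of distribution $\Gamma(2,1)$, whose upper tail is $\exp(-\Omega(nt))$; for $\expcutoff<k\le n-2$ a cruder three-hop or backup-edge construction still bounds $X_k$ by an $O(1)$-mean variable with exponential tail. I expect the main obstacle to be the large-$k$ regime throughout: faithfully adapting both the upper bound of \cref{largekUB} and the lower bound of \cref{lowerbound} to the multigraph, where one must track how parallel (backup) edges interact with the residual-graph routing used by the deleted paths, and, for the expectation, promoting the large-$k$ upper bound from a high-probability statement to one controlling the \emph{upper tail} of $X_k$ with super-polynomially small failure probability. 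With all of this in place both clauses of the conjecture follow, and the corresponding statement for the minimum-cost $k$-flow $F_k$ (cf.\ \cref{rmk:pathsFk}) follows as before.
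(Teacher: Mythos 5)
This statement is a conjecture that the paper explicitly leaves open --- there is no proof in the paper to compare against --- so the only question is whether your sketch itself settles it, and it does not: it is a research plan whose load-bearing steps are exactly the ones deferred with ``should carry over'', ``must be checked'' and ``I expect the main obstacle''. The most serious of these is the large-$k$ regime. The constructions and bookkeeping of \cref{largekUB,unifUB,ExpBounds} rely on the cheapest $\kk$ edges at $s$ going to \emph{distinct} neighbours: after the adversary deletes $k$ of them, $V'_s$ has exactly $\rr0$ elements and the fanout from a middle vertex is $\Bi(\rr0,\epsk)$. In the Poisson model the cheapest $k$ edges at $s$ are the first $k$ points of the superposed rate-$(n-1)$ process, and for $k=\Theta(n)$ a constant fraction of pairs carry two or more of these points, so the surviving $\rr0$ edges may reach far fewer than $\rr0$ distinct vertices; the robustness calculations therefore do not transfer as stated and must be redone, which is precisely the part you wave at. Similarly, the lower-bound inequality $S_k\ge\sum_{i=1}^{\km}\parens{\Woi^s+\Woi^t}$ needs re-derivation, since now many of the first $k$ paths (not just one) can be single parallel \ste edges, and recall that even without parallel edges the paper's exponential lower bound required a new idea (\cref{ExpLB}); nothing in your sketch supplies the multigraph analogue.

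The mechanism you propose for the expectation clause also cannot work as stated for small $k$: you ask for a uniform tail bound $\Pr\parens{X_k> t(2k/n+\ln n/n)}=n^{-\omega(1)}$, but already for $k=1$ the cheapest edge at $s$ is the first point of a rate-$(n-1)$ Poisson process, so $\Pr(X_1>C\ln n/n)\ge e^{-(n-1)C\ln n/n}=n^{-(1+o(1))C}$, which is polynomially --- never super-polynomially --- small for any constant $C$; a minor related slip is that the two-hop tail is $\exp(-\Omega(nt^2))$, not $\exp(-\Omega(nt))$, since $\Pr(\Gamma(2,1)\le t)\sim t^2/2$. The workable route is the one the paper uses in \cref{sec:expectation}: prove a large-constant analogue of \cref{lem:large-eps} in the multigraph to get a failure probability with a sufficiently large polynomial exponent, and pair it with a crude bound on the bad event (your observation that $X_k$ is deterministically dominated by the $k$th parallel \ste edge, a $\Gamma(k,1)$ variable, is correct and would serve well here), rather than Cauchy--Schwarz against a super-polynomial tail that does not exist. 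Until the large-$k$ upper and lower bounds and this expectation step are actually carried out in the multigraph, the conjecture remains open.
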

\noindent Actually, in this model there is no need to stop at $k=n-1$,
but it is not clear how far out we can go (especially preserving uniform convergence).

\subsection{Other models} \label{otherModels}
Most narrowly,
it would be interesting to characterise successive shortest paths that are vertex-disjoint rather than edge-disjoint,
and (in the style of \Cref{rmk:pathsFk} for edge-disjoint paths)
the $k$ vertex-disjoint paths of collective minimum cost.
In this model, guessing that path lengths stay around $\log n$,
we would expect $P_k$ to be defined up to $k$ about $n / \log n$.

More broadly, it would be interesting to explore
different edge-weight distributions, different structures,
and different graphs.

As noted earlier, we have results for
uniformly and exponentially distributed edge weights,
but not for arbitrary distributions.
As mentioned, results
for the single shortest path
follow by standard arguments for any distribution
with positive density near~0.
For a distribution with density tending to 0 or $\infty$ at 0,
shortest paths were studied in \cite{BH2012}.
In particular, they consider the case when edge weights are \iid and have the same distribution as $Z^{p}$, where $Z \sim \Exp(1)$ and $p>0$ is a fixed parameter;
in this setting, the shortest path has length $p \ln n$
and its cost is $\ln n/n^p$ times a $p$-dependent constant.
A variant where the edge-weight distribution may depend on $n$ is studied in \cite{Eckhoff}.

To what distributions does \cref{Texp} extend?
Restricting to distributions with positive density near~0,
the arguments in \cref{ExpBounds} should immediately extend for all $k=o(n)$.
For larger $k$, the ``middle'' of each path should remain short,
so the issue is the edges incident on $s$ and $t$ in $P_k$.
Certainly \cref{eq:ExpMain} will fail if the order statistics
of edges incident to $s$ are not concentrated,
for example if the edge distribution is
a mixture of $U(0,1)$ and an atom at 2
or (for a continuous example)
a mixture of $U(0,1)$ and the Pareto distribution with CDF $1-1/x$ for $x\geq 1$.
It might be true that \cref{eq:ExpMain} holds more generally if
the expectation $2\E \Wok$ is replaced by $\Wok^s+\Wok^t$.
However, to obtain the needed lower bound for
the exponential model (see \cref{ExpBounds}),
we had to address the fact that the $k$th path does not
necessarily use the edges of cost $\Wok^s$ and $\Wok^t$;
we also needed exponential-specific calculations for the upper bound.

One could explore other structural models.
Minimum spanning trees (MSTs) have already been explored in
\cite{JaSoMST} for the successive version
and in \cite{FrJo} for the collective version.
But for many other models
the single cheapest structure is well studied
but the successive and collective extensions have not been explored:
this includes
perfect matchings in complete bipartite graphs $K_{n,n}$
\cite{AldousAP,WastlundAP},
perfect matchings in complete graphs $K_n$
\cite{WastlundPM},
and Hamilton cycles (i.e., the Travelling Salesman Problem) in $K_n$
\cite{Was2010}.

One could also consider graphs other than complete graphs,
in the style of studies of the MST in a random regular graph
\cite{BFM1998},
and of first-passage percolation
in \ER random graphs \cite{Bhamidi}
and hypercubes \cite{Martinsson}.

\section{Upper bound for small \tp{$k$}{k}}\label{sec:k-small}

In this section we prove the upper bound of \cref{Tmain} for all $k=o(\sqrt n)$;
larger values are treated in the next section.

As discussed in the introduction, we can characterise the cheapest path $P_1$ and subsequent
paths that are \emph{cheap} but not necessarily \emph{cheapest},
putting us at a loss to characterise what remains on deletion of a subsequent \emph{cheapest} path.
We address this in this section.
Given $k$, we show a construction of a subgraph $\RR=\Rk$
of $G$
designed so that, as we will show in turn,
its \st paths are all cheap,
and no deletion of edges from $\RR$ subject to certain constraints can destroy all these paths.
We show that the union of the $k$ shortest paths satisfies these constraints,
so that there remains a cheap \st path in $\RR$ and thus in $G$,
and use this to prove \cref{Tmain}.

\begin{figure}[h]
\centering
\includegraphics[width=0.7\linewidth]{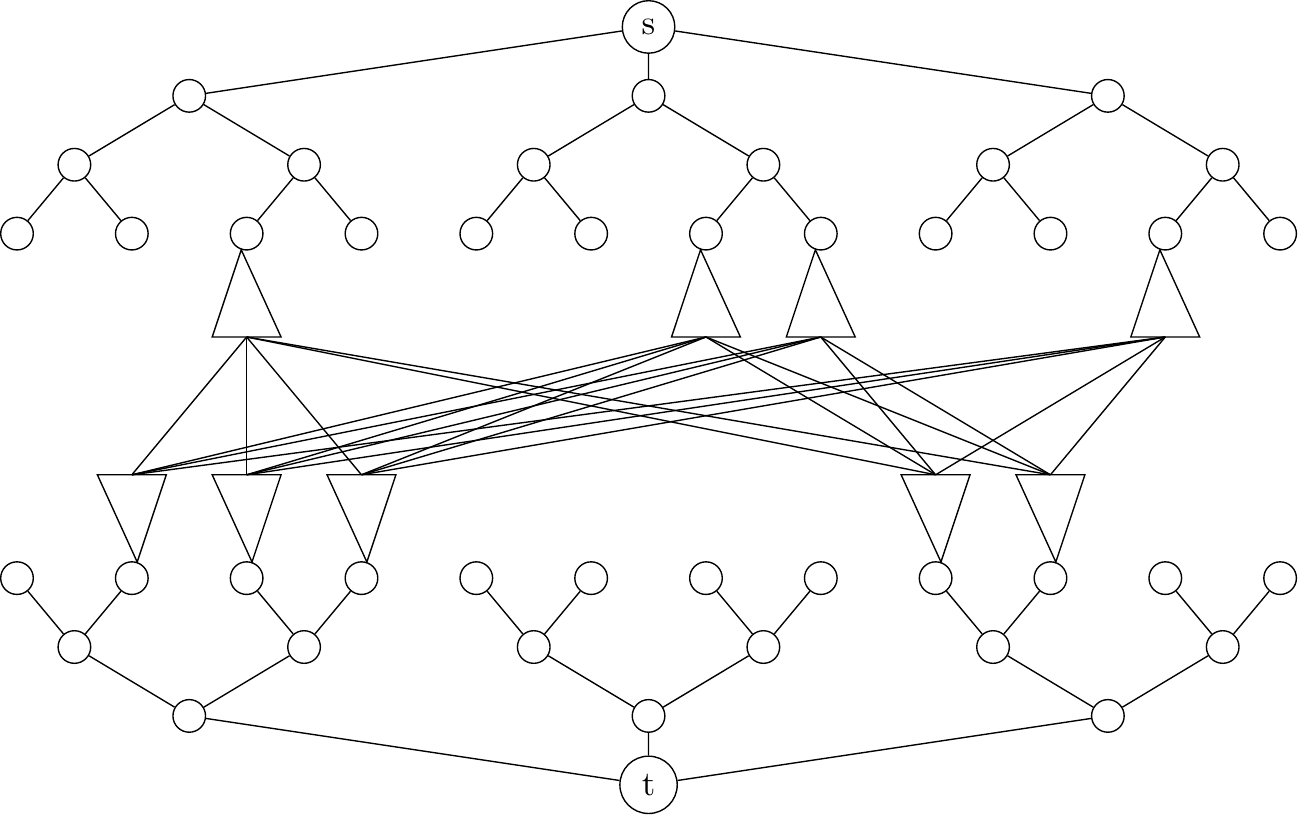}
\caption{Cartoon of a robust subgraph $\RR$ of $G$,
showing the vertices $s$ and $t$, their respective
structures $R_s$ and $R_t$
including shortest-path trees represented by triangles
(some ``failed'' and thus not shown),
and the cheap edges connecting triangles in $R_s$ and $R_t$.
Vertices $s$ and $t$ have down-degree (number of children) $r_0$,
and vertices at levels 1 and 2 (in $R_s$ and $R_t$) have down-degrees $r_1$ and $r_2$ respectively.
}\label{Frobust}
\end{figure}

Specifically, we will define a structure $\RR$,
sketched in \cref{Frobust},
that has many cheap and spread-out paths between $s$ and $t$,
within which we will always find a cheap path.
A crucial point is that
each step of the construction occurs in a complete induced subgraph of $G$
of size $n-o(n)$ with all edges unconditioned.

We will show,
assuming that
\begin{align}\label{indHyp}
X_i \leq \ope \parens { \frac{2i}n+\frac{\ln n}n }
\end{align}
for all $i\leq k$, that the same holds for $i=\kp$.
We will do so by showing that after deleting $k$ paths, each of cost $\leq \ope (2k/n+\ln n/n)$ from $G$,
some or all of whose edges may lie in $R$,
there remains a path in $R$ satisfying the same cost bound, and so this must also be true of $P_\kp$.

\medskip

Consistent with this approach,
and because to prove convergence in probability it suffices to
consider an arbitrarily small, fixed $\eps$
(see around \cref{Xkbounds}),
throughout this section we assume that $\eps>0$ is fixed.
Thus, in the $n \to \infty$ limit implicit throughout,
\begin{align}
 \eps=\Theta(1) ,  \label{epsconst}
\end{align}
and $\eps$ (and functions of $\eps$)
may be absorbed into the constants implicit
in any Landau-notation expression.

\begin{remark} \label{warning}
Most of the calculations below hold for any $\eps>0$,
but a few (\cref{Bheavy} and \cref{pathlength} for example)
hold only for $\eps$ sufficiently small.
This is not restrictive here, in proving convergence in probability,
but to characterise expectation, \cref{expSmallk}
requires $\eps$ to be a large constant
(to assure sufficiently small failure probabilities).
The proof of \cref{lem:large-eps} addresses the changes needed.
\end{remark}

Before going into detail let us sketch the construction of $R$.
We first build up a tree $R_s$ on $s$, starting from $s$ at level 0,
the opposite endpoints of edges out of level $i$ forming level $i+1$.
We will always choose ``cheap'' edges, but not always the cheapest ones, as explained later.
From $s$ we will choose $\kk$ cheap edges;
from each of these $\kk$ level-1 vertices we choose $\rr1$ cheap edges;
from each of the $(\kk)\rr1$ level-2 vertices we choose $\rr2$ cheap edges;
and on each of the $(\kk)\rr1 \rr2$ level-3 vertices we construct a shortest-path tree comprising $\dd$ vertices.
We do a similar construction on $t$ to form $R_t$.
Finally, we link $R_s$ and $R_t$ using cheap edges between their shortest-path trees.
The values of the parameters $\rr0$, $\rr1$, $\rr2$ and $d$ are given in
~\cref{k0},~\cref{k1},~\cref{k2} and~\cref{ddef},
and it is confirmed in \cref{sec:Rsize} that
the construction uses only a small fraction of $G$'s vertices,
\begin{align}\label{n'}
\card{V(R)} &= O((\kk) \rr1 \rr2 d) = o(n) ,
\end{align}
a fact we rely on in the construction.

We will repeatedly use the following Chernoff bound, which in fact holds under more general conditions;
see for example~\cite[Theorem 1, eq.~(4)]{Janson2001}.
\begin{lemma}\label{lemma:BinDev}
Let $X \sim \Bi(n,p)$  be a binomial random variable with mean $\la = np$.
Then for any $\eps>0$, $\Pr(X< (1-\eps)\la) \leq \exp(-\eps^2 \la/2)$.
\end{lemma}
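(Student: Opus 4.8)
The plan is to prove this by the standard exponential-moment (Chernoff) method, since the bound is not special to our setting; indeed \cite[Theorem 1, eq.~(4)]{Janson2001} gives the stronger statement for sums of independent indicator variables, and I would simply cite it, recording here the short self-contained argument for the binomial case.

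First I would write $X=\sum_{i=1}^n X_i$ with the $X_i$ independent $\Bern(p)$ variables and, for any $t>0$, apply Markov's inequality to $e^{-tX}$:
\[
  \Pr(X<(1-\eps)\la)=\Pr\parens{e^{-tX}>e^{-t(1-\eps)\la}}\leq e^{t(1-\eps)\la}\,\E e^{-tX}.
\]
Next I would compute $\E e^{-tX}=(1-p+pe^{-t})^n=(1+p(e^{-t}-1))^n\leq \exp(\la(e^{-t}-1))$ using $1+x\leq e^x$, which gives
\[
  \Pr(X<(1-\eps)\la)\leq \exp\parens{\la\,(t(1-\eps)+e^{-t}-1)}.
\]

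Then I would optimise the exponent over $t>0$. For $\eps\in(0,1)$ the choice $t=-\ln(1-\eps)=\ln\tfrac{1}{1-\eps}>0$ minimises $t(1-\eps)+e^{-t}-1$ and yields
\[
  \Pr(X<(1-\eps)\la)\leq \exp\parens{-\la\,g(\eps)},\qquad g(\eps)\coloneqq \eps+(1-\eps)\ln(1-\eps);
\]
the case $\eps\geq 1$ is trivial, since then $(1-\eps)\la\leq 0\leq X$ and the left side is $0$. Finally I would verify the elementary inequality $g(\eps)\geq \eps^2/2$ on $[0,1)$: one checks $g(0)=0$, $g'(\eps)=-\ln(1-\eps)$ so $g'(0)=0$, and $g''(\eps)=\tfrac{1}{1-\eps}\geq 1$, whence by the integral form of Taylor's theorem $g(\eps)=\int_0^\eps(\eps-u)g''(u)\,du\geq \int_0^\eps(\eps-u)\,du=\eps^2/2$. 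Combining the last two displays gives $\Pr(X<(1-\eps)\la)\leq \exp(-\eps^2\la/2)$, as claimed.

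There is no genuine obstacle here; the mildly delicate points are the convexity estimate $g(\eps)\geq\eps^2/2$ and the trivial boundary case $\eps\geq 1$, both handled above. I would also note that the identical computation applies when the $X_i$ are independent but not identically distributed (with $\la$ replaced by $\sum_i p_i$), which is the generality in which we invoke the bound elsewhere.
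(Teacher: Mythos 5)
Your proposal is correct, and it matches the paper's treatment: the paper gives no proof of this lemma at all, simply citing \cite[Theorem 1, eq.~(4)]{Janson2001} for the (more general) bound, exactly as you suggest doing. Your self-contained Chernoff computation is also accurate in its details — the optimisation $t=-\ln(1-\eps)$, the convexity estimate $g(\eps)=\eps+(1-\eps)\ln(1-\eps)\geq \eps^2/2$ via $g''(\eps)=\tfrac{1}{1-\eps}\geq 1$, and the trivial case $\eps\geq 1$ are all handled correctly, so it serves as a valid stand-in for the citation.
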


\subsection{Cheap paths are short}
We show that, \whp, every cheap path in $G$ is also short.
The following lemma asserts the contrapositive.
The result is used in~\cref{Bany} to restrict the number of edges the adversary can delete.

\begin{lemma}\label{LLenBd}
In both the uniform and exponential models,
with probability $1-\OO{n^{-1.9}}$,
simultaneously for all $l$ with $\ln n \leq l < n$,
every \st path of length $l$ has cost
$\geq l/(19 n)$.
\end{lemma}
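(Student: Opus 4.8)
The plan is to prove the contrapositive via a first-moment (union bound) argument over all short path-skeletons. Fix a length $l$ with $\ln n \le l < n$. The goal is to show that, with high probability, no \st path of length $l$ has cost $< l/(19n)$. A path of length $l$ uses $l$ distinct edges among $\binom{n}{2}$ available ones; the number of \st paths of length $l$ is at most $n^{l-1}$ (choosing the $l-1$ internal vertices in order). For a fixed such path, its cost is a sum of $l$ i.i.d.\ edge weights, each stochastically at least a $U(0,1)$ variable in both models (in the exponential model, $\Exp(1) \succeq U(0,1)$ is false pointwise, but near $0$ the exponential density is $\ge$ that of uniform, and a cheap path only uses small-weight edges — so it suffices to bound the probability that $l$ i.i.d.\ weights sum to less than $l/(19n)$, which I will do directly for each model). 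So the first step is a tail bound: $\Pr\!\big(\sum_{i=1}^l W_i < l/(19n)\big)$ where the $W_i$ are i.i.d.\ $U(0,1)$ (or $\Exp(1)$).

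The second step is to estimate that small-sum probability. For i.i.d.\ uniforms, $\Pr(\sum_{i=1}^l W_i \le a) = a^l/l!$ for $a \le 1$ (volume of a simplex), so with $a = l/(19n)$ this is $(l/(19n))^l / l! \le \big(el/(19nl)\big)^l = (e/(19n))^l$ using $l! \ge (l/e)^l$. For the exponential model the sum is $\Gamma(l,1)$-distributed and $\Pr(\sum \le a) = e^{-a}\sum_{j\ge l} a^j/j! \le a^l/l! \cdot \text{(something }O(1)\text{)}$, giving the same order $(O(1)/n)^l$. Either way, the per-path failure probability is at most $(C/n)^l$ for an absolute constant $C$ (say $C = e$, crudely $C \le 3$). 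Then the union bound over the $\le n^{l-1}$ paths of length $l$ gives failure probability at most $n^{l-1} (C/n)^l = C^l/n$. Summing over $\ln n \le l < n$: $\sum_{l \ge \ln n} C^l/n$ — wait, that diverges, so I need a better per-path bound. Refine: use $l!  \ge (l/e)^l$ together with the factor $(l/(19n))^l$ more carefully, i.e.\ per-path probability $\le (el/(19nl))^l \cdot$ — actually the right computation is $(l/(19n))^l/l! \le (l/(19n))^l (e/l)^l = (e/(19n))^l$, and union bound gives $n^{l-1}(e/(19n))^l = (e/19)^l / n \le (1/2)^l/n$ once $e/19 < 1/2$. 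Now $\sum_{l \ge \ln n} (1/2)^l / n = O(2^{-\ln n}/n) = O(n^{-1-\ln 2}) = O(n^{-1.6})$; being slightly more generous with constants (or replacing $19$ by a value that makes $e/(\text{const}) \le 1/4$, which $19$ already does since $e/19 < 0.15$) comfortably yields the claimed $O(n^{-1.9})$. So the final step is just assembling the union bound over both $l$ and the path-skeletons and checking the geometric sum.

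The main obstacle — really the only subtle point — is the exponential model: one must be careful that summing i.i.d.\ $\Exp(1)$ weights does not have a heavier lower tail than the uniform case. It does not: $\Pr(\Exp(1) \le x) = 1 - e^{-x} \le x$, so the exponential weights are stochastically \emph{larger} than $U(0,1)$, hence $\Pr(\sum_{i=1}^l \Exp_i \le a) \le \Pr(\sum_{i=1}^l U_i \le a) = a^l/l!$, and the uniform bound transfers verbatim. (Alternatively, the Gamma CDF bound $\Pr(\Gamma(l,1) \le a) \le a^l/l!$ holds directly.) Everything else is routine: counting path-skeletons by $n^{l-1}$, the simplex-volume / Gamma-tail identity, the factorial bound $l! \ge (l/e)^l$, and a geometric sum. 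I would write the proof in exactly that order — tail bound per path, union over skeletons of length $l$, then union over $l$ — and note at the start that the exponential case reduces to the uniform one by stochastic domination.
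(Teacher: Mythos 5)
Your proposal is correct and follows essentially the same route as the paper's proof: the simplex-volume (Irwin--Hall) bound $a^l/l!$ with Stirling's lower bound on $l!$, a union bound over path skeletons and over lengths $l \ge \ln n$, and the exponential case handled by the stochastic domination $\Pr(\Exp(1)\le x)=1-e^{-x}\le x$. The only cosmetic differences are your count of $n^{l-1}$ rather than $n^{l}$ paths and the brief detour through the constant $1/2$ before correctly noting that $e/19<0.15$ gives the stated $\OO{n^{-1.9}}$.
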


\begin{proof}
We start with the uniform distribution.
Here, with $X=\sum_{i=1}^{l} X_i$, $X_i \sim U(0,1)$ \iid,
$X$ has the Irwin-Hall distribution and it is a standard result that
$\Pr(X \leq a) \leq a^l/l!$
(see for example \cite[eq.\ 8]{SorkinClique}).
Recall that Stirling's approximation is also a lower bound.
Thus,
\begin{align*}
  \Prr{X \leq \frac{l}{19n}}
   &\leq \frac{(l/19n)^l}{l!}
   \leq \frac{(l/19n)^l}{\sqrt{2\pi l} \, (l/e)^l}
    < \parens{\frac{e}{19n}}^l.
\end{align*}
The cost of a fixed path of length $l$ has the same law as $X$.
Over the $\leq n^l$ choices for such a path, the number $M_l$ of ``cheap paths'' (of cost $<l/(19n)$) satisfies (by Markov's inequality)
\[ \Pr(M_l>0)
 \leq \E M_l
 \leq n^l \Prr{X \leq \frac{l}{19n}}
 \leq n^l \parens{\frac{e}{19n}}^l
 = \parens{\frac e{19}}^l . \]
Summing over $l \geq \ln n$, the probability that there is a cheap
path of any such length
is $\OO{{(e/19)}^{\ln n}} = \OO{n^{-1.9}}$.

Since an $\Exp(1)$ random weight $X'$ can be obtained from a $U(0,1)$ r.v.\ $X$
by setting $X' = -\ln(1-X)>X$,
the exponential weight stochastically dominates the uniform,
so the result for uniform immediately implies that for exponential.
\end{proof}

\subsection{Adversarial edge deletions}\label{adversary}
As noted in the introduction, we introduce an edge-deleting adversary
whose powers allow it to delete the paths $P_1,\ldots,P_k$,
but which is more easily characterised than those paths are.
We now specify what the adversary is permitted to do.

Let
\begin{align}\label{sdef}
 s = s(k) & \coloneqq 2k+\ln n .
\end{align}
(From context it should be easy to distinguish this use of $s$
from that as the source of an \st path.)
Let $\wo$ be the ``target cost'' of a path, namely
\begin{align}\label{w0def}
  \wo = \wo(k) & \coloneqq \frac s n = \frac{2k}n+\frac{\ln n}n .
\end{align}
Define a ``heavy'' edge to be one of cost
\begin{align}\label{heavydef}
  & \geq \tfrac1{11} \eps \wo.
\end{align}

Assuming that
each of $P_1,\ldots,P_k$
has weight $\leq \ope w_0$,
the \emph{number of heavy edges} in $P_1 \cup \cdots \cup P_k$ is at most
\begin{align}
  \frac{k \ope \wo}{\tfrac1{11} \eps \wo}
   < \frac{12k}{\eps} 
   < \frac{12 s}{\eps} . \label{Bheavy}
\end{align}
Also,
modulo the one-time failure probability $\OO{n^{-1.9}}$ from \cref{LLenBd},
by that lemma each path has length at most
\begin{align}
 \ope \wo \cdot 19n < 20 s . \label{pathlength}  
\end{align}
Thus, the length of all $k$ paths taken together
(i.e., the number of edges in $P_1 \cup \dots \cup P_k$)
is at most
\begin{align}
  20 k s < 10 s^2 . \label{Bany}
\end{align}
And of course the $k$ paths include
\begin{align} \label{Bincident}
 &\text{exactly $k$ edges incident on each of $s$ and $t$.}
\end{align}

Subject to these assumptions
--- that each of $P_1,\ldots,P_k$ has weight $\leq \ope w_0$
and that the high-probability conclusion of \cref{LLenBd} holds ---
$P_1 \cup \cdots \cup P_k$ satisfies all three of the constraints
\cref{Bheavy}, \cref{Bany}, and \cref{Bincident}
on heavy edges, all edges, and ``incident'' edges.
An adversary who can delete any edge set subject to these constraints
is able to delete $P_1 \cup \cdots \cup P_k$, which is all we require.
However, to simplify analysis we will give the adversary even more power.

At the root of $R$ we will allow the adversary to delete edges subject only to \cref{Bincident};
at level 1, additional edges subject only to
the ``heavy-edge budget'' \cref{Bheavy};
and at levels 2 and 3 and for ``middle'' edges,
additional edges subject only to
the ``edge-count budget'' \cref{Bany}.

\medskip

We will show how to choose the parameters of $R$
so that every \st path in $R$ has cost $\leq \ope \wo$, and
so that $R$ is ``robust'':
after the adversarial deletions,
at least one path remains.
Specifically, we will arrange that there remains a path in which the
``root'' edge incident to $s$ costs $\leq \tfrac k n + \frac19 \eps w_0$,
the edge out of level 1 is heavy but has cost $\leq \frac19 \eps \wo$,
the edge out of level 2 may be light or heavy and also has cost $\leq \frac19 \eps \wo$,
the path through the SP tree has total cost $\leq \frac12 \tfrac {\ln n} n + \frac19 \eps \wo$,
the central edge joining this to the opposite SP tree adds cost $\leq \frac19 \eps \wo$,
and the continuation of this path to $t$ has the symmetrical properties.
It is immediate that such a path has total cost $\leq (2k+\ln n)/n + 9 \cdot \tfrac19 \eps \wo = \ope \wo$.
(But see \cref{Rpathcost} for confirmation, after the construction is detailed.)

\medskip

\subsection{Level 0, cheapest edges}\label{level0}
On $s$, add to $R$ the $\kk$ edges of lowest cost, excluding $\set{s,t}$ from consideration,
with
\begin{align}\label{k0}
\rr0 = \ceil{\tfrac1{10} \eps s} = \Theta(s) .
\end{align}
Consider this step a \emph{failure} if any selected edge has cost greater than $\tfrac k n+\frac19 \eps \wo$.
There are $n'=n-2=(1-o(1))n$ edges under consideration, with weights \iid $U(0,1)$,
and failure occurs iff the number $X$ of edges with weights in the interval $[0, \tfrac k n+\frac19 \eps \wo]$
is smaller than $\kk$.
Note that $X \sim \Bi(n', \tfrac k n+\frac19 \eps \wo)$, thus $\E X = (1-o(1)) \, (k+\frac19 \eps s)$,
and failure means that $X <\kk$, i.e., that
\begin{align*}
\frac{X}{\E X}
 &< (1+o(1)) \, \frac{\kk}{k+\frac19 \eps s}
 = (1+o(1)) \, \frac{k+\frac1{10} \eps s}{k+\frac19 \eps s} ,
\intertext{which by $s>2k$ is}
 &< (1+o(1)) \, \frac{k+\frac1{10} \eps \cdot 2k}{k+\frac19 \eps \cdot 2k}
 = (1+o(1)) \, \frac{(1+\frac2{10} \eps)k}{(1+\frac29 \eps)k}
 < 1-\tfrac1{50}\eps 
 = 1-\Omega(\eps) .
\end{align*}
By \cref{lemma:BinDev}, then, the probability of failure is
\begin{align}
\Pr(X < (1-\Omega(\eps)) \E X)
 &\leq \exp(-\Omega(\eps^2) \E X/2)
 \leq \exp(-\Omega(\eps^2 \cdot \eps s))
 \leq \exp(-\Theta(s)) , \label{level0fail}
\end{align}
the final expression using that $\eps$ is constant (see \cref{epsconst}).

So, modulo the given failure probability, every selected edge incident on $s$ has cost $\leq \tfrac k n+\frac19 \eps \wo$,
and after the adversarial deletion of $k$ of these edges, $\rr0$ remain.
The selection of edges conditions the costs of the other edges incident on $s$,
but none will play any role in the analysis.

The purpose of the next two levels is to expand the number of edges
to the point where the adversary cannot delete all of them,
because of the heavy-edge budget \cref{Bheavy} for edges out of level 1, and the edge-count budget \cref{Bany} for edges out of level 2 and beyond.
At the same time, we try to minimise the number of vertices introduced into the construction
so that it will remain $o(n)$ for as large a $k$ as possible.

\subsection{Level 1, cheapest heavy edges}\label{level1}
From each neighbour $v$ of $s$ along the edges just added,
add to $R$ the
\begin{align}\label{k1}
\rr1 \coloneqq \ceil{10,000/\eps^2}= \Theta(1)
\end{align}
cheapest \emph{heavy} edges from $v$ to any of the $n'=n(1-o(1))$ vertices not yet added (see \cref{n'}),
as before also excluding vertex $t$.
Consider this step a \emph{failure} if any added edge has cost greater than $\frac19 \eps \wo$.
For each neighbour $v$ there are $n'$ edges under consideration, with weights \iid $U(0,1)$,
and failure occurs iff the number $X$ of edges with weights in the interval
$[\frac1{11} \eps \wo, \frac19 \eps \wo]$
is smaller than $\rr1$.
Note that $X \sim \Bi(n', (\frac19-\frac1{11}) \eps \wo)$,
thus
$\E X
 = (1-o(1)) \, (\frac19-\frac1{11}) \eps s
 = \Theta(\eps s)$.
Failure means that $X < \rr1 < \E X/2$,
so by \cref{lemma:BinDev} the probability of failure for a given $v$ is $\leq \exp(-\Theta(\eps s))$.
The number of level-1 vertices $v$ is $\kk = O(s)$,
so by the union bound the probability of any failure is
\begin{align}\label{level1fail}
  \leq O(s) \exp(-\Theta(\eps s))
   &
   \leq \exp(-\Theta(s)),
\end{align}
by suitable adjustment of the constants implicit in $\Theta$.

This edge selection conditions the costs of the other edges incident on each $v$,
but none will play any role in the analysis.
The adversary must leave $\rr0$ edges out of the root,
expanding to \[ \rr0 \rr1 \geq \tfrac1{10} \eps s \cdot 10,000/\eps^2 = 1,000 s/\eps \] (heavy) edges out of level 1,
of which (by~\cref{Bheavy}) he can delete at most $12 s/\eps$,
leaving (very generously calculated) at least

\begin{align}\label{K1}
\Rsub1 & \coloneqq 120 s/\eps = \Theta(\rr0 \rr1)
\end{align}
edges out of level 1.
The vertices at the opposite endpoints of these edges constitute level 2.

\subsection{Level 2, cheapest edges}\label{level2}
From each level 2 vertex $v$ in turn,
add to $R$ the cheapest $\rr2$ edges to any of the $n'=n(1-o(1))$ vertices not yet added,
again also excluding vertex $t$ from consideration.
Here choose $\rr2$ so as to make
\begin{align}\label{K2}
  \Rsub2 \coloneqq \Rsub1 \rr2 = 12 s^2 ,
\end{align}
namely taking
\begin{align}
  \rr2 = \frac{12 s^2}{\Rsub1} = \frac{12 s^2}{120 s/\eps} = \tfrac1{10} \eps s = \Theta(\eps s) .  \label{k2}
\end{align}
Consider this step a \emph{failure} if any added edge has cost greater than $\frac19 \eps \wo$.
For each neighbour $v$ there are $n'=(1-o(1))n$ edges under consideration, with weights \iid $U(0,1)$,
and failure occurs iff the number $X$ of edges with weights in the interval
$[0, \frac19 \eps \wo]$
is smaller than $\rr2$.
Note that $X \sim \Bi(n', \frac19 \eps \wo)$,
thus $\E X = (1-o(1)) \, \frac19 \eps s = \Theta(\eps s)$.
Failure means that $X < \rr2 < 0.99 \E X$,
so by \cref{lemma:BinDev} the probability of failure for a given $v$ is $\leq \exp(-\Theta(\eps s))$.
The number of level-2 vertices $v$ is $(\kk) \rr1 = \OO{s}$
so by the union bound the probability of any failure is
\begin{align}\label{level2fail}
  & \leq \exp(-\Theta(\eps s))
\end{align}

This edge selection conditions the costs of the other edges incident on each $v$,
but none will play any role in the analysis.
The adversary had to leave at least $\Rsub1$ edges out of level 1,
expanding to $\Rsub1 \rr2 = \Rsub2 = 12 s^2$ edges out of level 2,
of which by~\cref{Bany} he can delete at most $10 s^2$,
leaving at least $2 s^2$ edges out of level 2.
The vertices at the opposite endpoints of these edges constitute level 3.

\subsection{Level 3, shortest-path trees}\label{level3}
We now grow each level-3 vertex $v$ to a tree $T_v$ with $d$ vertices,
including $v$, choosing
\begin{align}
  d \coloneqq \ceil{ \sqrt{\frac{n \ln n}{2 s^3}} \; } < \sqrt n. \label{ddef}
\end{align}
We grow these trees one after another,
always working within the $n'=n(1-o(1))$ vertices not yet added,
and again always excluding vertex t from consideration.

Controlling the lengths of the paths in $T_v$ would allow a choice of $d$ as large as $\sqrt n$,
but we make it smaller to keep the number of vertices in $R$ as small as possible
(and thus keep it to $o(n)$ for $s$ as large as possible).

Here it will be convenient to work with exponentially rather than uniformly distributed edge weights.
There are various easy ways to arrange this.
We do so by temporarily replacing each uniform weight $w$ with a weight $w' = -\ln (1-w)$;
it is standard that these transformed weights are exponentially distributed,
and that $w' \geq w$.
We construct a shortest-path tree (SPT) of order $d$ using the transformed weights;
it will not be an SPT
for the original weights,
but its paths will be short under the original weights, which is all that we care about.

Define the distance $\dist(u,v)$ between two vertices
to be the cost of a minimum-weight path between them,
and define the radius $\rad(T_v)$ of an SPT $T_v$
to be the maximum distance
from $v$ to any vertex in $T_v$.
The radius is described by the following claim,
which we phrase in a generic setting with $n$ vertices and a root vertex $s$.

\begin{claim} \label{treeradius}
In a complete graph $K_n$ with \iid exponential edge weights with mean 1,
the radius $X=\rad(T_s)$ of a shortest-path tree $T_s$ of order $d$ is
\begin{align}\label{treeX}
X = \sum_{i=1}^{d-1} X_i ,
\end{align}
where the $X_i$ are independent random variables with $X_i \sim \Exp(i(n-i))$.
\end{claim}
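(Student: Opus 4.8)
The plan is to build the shortest-path tree $T_s$ vertex by vertex, using the memorylessness of the exponential distribution, exactly as in the classical analysis of the random-weight SPT (this is the standard ``Dijkstra with exponential clocks'' argument). At each stage we will have an \emph{explored set} $S$ of the $i$ vertices already in the tree, together with their distances from $s$; we then show that the next vertex to be added is at distance $X_i$ beyond the current frontier, where $X_i \sim \Exp(i(n-i))$ independently of the past.

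First I would set up the exploration process. Start with $S_1 = \set{s}$ and $\dist(s,s)=0$. Given $S_i$ with $\card{S_i}=i$, consider all edges from $S_i$ to $V \setminus S_i$; there are $i(n-i)$ such edges. The next vertex added to the tree is the one minimising $\dist(s,u) + w(u,v)$ over $u \in S_i$, $v \notin S_i$, and its distance from $s$ is this minimum value. The key step is to argue that the \emph{increment} $X_i \coloneqq \dist(s, v_{i+1}) - \max\{\text{current distances}\}$ — more precisely, the amount by which the minimum over crossing edges exceeds what has already been ``used up'' on each such edge — is distributed as $\Exp(i(n-i))$ and is independent of everything revealed so far. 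This is where memorylessness does the work: conditioned on the history, for each crossing edge $\{u,v\}$ we know only that $w(u,v)$ exceeds the amount of it already consumed in the comparisons so far, so the residual weight is again $\Exp(1)$; the minimum of $i(n-i)$ independent $\Exp(1)$ residuals, measured as a common added offset, is $\Exp(i(n-i))$. I would make the bookkeeping precise by running the process in continuous ``time'': let time advance and let each crossing edge ``fire'' at rate $1$; the first firing after time $\dist(s,\cdot)$-offsets are accounted for gives the next tree vertex, and the waiting time is $\Exp(i(n-i))$ by superposition of Poisson clocks, independent of the past by the strong Markov property.

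Next, having established that the $i$th increment is $\Exp(i(n-i))$ independently of the previous increments, I would observe that after $d-1$ steps the tree $T_s$ has exactly $d$ vertices, and that the radius $\rad(T_s)$ — the largest distance from $s$ to a vertex of $T_s$ — equals the distance to the \emph{last} vertex added, since distances are non-decreasing in the order of addition (each new vertex is at least as far as all previous ones). Hence $\rad(T_s) = \sum_{i=1}^{d-1} X_i$ with $X_i \sim \Exp(i(n-i))$ independent, which is exactly \cref{treeX}.

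The main obstacle is making the independence-and-memorylessness argument fully rigorous rather than hand-wavy: one has to be careful that when we ``reveal'' the identity of the minimising crossing edge at step $i$, we condition on a specific edge achieving the minimum, which is compatible with exponential residuals precisely because of memorylessness, but the exact statement of what information is revealed (the ordering of partial sums, not the raw edge weights) needs care. The cleanest route, which I would take, is the continuous-time Poisson-clock formulation: assign each edge an independent rate-$1$ Poisson process, let $w(u,v)$ be its first arrival, and run a continuous-time exploration where the frontier grows whenever a crossing edge's clock rings at the appropriate accumulated time; then the increments are waiting times of a superposition of $i(n-i)$ rate-$1$ clocks, the strong Markov property gives independence for free, and \cref{treeX} follows immediately. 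Everything else is routine.
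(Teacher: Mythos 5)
Your proposal is correct and is essentially the paper's own argument: the paper likewise views the growth of the shortest-path tree as first-passage percolation (``infection spreading'') from $s$, uses memorylessness to see that each of the $i(n-i)$ crossing edges carries an independent residual $\Exp(1)$ weight, and concludes that each radius increment is $\Exp(i(n-i))$, independent of the past. Your continuous-time Poisson-clock phrasing is just a restatement of the same superposition/memorylessness step, so no substantive difference.
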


\begin{proof}
Following~\cite{Janson123}, think of the process of finding shortest paths from $s$
to other vertices as first-passage percolation or ``infection spreading'' starting from $s$.
Let $\RD\coloneqq \RD(r)$
be the set of vertices within radius (distance) $r$ of $s$;
we think of gradually increasing $r$, starting with $r=0$ where $\RD=\set s$.
It is well known that each edge $(v,u) \in \RD(r) \times (V\setminus \RD(r))$
has exponentially distributed weight $W$ conditioned by $W+\dist(s,v) \geq r$,
and that these random weights are independent.
This can be seen by imagining that infection has spread to radius $r$ from $s$,
including to the vertex $v$ and additionally a length $r-\dist(s,v)$ further along the edge $(v,u)$,
and appealing to the memoryless property of the exponential distribution;
it can also be verified by analysing Dijkstra's algorithm in this randomised setting.

It follows that the distance $X_1$ to the vertex nearest $s$
is distributed as $X_1 \sim \Exp(n-1)$;
the additional distance to the next vertex is $X_2$ with $X_2 \sim \Exp(2(n-2))$ and independent of $X_1$
(for total distance $X_1+X_2$);
and when there are $i$ vertices in the tree, the additional distance to the next is $X_i \sim \Exp(i(n-i))$,
with all the $X_i$ independent, for total distance as claimed.
\end{proof}

\emph{We will only use trees} $T_v$ whose radius
is  $X \leq (1+\tfrac29 \eps) \, \tfrac12 \ln n/n$
$< \tfrac12 \ln n/n + \tfrac19 \eps \wo$.
Call a tree a failure (and do not include it in the structure $R$)
if $X > (1+\tfrac29 \eps) \tfrac12 \ln n/n$.
Declare the construction of level 3
a \emph{failure} if more than $0.01 s^2$ trees fail.

Since~\cref{treeX} is monotone increasing in $d$,
the larger the $d$, the greater the probability of failure,
so in the next paragraphs we will pessimistically take $d$ to be $\sqrt n$
(ignoring integrality since $\sqrt n$ is large).
In this case,
applying \cref{treeradius} to $T_v$, constructed in a complete graph of order
$n'=n(1-o(1))$, the expectation of $X$ is
\begin{align}
\mu \coloneqq
 \E X
 &= \sum_{i=1}^{d-1} \frac1{i(n'-i)}
 = \frac{1+o(1)}{n} \sum_{i=1}^{d-1} \frac1{i}
 = (1+o(1)) \frac{\ln d}n
 = (1+o(1)) \frac12 \frac{\ln n}n    \label{treeDia}
 .
\end{align}
Thus, failure of $T_v$ implies that
\begin{align}\label{treefail1}
  \frac X \mu &> 1+\frac15 \eps .
\end{align}
To bound the probability of this event we require one more lemma
(also used later in proving \cref{lemma:edge-orderstat}).

\begin{lemma}[{\cite[Theorem 5.1]{JansonExpTail}}]\label{exptail}
	Let $X=\sum_{i=1}^{n} X_i$ with $X_i \sim \Exp(a_i)$ independent rate-$a_i$ random variables, where $a_i \geq 0$. Write $a_{*} \coloneqq \min_{i} a_i$ and $\mu \coloneqq \E X = \sum_{i=1}^n \frac 1{a_i}$.
Then:
\newline
for any $\lambda = 1+\eps > 1$,
\begin{align}\label{exp.1}
  \Prob(X \geq \lambda \mu)
  & \leq \lambda^{-1} e^{-a_{*} \mu (\lambda - 1 - \ln \lambda)}
  \leq \exp(-\Omega(\a_* \mu))
\end{align}
for any $\lambda = 1- \eps < 1$,
\begin{align}\label{exp.2}
  \Prob(X \leq \lambda \mu)
   & \leq e^{-a_{*} \mu (\lambda - 1 - \ln \lambda)}
   \leq \exp(-\Omega(\a_* \mu)),
\end{align}
and for any $\eps>0$,
\begin{align}\label{exp.3}
  \Prob( \abs{X-\mu} \geq \eps \mu)
   & \leq 2 \exp(-\Omega(\a_* \mu )).
\end{align}
The constants implicit in the $\Omega(\cdot)$ expressions are positive and only depend on $\eps$.

\end{lemma}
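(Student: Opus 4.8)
The plan is a Chernoff bound by exponential tilting, exploiting the explicit moment generating function of a sum of independent exponentials. Write $u_i \coloneqq a_*/a_i \in (0,1]$, so that $\sum_i u_i = a_*\mu$. For every real $t<a_*$ one has $\E e^{tX} = \prod_{i=1}^n \frac{a_i}{a_i-t} = \prod_{i=1}^n \bigl(1 - u_i\, t/a_*\bigr)^{-1}$, and the whole argument reduces to choosing $t$ well and turning this product into a bound that depends on the rates only through $a_*\mu$.

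For the upper tail I would take $\lambda = 1+\eps>1$ and $t = (1-1/\lambda)\, a_* \in (0,a_*)$, so $\theta \coloneqq t/a_* = 1-1/\lambda$, and apply Markov's inequality to $e^{tX}$, giving $\Pr(X \geq \lambda\mu) \leq e^{-t\lambda\mu}\,\E e^{tX}$. Since $u \mapsto -\ln(1-u\theta)$ is convex on $[0,1]$ and vanishes at $0$, each factor obeys $\bigl(1-u_i\theta\bigr)^{-1} \leq (1-\theta)^{-u_i} = \lambda^{u_i}$, so $\E e^{tX} \leq \lambda^{a_*\mu}$, while $e^{-t\lambda\mu} = e^{-(\lambda-1)a_*\mu}$; hence $\Pr(X\geq\lambda\mu) \leq e^{-a_*\mu(\lambda-1-\ln\lambda)}$, which is \eqref{exp.1} apart from the prefactor $\lambda^{-1}$. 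As $\lambda-1-\ln\lambda$ is a positive constant depending only on $\eps$, the weaker form $\exp(-\Omega(a_*\mu))$ --- the only version this paper ever uses --- follows at once. The lower tail is symmetric: take $\lambda = 1-\eps<1$ and $t = (1-1/\lambda)\, a_* < 0$ (Markov on $e^{tX}$ is valid for all $t<0$), and use instead that $v \mapsto \ln\bigl(1+v(1/\lambda-1)\bigr)$ is concave on $[0,1]$ and vanishes at $0$, which yields $\frac{a_i}{a_i-t} = \bigl(1+u_i(1/\lambda-1)\bigr)^{-1} \leq \lambda^{u_i}$ and then, exactly as above, $\Pr(X \leq \lambda\mu) \leq e^{-a_*\mu(\lambda-1-\ln\lambda)}$, i.e.\ \eqref{exp.2} (note $\lambda-1-\ln\lambda>0$ whenever $\lambda\neq 1$, so the bound genuinely decays). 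Finally \eqref{exp.3} is the union bound over \eqref{exp.1} at $\lambda=1+\eps$ and \eqref{exp.2} at $\lambda=1-\eps$.

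The step I expect to be genuinely fussy is recovering the sharp $\lambda^{-1}$ prefactor in \eqref{exp.1}: the plain tilting bound above loses it, and getting it back needs a more careful treatment of the minimal-rate summand --- for instance tilting only $\sum_{i\neq i^\star}X_i$ and estimating $X_{i^\star}$ against its own density, or bounding $\Pr(X\geq\lambda\mu)$ directly from the density of $X$ rather than from its moment generating function. Since every application in the paper uses only the $\exp(-\Omega(a_*\mu))$ versions, in practice I would prove the clean bound $e^{-a_*\mu(\lambda-1-\ln\lambda)}$ in both tails and cite Janson for the polynomial refinement. The only other thing to watch is mundane: in the two tails one needs a convexity inequality in one case and a concavity inequality in the other, and it is easy to reach for the wrong one.
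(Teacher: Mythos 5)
Your proposal is correct, and the tilting argument is sound: the convexity bound $(1-u\theta)^{-1}\le(1-\theta)^{-u}$ for $u\in[0,1]$ (and its concave counterpart for negative $t$) does reduce the moment generating function to $\lambda^{a_*\mu}$, and combined with $e^{-t\lambda\mu}=e^{-(\lambda-1)a_*\mu}$ it yields $\Pr(X\ge\lambda\mu)\le e^{-a_*\mu(\lambda-1-\ln\lambda)}$ and likewise for the lower tail, with \cref{exp.3} by the union bound; since $\lambda-1-\ln\lambda>0$ is a constant depending only on $\eps$, the $\exp(-\Omega(a_*\mu))$ forms follow. The route is, however, genuinely different from the paper's: the paper does not prove the sharp inequalities at all, but quotes them verbatim from Janson's Theorem~5.1 and only observes that the weakened forms and the two-sided bound \cref{exp.3} ``follow immediately.'' What your approach buys is a short, self-contained derivation of everything the paper actually uses (every application of \cref{exptail} invokes only the $\exp(-\Omega(a_*\mu))$ versions, as you note, and your Chernoff exponent matches Janson's, so even the $\delta^2$-type dependence used in the lower-bound section is preserved); what it gives up is the $\lambda^{-1}$ prefactor in \cref{exp.1}, which your bare Markov bound cannot recover, so as stated the lemma's first display would still need the citation (or the more careful treatment of the minimal-rate summand you sketch) --- exactly the fallback you propose, and no worse than what the paper itself does. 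One trivial point you could add for completeness: for $\eps\ge1$ the lower-tail event in \cref{exp.2} and \cref{exp.3} is empty since $X>0$, so the restriction to $0<\lambda<1$ is harmless.
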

\begin{proof}
The inequalities in \cref{exp.1} and \cref{exp.2} in terms of $\lambda$
are directly from~\cite[Theorem 5.1]{JansonExpTail}.
The remaining expressions, including \cref{exp.3},
follow immediately.
\end{proof}

From \cref{exp.1} of \cref{exptail},
the probability
of the event in \cref{treefail1} (and thus that of $T_v$ failing) is at most
\begin{align}
  \Pr\big( {X-\mu} > \frac15 \eps \mu \big)
    & \leq \exp(-\Omega(\a_* \mu) )
    = \exp( -\Omega(\ln n)), \label{treeFail}
\end{align}
using that $a_* = n' = (1-o(1))n$, $\mu$ is given by \cref{treeDia},
and $\eps=\Theta(1)$.

The total number of trees built is $N=(\kk) \rr1 \rr2$,
which, with reference to \cref{sdef}, \cref{k0}, \cref{k1}, and \cref{k2},
is $\Theta(s^2)$.
By \cref{treeFail}, each tree independently fails with at most some probability $p=o(1)$.
Thus, the number of trees surviving dominates $\Bi(N,1-p)$,
with expectation $\lambda=N(1-p)=N(1-o(1))$.
Failure at level 3 means that at least $0.01 s^2=\Theta(N)$ trees fail,
equivalently the number surviving is at most some $\lambda(1-\Theta(1))$,
which by \cref{lemma:BinDev} has probability
\begin{align}\label{level3fail}
  & \exp(-\Omega(s^2)) .
\end{align}

\begin{remark}\label{rem1}
When construction of a tree $T_v$ rooted at a level-3 vertex $v$ is finished,
the edge between any vertex $a$ of $T_v$ and any vertex $b$ in $V' \setminus V(T_v)$ has weight
$w(a,b)$ that
--- still in the uniform model with edge weights temporarily transformed to be exponentially distributed ---
is exponentially distributed conditional upon being $\geq \rad(T_v)-d(v,a)$.
Equivalently, the edge $\eab$ gives a $v$-to-$b$ path (through $a$)
with cost
$\rad(T_v)+\Xab$,
where the ``excess'' $\Xab$ has simple exponential distribution $\Xab \sim \Exp(1)$ (with no conditioning).
Furthermore, the $\Xab$ are independent, over all choices of $a$ and $b$.
\end{remark}

Call $R_s$ the now-complete construction on $s$.
Note that there is no conditioning on edges between the remaining vertices;
in particular, the SPT infection process (or equivalently Dijkstra's algorithm) as described in \cref{treeradius} never looked at edges between uninfected vertices.

\subsection{Symmetric construction on vertex \tp{$t$}{t}}
Just as we have constructed $R_s$, we now make a similar construction $R_t$ for vertex $t$,
with the same branching factors out of levels 0, 1, and 2 and similar SPTs on level-3 vertices.
Since the number $n'$ of vertices available after constructing $R_s$ still satisfies $n' = (1-o(1))n$,
and because the construction on $s$ did not look at nor condition any edge between these vertices,
the construction on $t$ enjoys the same properties as that on $s$.

\subsection{Edges between the trees on \tp{$s$}{s} and \tp{$t$}{t}}
It remains only to complete paths between $s$ and $t$,
which we do by adding cheap edges (where present)
between the SPTs in $R_s$ and those in $R_t$.

Let $T_u$ be an SPT rooted at a level-3 vertex $u$ of $R_s$,
and $T_v$ one rooted at a level-3 vertex $v$ of $R_t$.
Let $a$ and $b$ be any vertices in $T_u$ and $T_v$ respectively.
By \cref{rem1},
edge $\eab$ gives a $u$-to-$b$ path with cost $\rad(T_u)+\Xab$,
the collection of all the excesses $\Xab$ being \iid each with distribution $\Xab \sim \Exp(1)$.
Thus, $\eab$ gives a $u$-to-$v$ path with cost $\leq \rad(T_u) + \Xab + \rad(T_v)$.

Select, and add to the full construction $R$, any such ``middle edge'' $\eab$
having $\Xab \leq \frac19 \eps \wo$.
This completes the construction of $R$.

\subsection{Order of \tp{$R$}{R}, failure probability, and path costs}\label{sec:Rsize}
It is worth first confirming that the construction uses, as claimed, $o(n)$ vertices.
The number of vertices used is of order $(k+\rr0)\rr1\rr2 d$,
which by \cref{k0}, \cref{k1}, \cref{k2}, and \cref{ddef} is $O(s^2 d)$.
Recalling from~\cref{ddef} that $d = \ceil{ \sqrt{\lfrac{n \ln n}{2 s^3}} \;}$,
as long as the ceiling function does not affect the order of $d$,
the total number of vertices is $O(s^2 d) = O(\sqrt{n s \ln n})$,
which is $o(n)$ for $s=o(n/\ln n)$.
However, the ceiling function does affect the order of $d$ when $n \ln n/2s^3 < 1$,
i.e., when $s> {(\frac12 n \ln n)}^{1/3}$;
in this case, $d=1$, the total number of vertices used is $O(s^2)$,
and this is still $o(n)$ if $s=o(\sqrt n)$.
Taking the two cases together, the construction is valid up to any $s=o(\sqrt n)$,
or equivalently for any $k=o(\sqrt n)$.

Failures at levels 0, 1, and 2 each occur w.p.\ $\leq \exp(-\Theta(s))$
(by \cref{level0fail}, \cref{level1fail}, and \cref{level2fail}),
and at level 3 w.p.\ $\leq \exp(-\Omega(s^2))$ (by \cref{level3fail}),
so by the union bound the probability of any failure is $\leq \exp(-\Theta(s))$.

We now confirm that,
assuming that the construction was successful,
any \st path in $R$ \emph{through successful SPTs}
has cost $\leq (1+\eps) \wo$.
(Remember that there may be some unsuccessful SPTs.)
By assumption of success, any level-0 edge on $s$ or $t$ has cost
$\leq \tfrac k n+\frac19 \eps \wo$,
any level-1 edge has cost $\leq \frac19 \eps \wo$,
and any level-2 edge also has cost $\leq \frac19 \eps \wo$.
Each successful level-3 tree $T$ in $R_s$ or $R_t$ has radius
$\rad(T) \leq (1+\frac29 \eps) \frac12 \ln n/n \leq \frac12 \ln n/n + \frac19 \eps \wo$,
and each selected middle edge $\eab$ connects the roots of two trees
at an excess cost (above the sum of the two radii) of $\Xab \leq \frac19 \eps \wo$.
The total of the 9 upper bounds in question is
\begin{align}\label{Rpathcost}
 2 \cdot \frac k n + 2 \cdot \frac12 \ln n/n + 9 \cdot \frac19 \eps \wo
  &= (1+\eps) \wo .
\end{align}

\subsection{Robustness of \tp{$R$}{R}}
We now show that, after the deletion of the $k$ cheapest paths in $G$,
there remains at least one path in $R$ (that uses successful SPTs).
Recall from \cref{adversary}
that deletion of the $k$ cheapest paths in $G$ is conservatively modeled as
an adversarial deletion subject to:
\cref{Bincident}, the deletion of exactly $k$ edges incident on each of $s$ and $t$;
\cref{Bheavy}, the number of heavy edges deleted at level 1;
and \cref{Bany}, the total number of edges deleted elsewhere in $R$
(at levels 2 and 3, and joining $R_s$ and $R_t$).

Without loss of generality we may assume that the adversary does not
delete an edge within an SPT $T$,
nor a middle edge from such a tree to a facing one,
since deleting the level-2 edge into the level-3 root of $T$
destroys more paths in $R$ at the same budgetary cost.

By the assumption of success,
there are at most $0.01 s^2$ failed SPTs on each of $s$ and $t$,
and for simplicity we will deal with them by imagining all trees to be successful
but allowing the adversary his choice of this many SPTs to delete;
by the argument above we can model this as deletion of edges into the roots of these trees,
and simply add $0.02 s^2$
to this budget.

Let us now allow the adversary to
delete $k$ edges from each of $s$ and $t$,
$12s/\eps$ edges out of level 1 for each (double-counting the heavy-edge budget),
and $10.02 s^2$ edges out of level 2 for each (again double-counting).
Can he destroy all \st paths?
We have not yet made any high-probability structural assertion about the middle edges,
so this is a probabilistic question:
what is the probability, over the randomness still present in the middle edges,
that there is an adversarial deletion destroying all paths?

Of the $\kk=O(s)$ edges on $s$, the adversary chooses $k$ to delete;
there are at most $2^{O(s)}$ ways to do so.
Any choice leaves $\Theta(\rr0 \rr1) = \Theta(s)$ edges out of level 1,
of which the adversary is able to delete a positive fraction, again in at most $2^{O(s)}$ ways.
Any choice leaves $\Theta(s^2)$ edges out of level 2,
of which the adversary is able to delete a positive fraction, in at most $2^{O(s^2)}$ ways.
The adversary makes a similar set of choices on $t$,
but still this comes to just $2^{O(s^2)}$ possible outcomes in all.

A given deletion choice destroys all paths precisely if it leaves no middle edge
of excess $\leq \frac19 \eps \wo$.
(Remember that, w.l.o.g., we have excluded deletions in and between the
SPTs at level~3.)
By construction, any deletion choice leaves $\Theta(s^2)$ edges out of level 2
and thus, by \cref{ddef}, $\Theta(s^2 d) = \Omega(\sqrt{n s \ln n})$ vertices in SPTs in each of $R_s$ and $R_t$,
for $\Omega(n s \ln n)$ potential middle edges.
A middle edge is \emph{selected} if its excess cost (in the exponential model) is
$w'=-\ln(1-w) \leq \frac19 \eps \wo$,
i.e., if $1-w \geq \exp(-\frac19 \eps \wo)$,
thus is \emph{rejected} with probability $\exp(-\frac19 \eps \wo)$.
There is no path only if every potential edge is rejected,
which happens w.p.\ $\leq \exp(-\frac19 \eps \wo \cdot n s \ln n) = \exp(-\Omega(s^2 \ln n))$.
Taking the union bound over all adversarial choices,
the probability than any choice leaves no paths is
\begin{align} \label{smallkAdversaryFailure}
  2^{O(s^2)} \exp(-\Omega(s^2 \ln n)) &= \exp(-\Omega(s^2 \ln n)) .
\end{align}
This is dominated by the failure probabilities $\exp(-\Theta(s))$ for other steps.

\subsection{Success for each \tp{$k$}{k}, and for all \tp{$k$}{k}}\label{sec:small-success}
We have shown that, for any $k=o(\sqrt n)$,
subject to an absence of failures,
we can generate a robust structure $\Rk$
in which, after adversarial deletions,
there remains an \st path of cost $\leq (1+\eps) \wo(k)$.
(Remember that $\wo$ and $s$ are simple functions of $k$, per~\cref{sdef} and~\cref{w0def}. Here we retain the argument $k$ we usually suppress.)
There are two types of failures possible.
The first is
that the graph fails \cref{LLenBd}'s conclusion that ``cheap paths are short'';
this occurs w.p.\ $\OO{n^{-1.9}}$.
The second is that $\Rk$ is not robust;
this occurs w.p.\ $\OO{\exp(-\Omega(s(k)))}$.

Assume success in generating $\Rk$.
We claim that
$P_1,\ldots,P_\kp$ all have cost $\leq (1+\eps) \wo(k)$
(call this ``cheap'').
Suppose not. Then there is some $i\leq k$ for which
$P_1,\ldots,P_i$ are cheap but $P_{i+1}$ is not.
Our adversary's budget allows it to delete $P_1,\ldots,P_i$,
and by assumption of success this leaves a cheap path $P$ in $\Rk$.
Thus there is a cheap $i+1$st path in $G$, a contradiction.

It follows that \emph{for each} $k$, $X_\kp \leq (1+\eps) \wo(k)$
with probability
\begin{align}
1-O(n^{-1.9}) -O(\exp(-\Omega(s(k)))) . \label{ksucceeds}
\end{align}

\medskip

A simple calculation shows that
\whp $X_\kp \leq (1+\eps) \wo(k)$
\emph{simultaneously for all $k$} in this range,
proving the upper bound in \cref{Xkbounds}.
By the union bound, the probability of
failure to build a robust structure for \emph{any} $k$ is at most
\begin{align}
 \sum_{k=0}^{\infty} \exp(-\Omega(s(k)))
  &\leq \ln n \exp(-\Omega(\ln n)) + \sum_{k=\ln n}^{\infty} \exp(-\Omega(k)) \notag \\
  &= \exp(-\Omega(\ln n))
  = n^{-\Omega(1)}. \label{eq:all-k-small}
\end{align}
Including the probability of failure in applying \cref{LLenBd},
the total failure probability is $O(n^{-1.9} + n^{-\Omega(1)}) = o(1)$.

\subsection{Limitation to small \tp{$k$}{k}} \label{small-k-limitation}
We have established \cref{Tmain} up to any $k=o(\sqrt n)$,
and the construction of $\Rk$ was tailored to such values.
For levels using heavy edges, fanout is limited
to $O(s)$.
On the other hand, the meet-in-the-middle argument requires that each side
grow large, to $\Omega(\sqrt{n/s})$.
Thus, for small $k$,
a more-than-constant number of levels is needed.
Summing heavy edges over this many levels would exceed the target weight $(1+\eps) \wo$,
so light edges are needed.
The adversary may delete $\Theta(s^2)$ light edges,
so the construction must contain at least this many.
The construction explicitly required each light edge to lead to a new vertex,
and we do not readily see how to do otherwise as long as we are using shortest-path trees,
thus intrinsically limiting $s$ (thus $k$) to $O(\sqrt n)$.
For larger $k$, however, we can obtain sufficient heavy-edge fanout in constant depth,
permitting a simpler construction described in \cref{largekUB}.

\section{Edge order statistics}\label{sec:order-stat}

In this section we establish results on order statistics
needed in later sections.
Let $\set{\Wok}_{k=1}^{n-1}$ be the order statistics of $n-1$ \iid random variables,
variously uniform $U(0,1)$ or exponential $\Exp(1)$.
We choose $n-1$ rather than $n$ as the parameter both
because many expressions are more natural in this parametrisation,
and because this way
$\Wok$ is
the cost of the $k$th cheapest edge incident to a fixed vertex $v \in K_n$.

The following lemma is used in \cref{pathweightsdetail}.
\begin{lemma}\label{intervals}
	Let $l=n^{-0.99}$. Consider the unit interval $[0,1]$ with $n$ points placed uniformly and independently at random. Then \whp every interval of length at least $l'\geq l$ contains at least $0.99 l' n$ points.
\end{lemma}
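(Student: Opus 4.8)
The plan is to reduce from the uncountable family of all subintervals of $[0,1]$ to a polynomial-size family of ``grid'' intervals via discretisation, apply the Chernoff bound of \cref{lemma:BinDev} to each grid interval, take a union bound, and then recover the general statement by absorbing the negligible rounding loss at the endpoints.

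First, I would fix a fine grid $G_\delta = \set{j\delta : 0 \le j \le 1/\delta}$ of spacing $\delta = n^{-3}$, and call $I' = [a',b']$ a \emph{grid interval} if $a',b' \in G_\delta$; there are at most $(n^3+1)^2 \le n^7$ of them. For a grid interval $I'$ of length $l'' \coloneqq b'-a' \ge l/2$, the number $N(I')$ of the $n$ points lying in $I'$ is $\Bi(n,l'')$ with mean $l'' n \ge (l/2)n = \tfrac12 n^{0.01}$, so by \cref{lemma:BinDev} with $\eps = 1/200$,
\[
  \Pr\bigl(N(I') < 0.995\, l'' n\bigr) \le \exp\bigl(-\tfrac12 (1/200)^2\, l'' n\bigr) \le \exp\bigl(-\Omega(n^{0.01})\bigr).
\]
A union bound over the $\le n^7$ grid intervals of length $\ge l/2$ shows that with probability $1 - n^7\exp(-\Omega(n^{0.01})) = 1-o(1)$ every such grid interval $I'$ contains at least $0.995\,|I'|\,n$ points; call this event $\mathcal E$.

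It then remains to check that $\mathcal E$ implies the conclusion. Suppose $\mathcal E$ holds and let $I=[a,b]$ be arbitrary with $l' \coloneqq b-a \ge l$. Set $a' = \ceil{a/\delta}\,\delta$ and $b' = \floor{b/\delta}\,\delta$, so $I' \coloneqq [a',b'] \subseteq I$ is a grid interval with $|I'| \ge l' - 2\delta$. Since $l' \ge l = n^{-0.99} \gg n^{-3} = \delta$, for $n$ large we have $|I'| \ge l'-2\delta \ge l/2$, so $\mathcal E$ applies to $I'$ and
\[
  N(I) \ge N(I') \ge 0.995\,|I'|\,n \ge 0.995\,(l'-2\delta)\,n = 0.995\, l' n\Bigl(1 - \tfrac{2\delta}{l'}\Bigr) \ge 0.995\, l' n\,(1 - 2n^{-2.01}),
\]
which exceeds $0.99\, l' n$ for $n$ large. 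This holds simultaneously for all $I$ on the event $\mathcal E$, giving the lemma.

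I do not expect a serious obstacle here; the only point needing care is the discretisation bookkeeping — choosing $\delta$ fine enough that truncating the endpoints to the grid costs only a $(1-o(1))$ factor in length (which it does, as $l/\delta = n^{2.01} \to \infty$), while keeping the number of grid intervals polynomial so that the $\exp(-\Omega(n^{0.01}))$ tail from \cref{lemma:BinDev} comfortably survives the union bound.
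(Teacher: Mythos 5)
Your proof is correct, and it rests on the same two ingredients as the paper's: the Chernoff bound of \cref{lemma:BinDev} and a union bound over a finite family of intervals, exploiting that the relevant means are of order $l n = n^{0.01}$ so the stretched-exponential tail swallows any polynomial number of events. The discretisation is organised differently, though. The paper partitions $[0,1]$ into contiguous blocks of length $l/1000$, applies the Chernoff bound only to these $\approx 1000\,n^{0.99}$ blocks, and then lower-bounds the count in an arbitrary long interval by summing the counts of the whole blocks it contains (losing at most a $2/1000$ fraction of its length at the two ends, whence the $0.999\cdot 0.998 \geq 0.99$ bookkeeping). You instead take a fine endpoint net of spacing $n^{-3}$, apply the concentration bound directly to each of the $\leq n^{7}$ grid intervals of length $\geq l/2$, and handle an arbitrary interval by rounding its endpoints inward, losing only an additive $2\delta = o(l')$ in length. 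Your route trades a much larger union bound (harmless here) for simpler approximation bookkeeping and gives the per-interval concentration directly rather than via additivity over blocks; the paper's route keeps the union bound minimal and at a single scale. Both are sound, and the constants work out in your version ($0.995(1-2n^{-2.01}) > 0.99$ for large $n$), so there is no gap.
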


\begin{proof}
Partition the unit interval into contiguous intervals $I_i$
each of length $L\coloneqq l/1000$,
using $\floor{1/L}$ such intervals
(possibly leaving a small interval near 1 not covered).
Any interval $I$ of length $l' \geq l$ has at least a $998/1000$ fraction of
its length covered by intervals $I_i \subset I$,
and we will show that \whp every interval $I_i$ contains at least $0.999 L n$ points
(that is, at least a $0.999$ fraction of the expectation).
If so, it follows that $I$
has at least $0.999 \cdot 0.998 l' n \geq 0.99 l' n$ points.
	
The distribution of the number of points in each interval $I_i$ of length $L$ follows the binomial distribution $\Bi(n, L)$.
By \cref{lemma:BinDev},
\begin{align*}
\Prob \parens{\Bi(n, L) \leq 0.999 L n} \leq \exp(-\Omega(L n)) ,
\end{align*}
where the sign in the $\Omega$ is taken as positive.
The probability that any interval $I_i$ contains less than $0.999$ points is, by the union bound, at most,
\begin{align}\label{eq:intervals}
\floor{1/L} \cdot \exp(-{\Omega(L n)})
= \exp(-\Omega(n^{0.01}))
= o(1)
\end{align}
as desired.
\end{proof}

The following lemma is used in \cref{eq:implies-main} and \cref{eq:edge-orderstat}.
\begin{lemma}\label{lemma:edge-orderstat}
	Let $\set{\Wok}_{k=1}^{n-1}$ be the order statistics of $n-1$ \iid random variables,
either all uniform $U(0,1)$ or all exponential $\Exp(1)$.
For any $\eps > 0$ and $a= a(n) = \omega(1)$, \whp
	\[ 1-\eps \leq \frac{\Wok}{\E \Wok} \leq 1+\eps \]
	simultaneously for all $k$ in the range $a \leq k \leq n-1$.
\end{lemma}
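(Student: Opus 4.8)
The plan is to treat the uniform and exponential cases uniformly by a Chernoff/Poissonization argument, controlling $\Wok$ by counting how many of the $n-1$ edge weights fall below a deterministic threshold. First I would record the expectations: in the uniform case $\E\Wok = k/n$ (the $k$th order statistic of $n-1$ i.i.d.\ $U(0,1)$'s has mean $k/n$), and in the exponential case $\E\Wok = \sum_{i=1}^k \tfrac{1}{n-i}$, which is the representation used in \cref{exptail} via $\Wok = \sum_{i=1}^k X_i$ with $X_i \sim \Exp(n-i)$ independent. In both cases $\E\Wok$ is increasing in $k$, and for $k \geq a = \omega(1)$ we have $\E\Wok = \Omega(a/n) = \omega(1/n)$, which is what makes the deviation bounds summable.

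For the \emph{upper tail} $\Wok \leq (1+\eps)\E\Wok$: fix $k$ and let $\tau_k \coloneqq (1+\eps)\E\Wok$. The event $\Wok > \tau_k$ is exactly the event that fewer than $k$ of the $n-1$ i.i.d.\ weights lie in $[0,\tau_k]$. In the uniform case this count is $\Bi(n-1, \tau_k)$ with mean $(1+\eps)(1-\tfrac1n)k \geq (1+\tfrac\eps2)k$ for $n$ large, so \cref{lemma:BinDev} gives failure probability $\leq \exp(-\Omega(\eps^2 k))$. In the exponential case, since $\Pr(\Exp(1) \leq x) = 1-e^{-x} \geq x - x^2/2$ and $\tau_k = (1+\eps)\E\Wok = O(k/n) = o(1)$, the count stochastically dominates a binomial with mean $(1-o(1))(1+\eps)(n-1)\,\E\Wok$; a short computation using $\E\Wok \geq k/n$ shows this mean is at least $(1+\tfrac\eps2)k$, so again \cref{lemma:BinDev} applies. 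Alternatively, and perhaps cleaner for the exponential case, one can bound the upper tail directly via \cref{exp.2}: $\Wok = \sum_{i=1}^k X_i$ with $a_* = \min_{i\leq k}(n-i) = n-k \geq n/2$ (for $k \leq n/2$; the range $k > n/2$ needs the binomial-count argument instead, since then $a_*$ can be as small as $1$), giving $\Pr(\Wok \leq (1-\eps)\E\Wok) \leq \exp(-\Omega(a_*\E\Wok)) = \exp(-\Omega(n \cdot k/n)) = \exp(-\Omega(k))$ — wait, that is the \emph{lower} tail; for the upper tail use \cref{exp.1} symmetrically.

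For the \emph{lower tail} $\Wok \geq (1-\eps)\E\Wok$: set $\sigma_k \coloneqq (1-\eps)\E\Wok$; then $\Wok < \sigma_k$ is the event that at least $k$ of the $n-1$ weights lie in $[0,\sigma_k]$, i.e.\ a binomial with mean $(1-\eps)(1-o(1))k \leq (1-\tfrac\eps2)k$ exceeds $k$, which by the corresponding upper Chernoff bound (the companion to \cref{lemma:BinDev}, or \cref{exp.1} applied to the binomial via \cite{Janson2001}) has probability $\leq \exp(-\Omega(\eps^2 k))$; the exponential case is handled identically after the $1-e^{-x} \leq x$ comparison in the other direction, or directly by \cref{exp.2} when $k \leq n/2$. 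Combining the two tails, for each fixed $k$ in the range, $\Pr\big(\Wok \notin [(1-\eps)\E\Wok,(1+\eps)\E\Wok]\big) \leq 2\exp(-\Omega(\eps^2 k))$.

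Finally, take a union bound over $a \leq k \leq n-1$. Since $\sum_{k \geq a} \exp(-\Omega(\eps^2 k)) = O\big(\exp(-\Omega(\eps^2 a))\big) = o(1)$ because $a = \omega(1)$ and $\eps$ is fixed, the simultaneous statement holds \whp. The main obstacle is the regime $k$ close to $n-1$: there the naive direct-sum bound via \cref{exptail} degrades ($a_* = n-k$ can be $O(1)$), so one must use the binomial-counting formulation, where the mean of the relevant binomial is $\Theta(k) = \Theta(n)$ and concentration is still strong — the thresholds $(1\pm\eps)\E\Wok$ stay bounded away from $1$ in the uniform case (since $\E W_{(n-1)} = (n-1)/n < 1$) and are $O(\ln n)$ in the exponential case, so the $1-e^{-x}$ vs.\ $x$ comparison costs only a $(1\pm o(1))$ factor that is absorbed into $\eps$. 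I would state the argument once in the binomial-counting language so that both distributions and the whole range of $k$ are covered without case splitting.
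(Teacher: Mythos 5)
Your counting argument is sound for the uniform model over the whole range $a\leq k\leq n-1$: there the threshold $(1+\eps)k/n$ either exceeds $1$ (making the upper-tail event vacuous) or gives the counting binomial a genuine $(1+\Omega(\eps))$ multiplicative gap from $k$, and the lower tail is symmetric; this is a genuinely different and more elementary route than the paper's, which proves the exponential case first via the representation $\Wok=\sum_{i=1}^{k}Z_{n-i}$ with $Z_i\sim\Exp(i)$ and the tail bound of \cref{exptail}, and then transfers to the uniform model through the monotone coupling $W=-\ln(1-U)$ and convexity of $-\ln(1-x)$. (Minor point: \cref{lemma:BinDev} as stated gives only the lower tail, so you do need its standard upper-tail companion, which is indeed available from \cite{Janson2001}.)

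The genuine gap is the exponential model for $k$ of order $n$, and especially $k=n-o(n)$ -- exactly the regime you flag as the main obstacle. Your per-$k$ bound $2\exp(-\Omega(\eps^2 k))$ is false there: for $k=n-1$, $\Wok$ is the maximum of $n-1$ exponentials, $\E\Wok\sim\ln n$, and $\Pr\big(\Wok>(1+\eps)\E\Wok\big)=1-\big(1-n^{-(1+\eps)+o(1)}\big)^{n-1}=n^{-\eps+o(1)}$, which is only polynomially small, nothing like $\exp(-\Omega(\eps^2 n))$. The step that breaks is the comparison of $1-e^{-x}$ with $x$: the threshold $\tau_k=(1+\eps)\E\Wok$ is not $O(k/n)=o(1)$ once $k=\Theta(n)$ -- it is $\Theta(\ln(n/\kbar))$ with $\kbar=n-k$, up to $\Theta(\ln n)$ -- so $1-e^{-\tau_k}$ is not $(1\pm o(1))\tau_k$, and the mean of the counting binomial, $(n-1)(1-e^{-\tau_k})\approx n\big(1-(\kbar/n)^{1+\eps}\big)$, is no longer at least $(1+\eps/2)k$ when $\kbar=o(n)$ (for $\kbar=1$ it is below $n$, while $(1+\eps/2)k>n$). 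The lemma itself survives: working with the exact quantile and counting the weights \emph{above} the threshold, the upper-tail failure probability at $k=n-\kbar$ is roughly $\big(e(\kbar/n)^{\eps}\big)^{\kbar}$ for $\kbar=o(n)$, and these terms decay fast enough in $\kbar$ to sum to $n^{-\Omega(1)}$ -- which is precisely the two-range analysis the paper carries out (there via \cref{exptail}, whose exponent $(n-k)\mu_k\approx\kbar\ln(n/\kbar)$ remains at least of order $\ln n$ even at $\kbar=1$, so the ``direct-sum bound'' you propose to abandon does not in fact fail). As written, though, your proof asserts a quantitatively wrong tail bound and justifies the large-$k$ exponential case with an invalid linearisation, and the needed replacement computation is not supplied.
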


\begin{proof}

Without loss of generality, we may assume that $a \leq n/10$.

\textbf{Exponential case}.
It is standard that, where $Z_i \sim \Exp(i)$ are independent exponential r.v.s, we may generate the $\Wok$ as
\begin{align}
	\Wok &= \sum_{i=1}^{k} Z_{n-i} . \label{ordersumexp}
\end{align}
Using a superscripted $E$ to highlight the exponential model, $\Wok$ has mean
\begin{align} \label{muX}
 \mu_k = \mu_k\SE \coloneqq \E \Wok
  &=     \sum_{i=1}^k \frac{1}{n-i} = H(n-1)-H(n-k-1)
         \asymp \ln(n)-\ln(n-k) ;
\end{align}
the change by 1 in the logarithms' arguments avoids $\ln(0)$
when $k=n-1$ and remains asymptotically correct.

By~\cref{exp.3},
	\begin{align}
	\Prob( \abs{ \Wok - \mu_k} \geq \eps \mu)
	& \leq
	2 \exp \parens{-\Omega((n-k) \mu_k)}
	.\label{eq:ExpSum}
	\end{align}
By the union bound, it suffices to show that the sum
over $k$ from $a$ to $n-1$ of the RHS of~\cref{eq:ExpSum} is $o(1)$.
We treat the sum in two ranges.
For $k \leq {\frac n2}$,
$(n-k)\mu_k \geq \frac n 2 \cdot \frac k n = \frac k 2$. Thus,
	\begin{align}
	\sum_{k=a}^{{\lfrac n2}} \exp \parens{-\Omega((n-k) \mu_k)}
	\; \leq \; \sum_{k=a}^{{\lfrac n2}} \exp \parens{- \Omega(k)}
	\; \leq \; O(a e^{-\Omega(a)})
	\to 0,
	\end{align}
	since $a = \omega(1)$.
For $k > \frac n 2$, for brevity let $\kbar=n-k$.
Then $\mu_k \asymp \ln n-\ln(\kbar)$ by~\cref{muX}
and
\begin{align}   \notag
\sumkk \exp\parens{ -\Omega((n-k) \mu_k) }
&= \sumkk \exp \parens{ - \kbar \, \Omega(\ln n-\ln(\kbar))}
= \sumkk \parens{\frac{\kbar}{n}}^{\Omega(\kbar)}
\\ & \leq
\parens{\frac1n}^{\Omega(1)} \sumkk
\kbar^{\Omega(1)}
\parens{\frac \kbar n}^{\Omega(\kbar-1)}
 = n^{-\Omega(1)} = o(1) ,
\end{align}
where the explicit inequality factors out the $\kbar=1$ term,
from which, since $\kbar/n \leq 1/2$, the later terms decrease geometrically.
This concludes the exponential case.

\medskip

\textbf{Uniform case}:
Let $U_i \sim U(0,1)$ be \iid uniform random variables and
$W_i \sim \Exp(1)$ \iid exponential random variables.
Because the exponential distribution has CDF $F(x) = 1-\exp(-x)$,
we may couple the two sets of variables as $U_i = F(W_i)$
or equivalently $W_i = f(U_i)$ with $f(x) = F^{-1}(x) = -\ln(1-x)$.
Because $f$ is increasing,
$\Wok = f(\Uok)$.
Now using superscript $U$ to distinguish the uniform model,
the mean is well known to be
\begin{align}\label{muU}
 \mu_k = \mu_k\SU & \coloneqq \E U_{(k)} =  \frac kn
\end{align}

We want to show that with high probability, for all $k$ in the range $a \leq k \leq n-1$,
\[ (1-\eps) \mu_k\SU \leq U_{(k)} \leq (1+\eps) \mu_k\SU  \]
or equivalently,
\[ f\parens{ (1-\eps) \mu_k\SU} \leq \Wok \leq f\parens{ (1+\eps) \mu_k\SU} . \]
From the exponential case already proved,
taking error bound $\eps/2$,
we know that \whp, for all $k$,
\[ (1-\eps/2) \mu_k\SE \leq \Wok \leq (1+\eps/2) \mu_k\SE , \]
so it suffices to show that, for all $k$ (deterministically),
\[ f\parens{ (1-\eps) \mu_k\SU} \leq (1-\eps/2) \mu_k\SE
 \quad \text{ and } \quad
   f\parens{ (1+\eps) \mu_k\SU} \geq (1+\eps/2) \mu_k\SE . \]
This is so. Using~\cref{muU},~\cref{muX}, and convexity of $f$,
\[ f\left((1-\eps) \mu_k\SU \right)
 = f((1-\eps)k/n) \leq (1-\eps) f(k/n)
 = (1-\eps) \ln \left( \frac{n}{n-k} \right)
 \leq (1-\eps/2) \mu_k\SE ; \]
\[ f\left((1+\eps) \mu_k\SU \right)
 = f((1+\eps)k/n) \geq (1+\eps) f(k/n)
 = (1+\eps) \ln \left( \frac{n}{n-k} \right)
 \geq (1+\eps/2) \mu_k\SE . \]
\end{proof}

\section{Upper bound for large \tp{$k$}{k}, sketch}\label{largekUB}
\subsection{Introduction} \label{largekIntro}
To address larger values of $k$ we use a different construction,
generating \st paths of length 4.
A straightforward extension of the previous argument to this construction
would let us get up to $k=n-f(n)$ for an arbitrarily slowly growing function $f$,
but not to $k=n-1$ because it requires $\kp/\eps^2$ edges incident on each of $s$ and $t$
(thus requires that $\kp/\eps^2 \leq n-1$).

Getting all the way to $k=n-1$ requires a couple of additional ideas.
Again, we will introduce an adversary with a cost budget that with high probability exceeds the cost of the first $k$ cheapest paths.
First, we observe that much of the adversary's cost budget
must be spent on edges incident to $s$ and $t$,
leaving less to delete other edges,
thus allowing a smaller structure $R$ to be sufficiently robust.
In particular, the $k$ cheapest paths from $s$ to $t$
must use edges incident on $s$ of total weight at least
$\sum_{i=1}^k \Woi^s$ where
\begin{align} \label{Wkv}
 \Woi^v
\end{align}
is the cost of the $i$th cheapest edge incident to $v$.
(We may omit the superscript when it is either generic or clear from context.)
One technical detail is that,
where $R$ includes the $\kk$ cheapest edges incident to $s$,
we will control $\Wokk-\Wok$ directly,
using results on order statistics from \cref{sec:order-stat},
rather than through a high-probability upper bound on $\Wokk$
and a high-probability lower bound on $\Wok$.
Finally, it is no longer adequate to allow path costs to exceed their
nominal values by an $\eps=\Theta(1)$ factor,
as such large excesses would swell the adversary's budget too quickly,
so we more tightly control the excess cost of each path $P_k$ as a function of $k$
(and $n$, implicitly).

The details later will be clearer if we sketch the argument now,
with most details but without the calculations.
We will argue for $k$ from $n^{4/10}$ to $n-1$.
(We must start with some $k=o(n^{1/2})$ since that is as far as the ``small $k$''
argument extended,
and we need $k=\omega(n^{1/3})$ since below this the new construction's path costs
would exceed the $2k/n$ target.)

\subsection{Structure \tp{$R$}{R}} \label{structR}
\cref{figR2} illustrates the robust structure $R=\Rk$ after adversarial deletion
of root edges, as discussed in \cref{robustness} below.
The construction is based on parameters $\rr0 = \rr0(k)$ and $\eps_k$ to be defined later.
Start with $R$ consisting of just the vertices $s$ and $t$.
Add to $R$ the $\kk$ edges incident on $s$ of lowest cost,
and let $V_s$ be the set of opposite endpoints of these edges.
Do the same for $t$,
generating vertex set $V_t$.
Take $M \coloneqq V(G)\setminus \set{s,t}$ as a collection of ``middle vertices''.

Note that $V_s$, $V_t$, and $M$ may well have vertices in common,
but our analysis will use a subgraph of $R$
where the relevant subsets of these three sets are disjoint,
and it is easier to understand the construction
imagining them to be disjoint.
Add to $R$ each edge $e$
in $M \times V_s$ and $M \times V_t$
that is ``heavy but not too heavy'',
with cost $W(e) \in (\epsk, 2\epsk)$.
This concludes the construction of the structure $R$.
\begin{figure}
  \centering
  \vspace*{2cm}
  \includegraphics[width=0.6\textwidth]{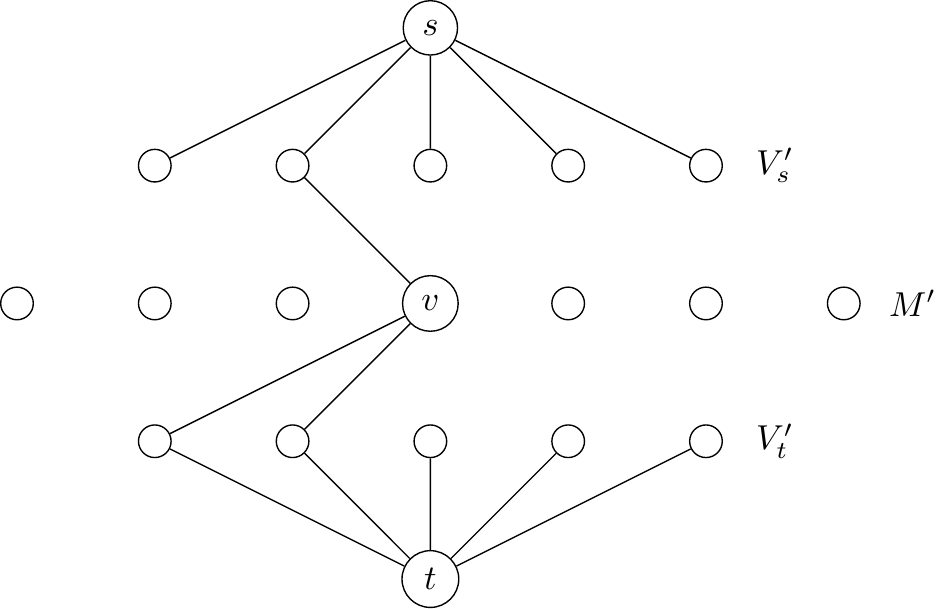}
  \caption{The robust structure $R=\Rk$ after adversarial deletion of $k$ edges
  on $s$, leaving $\rr0$ edges to some vertices $V'_s \subseteq V_s$,
  and likewise for $t$ and $V'_t$.
  The middle vertices are pruned to $M'=M\setminus (V_s'\cup V_t')$,
  and edges from $M'$ to $V_s'$ and $V_t'$ are in $R$
  if they have weight between $\eps_k$ and $2\eps_k$.
  Here, edges from just one representative vertex $v \in M'$ are illustrated.
  }\label{figR2}
\end{figure}

\subsection{Path weights} \label{pathweights}
It is immediate that every \st path in $R$ has cost at most
\begin{align}
	W^s_{(\kk)} +2\epsk+2\epsk+W^t_{(\kk)} . \label{path1}
\end{align}
We will show (in \cref{eq:Wkk0} for uniform and \cref{eq:expWkk0} for exponential) that, subject to the non-occurrence of certain unlikely failure events,
\cref{path1} is at most
\begin{align}
  \Wokp^s+\Wokp^t+7\epsk .  \label{path+}
\end{align}
We will show in \cref{robustness} that,
after deletion of the first $k$ paths, there remains an \st path in $R$
(again subject to non-occurrence of unlikely failure events),
whereupon it follows that
\begin{align}
  X_{k+1} & \leq \Wokp^s+\Wokp^t+7\epsk . \label{XkWok}
\end{align}

\subsection{Adversary} \label{subadv}
We define an adversary who is ``sufficiently strong'' to delete the first $k$ paths.
For $k \leq n^{4/10}$, taking $\eps=0.1$, \cref{ksucceeds} implies that
w.p.\
\begin{align}
  1-O(n^{-1.9}) -O(\exp(-\Omega(n^{4/10})))
   &= 1-O(n^{-1.9})
   = 1-o(1),  \label{Pn0.4}
\end{align}
we have that
\begin{align}
X_k &\leq
X_{n^{4/10}}
 \leq 3 n^{4/10} / n . \label{Xn0.4}
\end{align}
For $k>n^{4/10}$, further assume the absence of the failure events alluded to just above, so that \cref{XkWok} holds.
Then, hiding a sum of the $\ln n/n$ terms of~\cref{Xkbounds} in the $o(\,)$ term below,
\begin{align}
 \sum_{i=1}^{k} X_i
  &=   \sum_{i=1}^{n^{4/10}} X_i + \sum_{i=n^{4/10}+1}^k X_i \notag
  \\ &\leq
  \frac{3{(n^{4/10})}^2}{n} + \sum_{i=n^{4/10}+1}^{k} X_i \notag
  \\ &\leq
	\Ukbaseval + \sum_{i=n^{4/10}+1}^{k} \parens{\Woi^s+\Woi^t+7\eps_{i-1}} \notag
  \\ & =:  U_k .  \label{Ukdef}
\end{align}
Thus, the first $k$ paths' edges have total weight at most $U_k$.

Furthermore, the first $k$ paths' edges incident on $s$ and $t$
are all distinct except, possibly, for the edge \ste.
Therefore, not counting edge \st at all,
the cost of these ``incident'' edges is at least
\begin{align} \label{Ik}
I_k & \coloneqq \sum_{i=1}^{k-1} \parens{ \Woi^s+\Woi^t } .
\end{align}
(In proving \cref{expkbig} we will use a slightly different lower bound
$I_k$ on the weight of the incident edges.)

It follows that the first $k$ paths' ``middle edges'' (edges other than the incident edges)
cost at most $U_k-I_k$.
We will explicitly define a budget $B_k$ satisfying
\begin{align}\label{budget}
  B_k & \geq U_k-I_k .
\end{align}

We will allow the adversary to delete
any $k$ edges in $G$ incident on each of $s$ and $t$,
possibly including the edge \st
(enough to let it delete the incident edges of the first $k$ paths),
and to delete any other edges in $G$ of total cost at most $B_k$
(enough to let it delete the middle edges of the first $k$ paths).
Thus, the adversary is sufficiently strong to delete the first $k$ paths.

The adversary's allowable deletions in $G$ mean that also in $R$
it deletes at most $k$ edges incident on each of $s$ and $t$,
and middle edges of total cost at most $B_k$.

\subsection{Budgets \tp{$B_k$}{B\_k}} \label{budgets}
The budgets $B_k$ will be defined explicitly in the details.
For the model with uniformly distributed edge weights
we will do so in two ranges of $k$,
corresponding to \cref{kmedium,kbig},
and likewise in the model with exponentially distributed edge weights,
corresponding to \cref{expkmedium,expkbig}.
For \cref{kbig,expkbig} we will establish \cref{budget} directly.

For \cref{kmedium,expkmedium} we will establish \cref{budget}
by the following reasoning;
we will only need to check \cref{Bksuff}, \cref{Bkbase}, and \cref{epsfit} below.
We will show that the budgets satisfy
\begin{align}\label{Bksuff}
  B_\kp
   &\geq B_k + 8\epsk .
\end{align}
(Roughly speaking, given $B_k$ we will set $\eps_k$ as small as possible
while keeping $\Rk$ robust to the adversary with budget $B_k$.
Then, we will set $B_\kp$ as small as possible,
namely by taking equality in \cref{Bksuff}.
Behind the scenes, we derive $B_k$ by solving the differential-equation
equivalent of \cref{Bksuff} satisfied with equality.)

We will show that \cref{budget} is satisfied in the base case,
by showing that
\begin{align} \label{Bkbase}
 B_k &\geq U_k \quad \text{for $k=n^{4/10}$} .
\end{align}
Then, \cref{budget} is established for all $k$ by induction on $k$:
\begin{align}
  U_\kp-I_\kp
   &= (U_\kp-U_k)-(I_\kp-I_k)+ (U_k-I_k) \notag
  \intertext{which by \cref{Ukdef}, \cref{Ik}, and the inductive hypothesis \cref{budget} is}
   & \leq (\Wo \kp^s+\Wo \kp^t+7\epsk)-(\Wok^s+\Wok^t) + B_k \notag
   \\ & \leq B_k + 8\epsk \eqnote{see below} \label{epsfit}
   \\ & \leq B_\kp  \eqnote{by \cref{Bksuff}} \label{Bkp} .
\end{align}
To justify \cref{epsfit} it suffices to show that $\Wokp-\Wok$
is at most $0.1 \eps_k$,
and we do so
in \cref{epsfitpf} for the uniform case and
in \cref{expepsfitpf} for the exponential case.
In both cases, $\rr0=\omega(1)$,
and $\Wo\kk-\Wokp=O(\eps_k)$
(as used in going from \cref{path1} to \cref{path+}),
making this conclusion unsurprising.%
\footnote{In proving \cref{kbig,expkbig} we will set $\rr0=1$,
so this reasoning does not apply.
Indeed, in \cref{expkbig} (the large-$k$ exponential case)
\cref{epsfit} would be false
--- $\Wokp-\Wok$ can be much larger than $\eps_k$ ---
but (to reiterate) it is not needed there,
as we establish \cref{budget} directly.
}

\subsection{Robustness of \tp{$R$}{R}} \label{robustness}
We wish to make $R$ robust against the adversary,
so that after the deletions just described,
$R$ should retain an \st path \whp,
so that \cref{XkWok} holds and $X_\kp$ is small.
It will suffice to show that, to delete all \st paths in $R$,
\begin{align}\label{robustblurb}
  \parbox{0.8 \textwidth}{\emph{after deletion of $k$ edges incident on each of $s$ and $t$,
  an adversary would still have to delete middle edges of total cost more than $B_k$,}}
\end{align}
and thus it is powerless to do so.

Obtaining this robustness requires choosing $\eps_k$ sufficiently large in the construction.
With reference to \cref{figR2},
on deletion of any $k$ edges on each of $s$ and $t$,
the level-1 sets are in effect pruned to $V'_s$ and $V'_t$,
each of cardinality $\rr0$.
Should $V'_s$ and $V'_t$ have vertices in common,
or if $t \in V'_s$ or $s \in V'_t$,
then there is an \st path.
So, assume that $V'_s$ and $V'_t$ are disjoint and do not contain $s$ nor $t$.
Consider only middle vertices $M' \subseteq M$ not appearing in $V'_s$ nor $V'_t$,
i.e., $M'=M \setminus \set{V'_s \cup V'_t}$.
We will have $\rr0=o(n)$, so
$\card{M'} = n-2-2\rr0 > 0.99n$.
Note that edges in $M' \times V'_s$, $M' \times V'_t$,
$\set s \times V'_s$, and $\set t \times V'_t$
are all distinct.

Consider a choice of the $k$ deletions on $s$ and $t$ to be fixed in advance.
(We will eventually take a union bound over all such choices.)
The weights of edges in $M' \times V'_s$ and $M' \times V'_t$
have not even been observed yet,
so each has (unconditioned) $U(0,1)$ distribution,
all are independent (by distinctness of the edges),
and thus each such edge is included in $R$
with probability $\epsk$, independently.

A vertex $v \in M'$ is connected to $V'_s$
by
\begin{align} \label{Zvs}
Z_v^s & \sim \Bi(\rr0,\epsk)
\end{align}
edges, with mean
\begin{align}\label{kmedlambda}
 \lambda \coloneqq \E Z_v^s = \rr0 \epsk .
\end{align}
Define $Z_v^t$ symmetrically, and note that $Z_v^s$ and $Z_v^t$ are \iid.
Intuitively, if $\lambda$ is small,
$Z_v^s$ is usually 0, is 1 with probability about $\lambda$,
and rarely any larger value.
So, the probability that $v$ is connected to both $V'_s$ and $V'_t$
is about $\lambda^2$,
in which case to destroy \st paths through $v$
the adversary must delete an edge of cost at least $\eps_k$.
So, to delete all \st paths, over the nearly $n$ vertices in $M'$
the adversary would have to delete edges of expected total weight at least
\begin{align}
\eps_k \, n \, \lambda^2 \label{kmedtotalweight}
 .
\end{align}
We will choose $\eps_k$ so that
\begin{align} \label{epsk}
 \eps_k \, n \, \lambda^2 > B_k ,
\end{align}
which hopefully will ensure (see \cref{failprob})
that a path must remain (i.e., that $R$ is robust).

Let us give a back-of-the-envelope calculation.
In the uniform case we expect $\Wok$ to be about $k/n$,
so letting $\rr0= \eps_k n$ means that $\Wokk-\Wok$ will be about $\eps_k$,
justifying \cref{path+}.
Then \cref{kmedlambda} gives $\la=\epsk^2 n$,
so \cref{epsk} indicates that we need to take
$\epsk^5 n^3 > B_k$.
As noted after \cref{Bksuff}, roughly speaking,
we obtain $B_k$ and $\epsk$
by solving this and \cref{Bksuff} with equality as a system of differential equations.

\begin{remark} \label{failprob}
This intuitive argument proves to be essentially sound,
but to make it rigorous will take some work.
Chiefly, $\Pr(Z_v^s > 0)$ is of course not exactly $\E Z_v^s = \lambda$
even when $\lambda$ is small, and we will also have to consider the case
when $\lambda$ is large.
Also, where the intuition is based on expectations,
we must calculate the probability of the ``failure'' event that
all paths can be deleted at a cost less than $B_k$.
Finally, we must take the union bound of this failure event over all choices
of root edges at $s$ and $t$
(but, as in the small-$k$ case, this turns out to change nothing).
\end{remark}

\section{Upper bound for large \tp{$k$}{k}, uniform model} \label{unifUB}
In this section
we fill in the details of the steps
from \cref{largekUB}
and show that they conclude the proof of the upper bound in \cref{Tmain}.
Specifically, to control the \emph{path weights}
(these emphasised keywords match section titles)
we must show that
\cref{path1} is at most \cref{path+}.
For the \emph{adversary} we need only show \cref{budget};
as noted earlier, for large $k$ (\cref{kbig}) we will do this directly,
while for medium $k$ (\cref{kmedium}) we will argue
that the \emph{budgets} $B_k$ satisfy \cref{Bkbase} and \cref{epsfit}.
And for \emph{robustness} we will prove that the
probability of failure is small
(i.e., it is unlikely that the adversary can destroy all \st paths in $\Rk$).

\subsection{Claims, and implications for \cref{Tmain}}\label{uniclaims}

We first state the two precise claims we make for large $k$, in two ranges.
We use symbolic constants $\CB$, $\Ce$, $C'_B$, and $C'_\eps$
in the claims and the proofs,
as it makes the calculations clearer.
Whenever we encounter an inequality that the constants must satisfy,
we will highlight with a parenthetical ``check'' that they do so.

\begin{claim}\label{kmedium}
For $k \in [n^{4/10}, n-14\sqrt n \,]$,
let $\Bk = \parens{\CB k n^{-3/5}+{(2n^{-1/5})}^{4/5}}^{5/4}$
and $\epsk =\Ce n^{-3/5} {\Bk}^{1/5}$,
with $\CB=32$ and $\Ce=5$.
Then, asymptotically almost surely,
simultaneously for all $k$ in this range,
\begin{equation}\label{eq:XkWk}
	X_\kp \leq \Wokp^s + \Wokp^t + 8 \epsk.
\end{equation}
\end{claim}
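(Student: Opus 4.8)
The plan is to instantiate the generic machinery of \cref{largekUB} with the specific budget $\Bk = \parens{\CB k n^{-3/5}+{(2n^{-1/5})}^{4/5}}^{5/4}$ and parameter $\epsk =\Ce n^{-3/5} {\Bk}^{1/5}$, and verify the three conditions \cref{Bkbase}, \cref{epsfit}, and the robustness bound \cref{robustblurb}. Once these are in place, \cref{eq:XkWk} follows from \cref{XkWok} exactly as outlined in \cref{pathweights} and \cref{subadv}, together with a union bound over $k$ in the stated range. So the work breaks into four parts: (i) confirm \cref{path1} $\le$ \cref{path+}; (ii) check the base case \cref{Bkbase}; (iii) check the inductive step, i.e.\ \cref{Bksuff} and \cref{epsfit}; (iv) establish robustness with a small failure probability.

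For part (i), set $\rr0 = \eps_k n$ (up to rounding) so that $V_s$ consists of the $\kk$ cheapest edges on $s$; then I must bound $\Wokk^s - \Wokp^s$. Rather than separately upper-bounding $\Wokk$ and lower-bounding $\Wok$, I will control the gap directly via the order-statistics results of \cref{sec:order-stat}: in the uniform model $\E U_{(k)} = k/n$, and \cref{lemma:edge-orderstat} gives concentration of $\Wok$ to within $1\pm\eps'$ for $k=\omega(1)$, so $\Wokk - \Wokp = O(\rr0/n) = O(\epsk)$ with the appropriate constant. Combined with the two $\le 2\epsk$ middle-edge bounds and the $\Wokk^s+\Wokk^t$ from \cref{path1}, this gives \cref{path+} with the constant $7$, checking that $\CB,\Ce$ absorb the hidden constants. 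For part (ii), \cref{Bkbase} requires $\Bk \ge U_k$ at $k=n^{4/10}$; here $U_{n^{4/10}} = \Ukbaseval = 3n^{-2/10}$ by \cref{Ukdef}, and plugging $k=n^{4/10}$ into the formula for $\Bk$ the dominant term is ${(2n^{-1/5})}^{4/5\cdot 5/4} = 2n^{-1/5}$, so $\Bk \gtrsim 2n^{-1/5} < 3n^{-2/10}$ — wait, $n^{-1/5}=n^{-2/10}$, so one checks the constant works out (the $\CB k n^{-3/5}$ term at $k=n^{4/10}$ is $\CB n^{-1/5}$, comparable), giving $\Bk \ge 3n^{-1/5}$ for $\CB=32$. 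For part (iii), \cref{epsfit} reduces (as the excerpt notes) to $\Wokp - \Wok \le 0.1\epsk$, which again follows from the order-statistics concentration since $\Wokp-\Wok = O(1/n) = o(\epsk)$ for $k$ in range; and \cref{Bksuff}, namely $B_\kp \ge B_k + 8\epsk$, is a calculus check: differentiating $\Bk = g(k)^{5/4}$ with $g(k)=\CB k n^{-3/5}+c$ gives $B'_k = \tfrac54 g'(k) g(k)^{1/4} = \tfrac54 \CB n^{-3/5} {\Bk}^{1/5}$, so $B'_k = \tfrac{5\CB}{4\Ce}\cdot 8\epsk / 8 \cdot \dots$ — concretely one verifies $\tfrac54\CB n^{-3/5}{\Bk}^{1/5} \ge 8\epsk = 8\Ce n^{-3/5}{\Bk}^{1/5}$, i.e.\ $\tfrac54\CB \ge 8\Ce$, i.e.\ $40 \ge 40$ (check, with the right rounding slack), so the discrete step \cref{Bksuff} follows from convexity/monotonicity of $\Bk$ with a tiny margin.

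Part (iv), robustness, is the main obstacle and where the real probabilistic work lies. After the adversary deletes $k$ root edges on each of $s$ and $t$, we have $V'_s, V'_t$ of size $\rr0 = \epsk n$ and $\card{M'} > 0.99n$; the surviving $M' \times V'_s$ and $M' \times V'_t$ edges are unconditioned $U(0,1)$, hence each lies in $R$ independently w.p.\ $\epsk$. With $\lambda = \rr0\epsk = \epsk^2 n$, a vertex $v \in M'$ is ``doubly connected'' (has an edge to each side) with probability $\Theta(\min(\lambda^2,1))$; to kill the path through such a $v$ the adversary must delete an edge of cost $\ge \epsk$. I need to turn the expectation heuristic $\epsk n \lambda^2 > B_k$ (which by the choice $\epsk^5 n^3 = \Theta(B_k)$ holds) into a statement that \emph{with high probability over the middle edges} no adversarial deletion of cost $\le B_k$ kills all paths, and then union bound over the $2^{O(?)}$ choices of root deletions. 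The cleanest route: let $N$ be the number of doubly-connected vertices in $M'$; then $N$ stochastically dominates a binomial with mean $\Omega(n\lambda^2)$ (when $\lambda = O(1)$; the $\lambda$ large case must be handled separately as flagged in \cref{failprob}), so by \cref{lemma:BinDev} $N \ge \tfrac12 \E N$ w.p.\ $1 - \exp(-\Omega(n\lambda^2))$; on that event the adversary must delete $\ge N$ edges of cost $\ge \epsk$ each, total $\ge \tfrac12 \epsk n \lambda^2 > B_k$, contradiction. The union bound over root-deletion choices costs a factor $\binom{\kk}{k}^2 \le 2^{O(\rr0)} = 2^{O(\epsk n)}$, which must be dominated by $\exp(-\Omega(n\lambda^2)) = \exp(-\Omega(\epsk^2 n^2))$; since $\epsk^2 n^2 / (\epsk n) = \epsk n \to \infty$ in the range, the union bound is absorbed. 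The delicate points — and the reason this is the hard part — are (a) handling the regime where $\lambda$ is not small, where $\Pr(Z_v^s>0)$ is bounded away from $0$ and the $\lambda^2$ heuristic is replaced by $\Theta(1)$, requiring a separate but easier count; (b) making the domination of $N$ by a binomial rigorous despite the mild dependence introduced by sharing middle vertices; and (c) getting the constants $\CB=32,\Ce=5$ to make \cref{epsk} hold with enough slack that the $\tfrac12$ loss in the concentration step and the $O(\rr0)$ in the union bound are both absorbed. Finally, summing the per-$k$ failure probabilities $\exp(-\Omega(\epsk n))$ over $k \in [n^{4/10}, n-14\sqrt n]$ gives $o(1)$ since $\epsk n = \omega(\ln n)$ throughout (in particular $\epsk n \gtrsim n^{?}$), yielding the ``simultaneously for all $k$'' conclusion and completing the proof of \cref{eq:XkWk}.
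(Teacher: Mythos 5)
Your overall plan (instantiate the \cref{largekUB} framework, check \cref{Bkbase}, \cref{Bksuff}, \cref{epsfit}, then prove robustness and union-bound over $k$ and over root deletions) is the paper's plan. But part (i), and the same device reused in part (iii), contains a genuine gap: you control the order-statistic gaps via the multiplicative concentration of \cref{lemma:edge-orderstat}, claiming $\Wokk-\Wokp=O(\rr0/n)=O(\epsk)$ and $\Wokp-\Wok=O(1/n)$. That does not follow. Concentration of $\Wok$ and $\Wokk$ to within $(1\pm\eps')$ of their means $k/n$ and $(\kk)/n$ only bounds the difference by $\rr0/n+\eps'\cdot O(k/n)$, and for $k=\Theta(n)$ the error term $\eps' k/n=\Theta(\eps')$ is a constant, vastly larger than $\epsk=\Theta(k^{1/4}n^{-3/4})=\Theta(n^{-1/2})$ near the top of the range; the lemma is stated for fixed $\eps'$ and cannot be pushed to $\eps'=o(n^{-1/2}n/k)$. (Likewise $\Wokp-\Wok$ is not $O(1/n)$ simultaneously for all $k$; the maximal spacing is of order $\ln n/n$.) This is exactly the pitfall the paper flags in \cref{largekIntro}: one must control $\Wokk-\Wok$ \emph{directly}, not through separate upper and lower bounds on the two order statistics. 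The paper's route is \cref{intervals}: viewing the $n-1$ edge weights at $s$ as uniform points in $[0,1]$, \whp every interval of length at least $n^{-0.99}$ contains at least a $0.99$ fraction of its expected number of points; since $\epsk\geq 1.14\Ce n^{-16/25}$ by \cref{eq:eklower}, the interval $(\Wokp,\Wokp+1.1\epsk)$ contains at least $\epsk n=\rr0$ points, giving \cref{eq:Wkk0}, and the same argument with intervals of length $0.1\epsk$ gives \cref{epsfitpf}. Without this (or an equivalent spacing argument) your verification of \cref{path+} and of \cref{epsfit} fails precisely in the regime $k=\Theta(n)$ that the claim must cover.

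Two smaller points on part (iv). The robustness sketch is the right idea, but your quantities are off: with $\la=\epsk^2 n$ the relevant deviation scale is $n\la^2=\epsk^4 n^3$ (not $\epsk^2 n^2$), and the union bound over root deletions costs $\binom{\kk}{\rr0}^2\leq n^{2\rr0}=\exp(O(\epsk n\ln n))$ (not $2^{O(\epsk n)}$); the comparison still works, since $\epsk^3 n^2/\ln n=\Omega(n^{2/25}/\ln n)\to\infty$ by \cref{eq:eklower}, but the check you wrote is not the right one. Also, the case $\la\geq 2$ is not optional padding: for $k$ near the top of the range $\la=\epsk^2 n$ is large, and the paper handles it with the binomial-median bound of \cref{lemma:BinMin}(1), giving cost at least $0.16\epsk^3 n^2>B_k$; your $\la^2$ heuristic simply does not apply there. (Your worry (b) about dependence among the $Z_v$ is a non-issue: the edge sets $\set{v}\times V'_s$ and $\set{v}\times V'_t$ over $v\in M'$ are pairwise disjoint and unobserved, so the $Z_v$ are independent.)
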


\noindent\textbf{Remark:}
In proving \cref{kmedium} we will set
\begin{align}\label{k0med}
  \rr0 & \coloneqq \eps_k n .
\end{align}
From the definitions of $B_k$ and $\epsk$ in \cref{kmedium}, both are increasing in $k$,
and we will make frequent use of the following inequalities. For $n$ sufficiently large,
\newline
\noindent
\begin{minipage}[t]{.5\textwidth}
\begin{align}
	\Bk &= \Theta \parens{ k^{5/4} n^{-3/4} + n^{-{1/5}}} \label{eq:Bk} \\
	B_k &\leq B_n \leq 1.01 \CB^{5/4} n^{1/2} \label{eq:Bkupper}\\
	B_k &\geq B_{n^{4/10}} \geq 2n^{-{1/5}} \label{eq:Bklower}
\end{align}
\end{minipage}%
\begin{minipage}[t]{.5\textwidth}
\begin{align}
    \epsk &= \Theta \parens{ k^{1/4} n^{-3/4}+ n^{-{16/25}} } \label{eq:ek} \\
	\epsk &\leq \eps_n \leq 1.01 \Ce \CB^{1/4} n^{-1/2} \label{eq:ekupper} \\
	\epsk &\geq \eps_{n^{4/10}} \geq 1.14 \Ce n^{-{16/25}} \label{eq:eklower}
    .
\end{align}
\end{minipage}%

\begin{claim}\label{kbig}
For $k \in ( n-14\sqrt n, n-2]$,
let
\begin{align}
 \Bp&=\CB' \sqrt n \quad \text{and} \quad \epsp = C'_\eps n^{-1/6} \label{kbigparams}
\end{align}
with $C'_B=78$ and $C'_\eps=5$.
Then, asymptotically almost surely,
simultaneously for all $k$ in this range,
\[ X_\kp \leq \Wokp^s + \Wokp^t + 8 \epsp. \]
\end{claim}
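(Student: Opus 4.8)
The plan is to run the scheme of \cref{largekUB} with the structure $R=\Rk$ of \cref{structR} built using $\rr0=1$, so that $R$ consists of the $\kk=k+1$ cheapest edges incident on $s$ (opposite endpoints forming $V_s$), the analogous $k+1$ edges on $t$ (endpoints $V_t$), and every edge between $M=V(G)\setminus\set{s,t}$ and $V_s\cup V_t$ of weight in $(\epsp,2\epsp)$. As in \cref{unifUB} there are three things to check (\emph{path weights}, \emph{adversary}, \emph{robustness}); the \emph{budgets} step of \cref{budgets} is bypassed because $\rr0=1$, so we establish \cref{budget} directly. The \emph{path weights} point is immediate here: since $\rr0=1$, every \st path in $R$ uses exactly one edge at $s$ (cost $\le\Wokp^s$), two middle edges (each of cost $<2\epsp$), and one edge at $t$ (cost $\le\Wokp^t$), hence has cost at most $\Wokp^s+\Wokp^t+4\epsp\le\Wokp^s+\Wokp^t+8\epsp$; so if some \st path survives the adversarial deletions below, the bound in the claim follows.

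\emph{Adversary.} I would verify $\Bp\ge U_k-I_k$ by hand. Substituting \cref{Ukdef} and \cref{Ik}, the order-statistic sums nearly all telescope, leaving
\begin{align*}
  U_k-I_k &= \Ukbaseval + \parens{\Wok^s+\Wok^t} - \sum_{i=1}^{n^{4/10}}\parens{\Woi^s+\Woi^t} + 7\sum_{i=n^{4/10}+1}^{k}\eps_{i-1} .
\end{align*}
Here $\Ukbaseval=o(1)$, $\Wok^s+\Wok^t\le2$ (each uniform order statistic is $\le1$), and the head sum is non-negative, so these three terms contribute $O(1)$. In the last sum, split at $n-14\sqrt n$: over the medium range, telescoping the inequality \cref{Bksuff} gives $8\sum_i\eps_i\le B_{n-14\sqrt n}-B_{n^{4/10}}\le1.01\,\CB^{5/4}\sqrt n$ by \cref{eq:Bkupper}, so that part is $\le\tfrac78\cdot1.01\,\CB^{5/4}\sqrt n$; over the big range there are fewer than $14\sqrt n$ terms, each at most $\epsp=O(n^{-1/6})$ by \cref{eq:ekupper}, contributing $O(n^{1/3})=o(\sqrt n)$. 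Hence $U_k-I_k\le\tfrac78\cdot1.01\cdot32^{5/4}\,\sqrt n+o(\sqrt n)$, and since $\tfrac78\cdot1.01\cdot32^{5/4}\approx67.3<78$ this is $\le C'_B\sqrt n=\Bp$ for large $n$ (the ``check'' on $C'_B=78$). So the adversary --- permitted $k$ deletions at each of $s,t$ and middle-edge deletions of total cost $\le\Bp$ --- can delete the first $k$ paths, and it remains to prove \cref{robustblurb}.

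\emph{Robustness.} Fix any admissible set of adversarial deletions at $s$ and at $t$. Since $\rr0=1$, these leave single surviving edges $\set{s,v_s}$ and $\set{t,v_t}$; if $v_s=v_t$, or $v_s\in V_t$, or $v_t\in V_s$, there is already an \st path of length $\le2$ in $R$, so assume otherwise and set $M'=M\setminus\set{v_s,v_t}$, so $\card{M'}\ge0.99n$. The selection of root edges conditions only edges incident on $s$ and $t$, so the edges $\set{v_s,w}$ and $\set{w,v_t}$, over $w\in M'$, are still unconditioned \iid $U(0,1)$ and pairwise distinct; hence each $w$ is a ``good connector'' --- meaning both these edges lie in $R$, i.e.\ have weight in $(\epsp,2\epsp)$ --- independently with probability $\epsp^2$. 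The number $N$ of good connectors therefore dominates $\Bi(0.99n,\epsp^2)$, with mean $\ge0.99(C'_\eps)^2n^{2/3}=\omega(1)$, so by \cref{lemma:BinDev}, $N\ge0.99\,\E N$ with probability $1-\exp(-\Omega(n^{2/3}))$. To destroy every \st path in $R$ the adversary must in particular destroy each length-$4$ path $s,v_s,w,v_t,t$ through a good connector $w$, which costs it more than $\epsp$ (it must delete one of the two distinct edges $\set{v_s,w},\set{w,v_t}$, each of weight $>\epsp$, and $\set{s,v_s},\set{v_t,t}$ are unavailable), for a total exceeding $N\epsp\ge0.99^2(C'_\eps)^2n^{2/3}\cdot\epsp=0.99^2\cdot125\,\sqrt n>78\sqrt n=\Bp$ --- contradicting its budget. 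Thus $R$ is robust for this deletion set; a union bound over the at most $(k+1)^2\le n^2$ admissible deletion sets, and over the fewer than $14\sqrt n$ values of $k$ in the range (rebuilding $\Rk$ each time), leaves total failure probability $o(1)$.

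\emph{Conclusion and main obstacle.} On the a.a.s.\ event that the bounds already established in \cref{unifUB} for $k\le n-14\sqrt n$ give $\sum_{i\le n-14\sqrt n}X_i\le U_{n-14\sqrt n}$ (the base case), and that $\Rk$ is robust for every $k\in(n-14\sqrt n,n-2]$, the induction of \cref{budgets} --- using $\Bp\ge U_k-I_k$ to conclude that the adversary dominates the true deletions, hence a path survives in $\Rk$, hence \cref{XkWok}, hence $X_\kp\le\Wokp^s+\Wokp^t+8\epsp$ --- runs for all $k$ in the range simultaneously, proving the claim. I expect the main obstacle to be purely quantitative: one must pin the single constant $C'_B=78$ so that it exceeds the telescoped medium-range budget (about $67.3\sqrt n$, the lower constraint coming from \cref{budget}) yet stays below the robustness threshold $0.99^2\cdot125\,\sqrt n\approx122.5\sqrt n$, while checking that $C'_\eps=5$ is large enough for robustness (i.e.\ $n\epsp^3$ beats $\Bp$ with comfortable Chernoff slack), small enough not to disturb the budget bound via \cref{eq:ekupper}, and that every $o(\sqrt n)$ error term is genuinely negligible.
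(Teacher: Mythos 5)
Your proposal is correct and takes essentially the same approach as the paper: the same structure $R$ with $\rr0=1$ and path cost $\le \Wokp^s+\Wokp^t+4\epsp$, a direct verification of \cref{budget} for this range, and the identical robustness argument (connectors $Z_v\sim\Bern(\epsp^2)$, domination by $\Bi(0.99n,\epsp^2)$, a Chernoff bound, the constant check $(C'_\eps)^3\cdot 0.98>C'_B$, and a union bound over the $(k+1)^2\le n^2$ root-edge choices and the $O(\sqrt n)$ rounds). The only differences are cosmetic: your exact telescoping of $U_k-I_k$ (using $\Wok^s+\Wok^t\le 2$ and \cref{Bksuff}) replaces the paper's route via $B_\kstar\le 77\sqrt n$ from \cref{eq:Bkupper} plus the $7\epsp$ increments, with the same arithmetic, and the degenerate cases to set aside before the connector argument are really $v_s=t$, $v_t=s$, or $v_s=v_t$ rather than $v_s\in V_t$, though your main argument covers that case unchanged.
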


\noindent\textbf{Remark:} In proving \cref{kbig} we will set $\rr0 \coloneqq 1$.
Note that here $B_k$ and $\eps_k$ are constants independent of $k$,
but we retain the subscript for consistency with the notation of \cref{largekIntro}.

We will prove the two claims shortly.

\medskip

\begin{proof}[Proof of the upper bound of \cref{Xkbounds} in \cref{Tmain}]
Given $\eps > 0$ from \cref{Tmain}, apply \cref{lemma:edge-orderstat} to the order statistics $\Wok^s$ and $\Wok^t$ with $\eps$ in the lemma as our $\eps/2$ and $a=n^{4/10}$. Then by \cref{kmedium} \whp, simultaneously for all
$k \in [n^{4/10}+1, n-14\sqrt n]$,
\begin{equation}\label{eq:implies-main}
	X_\k \leq \Wok^s + \Wok^t + 8\eps_\km
     \leq (1+\eps/2) 2k/n + 8\eps_\km
     \leq (1+\eps) (2k/n + \ln n/n);
\end{equation}
the key point is that $\eps_{\km} \leq \epsk = o(k/n)$, which follows from~$\cref{eq:ek}$.
Specifically, by \cref{eq:ek}, $\epsk/(k/n) = O(k^{-3/4} n^{1/4}+k^{-1}n^{9/25})$,
which by $k \geq n^{4/10}$ is $O(n^{-0.3}n^{0.25}+n^{-4/10}n^{0.36})=o(1)$.

Likewise, by \cref{kbig},
inequality~\cref{eq:implies-main} holds
\whp simultaneously for all $k \in [n-14\sqrt n, n-2]$.
Again, we need only show that $\epsp=o(k/n)$,
which holds because here $k/n=\Theta(1)$ while by definition $\epsp = o(1)$.
\end{proof}

We now prove the two claims, by filling in the details
for \cref{structR,robustness}.

\subsection{Structure \tp{$R$}{R}}

With reference to \cref{structR},
all that we need to confirm is that $\kk \leq n-1$.
For \cref{kmedium},
by hypothesis $k \leq n-14 \sqrt n$,
and provided that $1.01 \Ce \CB^{1/4} \leq 13$
(check), by~\cref{eq:ekupper}
$\epsk \leq 13 n^{-1/2}$,
whereupon $\rr0 = \epsk n \leq 13 \sqrt n$.
For \cref{kbig}, with $\rr0=1$,
$\kk \leq n-1$ is immediate.

\subsection{Path weights} \label{pathweightsdetail}
With reference to \cref{pathweights},
we establish that the bound \cref{path+} holds \whp simultaneously for all $k \geq n^{4/10}$.
With $\Wok$ representing the cost of the $k$th cheapest edge incident on some fixed vertex (which we will take to be $s$ and then $t$ in turn), it suffices to show that
\begin{align}\label{eq:Wkk0}
\Wokk &\leq \Wokp + 1.1\epsk
\end{align}
holds with high probability for all $k \geq n^{4/10}$.

For \cref{kbig}, with $\rr0=1$, \cref{eq:Wkk0} is immediate.
For \cref{kmedium}, with $\rr0=\eps_k n$,
generate the variables $\Wok$ by placing $n-1$ points uniformly at random on the unit interval $I$, associating $\Wok$ with the $k$th smallest point.
It suffices to show that, \whp,
each interval $(\Wokp, \Wokp+1.1\epsk)$ contains at least $\rr0$ points.
For all $k \in [n^{4/10}, n-14\sqrt n-1\,]$,
\cref{kmedium} has $\epsk \geq n^{-0.99}$ by~\cref{eq:eklower},
so \cref{intervals} shows that
w.p.\ $1-\exp(-\Omega(n^{0.01}))$,
every interval of length $\geq 1.1\epsk$ in $[0,1]$
contains at least $\rr0 \eqdef \epsk n$ points,
and in particular this holds for all the intervals $(\Wokp, \Wokp+1.1\epsk)$.

We assume henceforth that the graph $G$ is ``good'' in the sense that
\cref{eq:Wkk0} holds for all $k \geq n^{4/10}$ for vertices $s$ and $t$,
and that
for all $k \leq n^{4/10}$ we
have the upper bounds on $X_k$ from~\cref{Xkbounds},
as proved to hold w.h.p.\ in \cref{sec:k-small}.

\subsection{Adversary}
With reference to \cref{subadv}, we need only verify \cref{budget},
and this will be done in the next subsection.

\subsection{Budgets \tp{$B_k$}{B\_k}}
With reference to \cref{budgets},
we first establish \cref{epsfit}.
This follows from
\begin{align}\label{epsfitpf}
  \Wo \kp^s-\Wok^s & \leq 0.1 \epsk .
\end{align}
The reasoning for this is the same as for \cref{eq:Wkk0}:
each interval of length $0.1 \epsk$ contains at least one point.
The parameters are trivial to check.

Next, we show that the parameters of \cref{kmedium} satisfy \cref{budget},
for which as argued in \cref{budgets} it suffices to show that they satisfy
\cref{Bksuff} and \cref{Bkbase}.
We start with \cref{Bkbase}, the base case.
Here $k=n^{4/10}$,
$B_k \geq \Ukbaseval$ from~\cref{eq:Bklower},
and $U_k = \Ukbaseval$ from \cref{Ukdef}, establishing \cref{Bkbase}.

To establish \cref{Bksuff}, first
note that $\frac{\partial}{\partial k} \Bk = \tfrac 54 \CB {\Bk}^{1/5}n^{-3/5}$ is an increasing function.
Then,
\[ B_\kp-\Bk \geq \frac{\partial}{\partial k} \Bk = \frac 54 \CB {\Bk}^{1/5}n^{-3/5} = \frac 54 \frac {\CB}{\Ce} \epsk \geq 8\epsk, \]
since $\CB \geq \tfrac{8 \cdot 4}{5} \Ce$ (check).

We now establish \cref{budget} for the parameters of \cref{kbig}.
With $\kstar= \floor{n-14\sqrt n}$,
the point where \cref{kmedium} ends and just before \cref{kbig} begins,
the previous case showed that $B_\kstar \geq U_\kstar - I_\kstar$,
and by~\cref{eq:Bkupper} $B_\kstar \leq 77 \sqrt n$.
Then, for $k$ from $\kstar+1$ to $n-2$,
\begin{align}
U_k - I_k
  &= (U_\kstar - I_\kstar) + [(U_k-U_\kstar) - (I_k - I_\kstar)]
  \notag \\&\leq B_\kstar + \left[
    \sum_{i=\kstar+1}^k (\Woi^s + \Woi^t + 7 \eps_{i-1}) -
    \sum_{i=\kstar}^{k-1} (\Woi^s + \Woi^t)\right]
    \eqnote{see \cref{Ukdef} and \cref{Ik}}
  \notag \\&\leq B_\kstar + \sum_{i=\kstar+1}^{n-2} 7 \eps_{i-1} + (\Wok^s + \Wok^t - \Wo\kstar^s - \Wo\kstar^t)
  \notag \\&\leq 77 \sqrt n + (14\,\sqrt n) \cdot 7 C'_\eps n^{-1/6} + 2
    \eqnote{see \cref{kbigparams}}
  \notag \\& \leq 78 \sqrt n
  \notag \\& \leq \Bp \eqnote{see \cref{kbigparams}}, \label{budgetsClaim2}
\end{align}
using that $\CB' \geq 78$ (check).

\subsection{Minimum of two binomial variables}
Before addressing robustness of the structure $R$,
we require a lemma (\cref{lemma:BinMin}) on the minimum $Z$
of two \iid binomial $\Bi(n,p)$ random variables.
There is a genuine difference in the cases when the common mean $\lambda=np$ is large or small:
if $\lambda$ is large then $Z$ is likely to be
close to $\la$, making $\E Z = \Theta(\la)$;
if $\lambda$ is small then $Z$ will most often be 0,
occasionally 1 (with probability about $\lambda^2$), and rarely anything larger,
making $\E Z = \Theta(\la^2)$.
The lemma relies on the following property of the median of a binomial random variable.
(A weaker form of~\cref{Lbigla}
and thus of \cref{lemma:BinMin}
can be obtained from \cref{lemma:BinDev} in lieu of using the median.)
\begin{theorem}[Hamza~{\cite[Theorem 2]{Hamza1999}}]
A binomial random variable $X$ has median satisfying $|\Med(X)-\E X| \leq \ln 2$.
\end{theorem}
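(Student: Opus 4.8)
The statement is a classical and \emph{sharp} result, and for the purposes of this paper we are content to cite~\cite{Hamza1999}; here is the shape of a self-contained argument. The plan is first to reduce to a one-sided bound. Writing $m$ for an arbitrary median of $X\sim\Bi(n,p)$ and $\mu=np$, the substitution $X\mapsto n-X$ carries $\Bi(n,p)$ to $\Bi(n,1-p)$, sends each median $m$ to a median $n-m$, and sends $\mu$ to $n-\mu$, so that ``$m\ge\mu-\ln 2$ for all $(n,p)$'' and ``$m\le\mu+\ln 2$ for all $(n,p)$'' are equivalent statements; hence it suffices to prove the upper bound. Since $k\mapsto\Pr(X\le k)$ is nondecreasing, $m\le\mu+\ln 2$ would follow from $\Pr\bigl(X\le\floor{\mu+\ln 2}\bigr)>\tfrac12$, that is, from the upper-tail estimate
\[
  \Pr\bigl(X\ge\floor{\mu+\ln 2}+1\bigr)<\tfrac12 ,
\]
and this is what I would establish for every $n$ and $p$.

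I would then split on the size of the variance $\sigma^2=np(1-p)$. When $\sigma^2$ exceeds a large absolute constant, the median of the binomial lies within roughly $\tfrac12$ of its mean, comfortably inside $\ln 2$; pinning this down needs slightly more than the plain Berry--Esseen bound, since the normal ``gap'' at the relevant lattice point is itself only $\Theta(1/\sigma)$, so I would appeal to a local (Edgeworth-type) refinement, or to the monotonicity facts behind the easy bound $|m-\mu|<\max(p,1-p)<1$ of Kaas and Buhrman made quantitative in this regime. When $\sigma^2$ is bounded --- which forces $\mu$ to be bounded or $p$ near $0$ or $1$, and by the symmetry above we may assume $p$ near $0$ --- I would compare with the Poisson law: from $\binom nj p^j(1-p)^{n-j}\le e^{-\mu}(\mu e^{p})^{j}/j!$ one gets $\Pr(X\ge k)\le e^{-\mu}\sum_{j\ge k}(\mu e^{p})^{j}/j!$, which for integer $k>\mu+\ln 2$ is below $\tfrac12$. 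The extremal instance is $\Pr(\Po(\lambda)=0)=e^{-\lambda}$, which equals $\tfrac12$ exactly at $\lambda=\ln 2$: this is precisely where the constant $\ln 2$ is forced, and why no smaller uniform constant can work.

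The main obstacle is the \emph{sharpness} of the constant. A crude Chernoff bound in the small-mean regime gives a strictly worse constant than $\ln 2$, so the Poisson comparison there has to be carried out carefully, keeping the $O(p)$ correction and the finitely many ``moderate $\mu$'' values under control; and in the large-variance regime one has to check that the $O(1/\sigma)$ Edgeworth error never swallows the (also $\Theta(1/\sigma)$) slack available. It is this bookkeeping across the transition between the two regimes, rather than any single idea, that is the bulk of the work --- which is why we simply invoke~\cite{Hamza1999}.
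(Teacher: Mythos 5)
The paper does not prove this statement at all: it is imported verbatim from Hamza~\cite{Hamza1999} and used as a black box (the surrounding text only fixes which median is meant), so your decision to cite rather than prove is exactly the paper's approach, and for the purposes of this paper that is sufficient. Your appended sketch is a sensible outline and correctly locates where the constant $\ln 2$ is forced (the Poisson atom $e^{-\lambda}=\tfrac12$ at $\lambda=\ln 2$), but as you yourself concede it would not stand as a proof: the two pivotal claims --- that $e^{-\mu}\sum_{j\ge k}(\mu e^{p})^{j}/j!<\tfrac12$ for \emph{every} integer $k>\mu+\ln 2$, and that a local/Edgeworth refinement pins the median within $\ln 2$ in the large-variance regime --- are asserted rather than established, and the reflection $X\mapsto n-X$ needs a little extra care under the convention (used by Hamza and quoted in the paper) that the median is the smallest $m$ with $\Pr(X\le m)>\tfrac12$, since that choice is not symmetric under reflection when ties occur, so the one-sided reduction should be phrased for all medians.
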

\noindent
In this discrete setting $\Med(X)$ is not unique: it can be any value $m$ for which
$\Pr(X \leq m) \geq 1/2$ and $\Pr(X \geq m) \geq 1/2$.
\cite{Hamza1999} defines it uniquely as the smallest integer $m$ such that
$\Pr(X \leq m) >1/2$;
as desired, this gives
$\Pr(X \geq \Med(X)) = 1-\Pr(X \leq \Med(X)-1) \geq 1-1/2=1/2$.
(For other results on the binomial median see
Kaas and Buhrman~\cite{Kaas1980}, in particular, Corollary~1.
Stronger results for the Poisson distribution are given by
Choi~\cite{Choi1994},
proving a conjecture of Chen and Rubin,
and by Adell and Jodr\'a~\cite{Adell2005}.)

\begin{lemma}\label{lemma:BinMin}
	Let $Z_1, Z_2$ be \iid $\Bi(n, p)$ random variables, $Z \coloneqq \min(Z_1, Z_2)$ and $\lambda \coloneqq \E Z_1 = np$.
	\begin{enumerate}[(1)]
		\item If $\lambda \geq 2$, then
		\begin{align}
			\Prob(Z \geq 0.65 \lambda) &> 1/4. \label{Lbigla}
		\end{align}
		\item If $\lambda \leq 2$, then
		\begin{align}
			\Prob(Z \geq 1) &> 0.18 \la^2 . \label{Lsmallla}
		\end{align}
	\end{enumerate}
\end{lemma}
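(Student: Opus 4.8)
The plan is to collapse both parts to a statement about a \emph{single} binomial variable, using independence: since $Z=\min(Z_1,Z_2)$ with $Z_1,Z_2$ i.i.d., for every threshold $t$ we have $\Pr(Z\ge t)=\Pr(Z_1\ge t)\Pr(Z_2\ge t)=\Pr(Z_1\ge t)^2$. Hence \cref{Lbigla} is equivalent to $\Pr(Z_1\ge 0.65\lambda)>1/2$, and \cref{Lsmallla} is equivalent to $\Pr(Z_1\ge 1)>\sqrt{0.18}\,\lambda$. Throughout one may assume $0<p<1$: if $p=0$ then $\lambda=0$ and both statements are vacuous, and if $p=1$ then $Z_1=n$ almost surely and both hold trivially. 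Note also that $m\coloneqq\Med(Z_1)\le n$, since $\Pr(Z_1\le n)=1>1/2$.

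For part~(1) I would use Hamza's theorem applied to $Z_1$: its median satisfies $m\ge \E Z_1-\ln 2=\lambda-\ln 2$. Since $\lambda\ge 2$ we get $0.35\lambda\ge 0.7>\ln 2$, so $0.65\lambda\le \lambda-\ln 2\le m$, and as $m$ is an integer, $\ceil{0.65\lambda}\le m$; therefore $\Pr(Z_1\ge 0.65\lambda)\ge\Pr(Z_1\ge m)\ge 1/2$ by the defining property of the (Hamza) median. To promote this to a strict inequality I would split into two cases. If $\ceil{0.65\lambda}\le m-1$, then $\Pr(Z_1\ge 0.65\lambda)\ge\Pr(Z_1\ge m-1)=\Pr(Z_1\ge m)+\Pr(Z_1=m-1)\ge 1/2+\Pr(Z_1=m-1)>1/2$, because $m\ge\lambda-\ln2>1$ forces $1\le m-1\le n$, and with $0<p<1$ this makes $\Pr(Z_1=m-1)>0$. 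If instead $\ceil{0.65\lambda}=m$, I would show $\Pr(Z_1\le m-1)<1/2$ strictly: were it equal to $1/2$, then $m-1$ would also be a median of $Z_1$, so Hamza gives $\lambda\le (m-1)+\ln 2$, while $\ceil{0.65\lambda}=m$ forces $\lambda>(m-1)/0.65$; combining these, $(m-1)(1/0.65-1)<\ln 2$, i.e. $m-1<\ln 2/0.538<1.3$, so $m=2$ and then $\lambda\le 1+\ln 2<2$, contradicting $\lambda\ge 2$. Hence $\Pr(Z_1\ge 0.65\lambda)=\Pr(Z_1\ge m)=1-\Pr(Z_1\le m-1)>1/2$. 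Squaring gives \cref{Lbigla}. This strict‑inequality bookkeeping is the one genuinely delicate point; everything else is a direct application of Hamza's theorem.

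For part~(2) I would estimate directly: $\Pr(Z_1\ge 1)=1-\Pr(Z_1=0)=1-(1-p)^n\ge 1-e^{-pn}=1-e^{-\lambda}$, using $1-p\le e^{-p}$. So it suffices to prove $1-e^{-\lambda}>\sqrt{0.18}\,\lambda$ for $0<\lambda\le 2$ and then square (all quantities being nonnegative). Put $g(\lambda)=1-e^{-\lambda}-\sqrt{0.18}\,\lambda$; then $g(0)=0$ and $g'(\lambda)=e^{-\lambda}-\sqrt{0.18}$ has a single zero on $(0,\infty)$, so $g$ is increasing then decreasing on $[0,2]$ and attains its minimum there at an endpoint. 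Since $g(0)=0$ and $g(2)=1-e^{-2}-2\sqrt{0.18}>0$ (this is exactly where the constant $0.18$ is calibrated: $1-e^{-2}\approx 0.8647$ versus $2\sqrt{0.18}\approx 0.8485$), we get $g(\lambda)>0$ on $(0,2]$. Hence $\Pr(Z_1\ge 1)\ge 1-e^{-\lambda}>\sqrt{0.18}\,\lambda$, and squaring yields $\Pr(Z\ge 1)=\Pr(Z_1\ge 1)^2>0.18\,\lambda^2$, which is \cref{Lsmallla}. The only obstacle here is the one‑variable calculus check, which is routine.
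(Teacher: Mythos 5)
Your proof is correct in substance and follows essentially the same route as the paper: part (1) via Hamza's median bound ($\Med(Z_1)\geq\lambda-\ln 2\geq 0.65\lambda$ once $\lambda\geq 2$), part (2) via $\Pr(Z_1\geq 1)\geq 1-e^{-\lambda}$ plus a one-variable check calibrated at $\lambda=2$ (the paper phrases it as $(1-e^{-\lambda})/\lambda\geq(1-e^{-2})/2\geq 0.43$ and uses $0.43^2>0.18$), and then squaring by independence exactly as you do. The one place you go beyond the paper is the strictness bookkeeping in part (1), and there is a soft spot there: in the case $\ceil{0.65\lambda}=m$ you apply Hamza's theorem to $m-1$ on the grounds that it ``is also a median'', but the theorem as cited is stated for the median defined as the smallest integer whose CDF exceeds $1/2$; in your scenario $\Pr(Z_1\leq m-1)=1/2$, that canonical median is $m$, not $m-1$, so the cited statement only gives $\lambda\leq m+\ln 2$, which is not enough for your arithmetic. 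The inequality you actually need ($\Pr(X\leq j)=1/2$ implies $\lambda\leq j+\ln 2$) is true and follows from the cited form by perturbing $p$ slightly downward (so the CDF at $j$ strictly exceeds $1/2$ and the canonical median is at most $j$) and letting the perturbation tend to $0$, so this is a one-line patch rather than a flaw in the strategy. Note, finally, that the paper itself does not address strictness at all --- its argument yields $\Pr(Z\geq 0.65\lambda)\geq 1/4$ and $\Pr(Z\geq 1)\geq 0.1849\,\lambda^2$, and every application of the lemma uses only these non-strict forms --- so your careful case analysis, while commendable, is more than the paper requires.
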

\begin{proof}
In the first case,
\[ \Med(Z_1) \geq \la-\ln 2 = \frac{\la-\ln 2}\la \la
 \geq \frac{2-\ln 2}2 \la \geq 0.65 \la , \]
so
$\Prob \parens{Z_1 \geq 0.65 \la}
 \geq \Prob \parens{Z_1 \geq \Med(Z_1)} \geq 1/2$.
The same holds of course for $Z_2$, and the result follows by independence.

In the second case
we again use independence, and here
\begin{equation*}
\Prob \parens{Z_1 \geq 1} = 1-{(1-p)}^n \geq 1-\exp(-\lambda)
 = \frac{1-\exp(-\lambda)}{\lambda} \cdot \lambda \geq 0.43\lambda .
\end{equation*}
The last inequality comes from minimising $\frac{1-\exp(-x)}{x}$ over $0 \leq x \leq 2$;
the function is decreasing so the minimum is at $x=2$.
\end{proof}

\subsection{Robustness in \cref{kmedium}} \label{kmedrobust}
With reference to \cref{robustness},
let us complete the robustness argument for \cref{kmedium},
showing that \cref{robustblurb} holds with high probability.
Here we have taken $\rr0 = \eps_k n$,
so that the number of edges from a middle vertex to $V'_S$
(see \cref{Zvs})
is $Z^s_v  \sim \Bi(\eps_k n, \eps_k)$,
with mean $\la=\rr0 \eps_k = \eps_k^2 n$ (see \cref{kmedlambda}).

Recall that if $\la$ is small we expect (see \cref{kmedtotalweight})
that to destroy all paths the adversary will have to delete
edges of total weight at least
$\eps_k \, n \, \lambda^2 = \epsk^5 n^3$,
which will exceed $B_k$.
And, if $\la$ is large, then each $Z_v$ will have expectation close to $\la=\epsk^2 n$, for a total cost $\epsk n$ times larger, namely $\epsk^3 n^2$,
and again this exceeds $B_k$.
We now replace these rough calculations with detailed probabilistic ones,
applying \cref{lemma:BinMin} to $Z_v$ in the
two cases of $\la$ small and large.

For the adversary to delete all \st paths via $v$, he must delete at least
\[ Z_v\coloneqq \min(Z_v^s, Z_v^t) \]
edges, and to destroy all paths he must delete at least
\[ N\coloneqq \sum_{v \in M'} Z_v \]
edges.
As described in \cref{robustness},
we imagine a fixed deletion of $k$ edges on each of $s$ and $t$
giving neighbour sets $V'_s$ and $V'_t$
and a set $M'$ of middle vertices;
we will eventually take a union bound over all such choices.

\medskip \noindent \tmbf{If $\lambda \geq 2$}, then
by \cref{lemma:BinMin},
for each $v \in M'$, $\Pr(Z^s_v \geq 0.65 \la) \geq 1/4$.
Thus, $N$ stochastically dominates $0.65\lambda \cdot \Bi(0.99n, 1/4)$,
with expectation $> 0.1608 \la n$.
We shall consider it a \emph{failure} if $N \leq 0.16 \lambda n$.
Assuming success,
since each edge costs at least $\epsk$ to delete,
it costs at least $0.16 \epsk \la n = 0.16 \epsk^3 n^2$ to delete them all.
This exceeds $B_k$:
\begin{align*}
	\frac{0.16 \cdot \epsk^3 n^2}{B_k}
	&= 0.16 \cdot \Ce^3 n^{-9/5} B_k^{-2/5} n^2 \eqnote{by definition of $\eps_k$}
    \\& \geq 0.15 \cdot \Ce^3 \CB^{-1/2} n^{1/5} n^{-1/5}
        \eqnote{by~\cref{eq:Bkupper}}
    \\ &> 1,
\end{align*}
using that $0.15 \cdot \Ce^3 \CB^{-1/2}>1$ (check).

Failure means that $N/(0.65 \la) \sim \Bi(0.99n, 1/4) \leq (0.16/0.65)n$.
Noting that $0.99 \cdot 1/4 > 0.16/0.65$,
by \cref{lemma:BinDev}, the probability of failure is
$\exp(-\Omega(n))$.
By the union bound,
the total of the failure probabilities,
over all rounds (values of $k$) and all adversary choices of the $k$ root edges at $s$ and $t$, is small:
\begin{align} \label{case1failure}
\sum_k {{\binom{\kk}{\rr0}}^2} & \cdot \exp(-\Omega(n))
 \\& \leq \sum_k \, {(n^{\rr0})}^2 \exp(-\Omega(n))
    \notag
 \\ &= \sum_k \exp\parens{2\epsk n \ln n-\Omega(n)}
    \quad\text{(by $\rr0=\epsk n$)}  \notag
 \\&\leq n \exp(-\Omega(n))
  \quad\text{(using $\epsk n = O(n^{1/2})$ from~\cref{eq:ekupper})} \notag
 \\& = o(1) . \notag
\end{align}

\medskip \noindent \tmbf{If $\lambda < 2$},
then by \cref{lemma:BinMin} $N$ stochastically dominates $\Bi(0.99n, 0.18\lambda^2)$,
with expectation $> 0.175 \la^2 n$.
We shall consider it a \emph{failure}
if $N \leq 0.17 \lambda^2 n = 0.17 \epsk^4 n^3$.
Each edge costs at least $\epsk$ to delete.
Assuming success, it thus costs at least $0.17 \epsk^5 n^3$ to delete them all, which exceeds $B_k$:
\begin{align*}
	\frac{0.17 \epsk^5 n^3}{B_k}
 &= 0.17 \Ce^5  \eqnote{by definition of $\eps_k$}
 \\ &> 1,
\end{align*}
using that $0.17 \Ce^5>1$ (check).

By \cref{lemma:BinDev}, the probability of {failure} is
\begin{equation}\label{eq:N2}
	\Prob \parens{ N \leq 0.17 \epsk^4 n^3} = \exp(-\Omega(\epsk^4 n^3)).
\end{equation}
Over all rounds (values of $k$) and adversary choices of edges incident to $s$ and $t$,
the total failure probability is at most
\begin{align}
\sum_k {\binom{\kk}{\rr0}}^2 & \cdot \Prob \parens{N < 0.17 \epsk^4 n^2}  \notag
 \\ &\leq \sum_k \exp\parens{2\epsk n \ln n- \exp(-\Omega(\epsk^4 n^3))}  \notag
 \\&\leq n \exp(-\Omega(\epsk^4 n^3)) , \notag
\intertext{because $\epsk n \ln n$ is dominated by $\epsk^4 n^3$:
the latter is larger by a factor $\epsk^3 n^2/\ln n$,
which by~\cref{eq:eklower} is
$\Omega(n^{-48/25}n^2/\ln n)=\Omega(n^{2/25}/\ln n)=\omega(1)$.
Continuing, this is}
 &\leq n \exp(-\Omega(n^{11/25}))  \eqnote{invoking~\cref{eq:eklower} again} \notag
 \\& = o(1) .  \label{case2failure}
\end{align}

\bigskip

\subsection{Robustness in \cref{kbig}} \label{kbigrobust}

Again, our aim is to establish robustness of $R$ by
showing that \cref{robustblurb} holds with high probability,
and the argument is similar to but simpler than that of \cref{kmedrobust}.

Since $\rr0=1$, both $V'_s$ and $V'_t$ have size 1.
For a vertex $v \in M'$, let $Z_v$ be the number of paths from $V'_s$ to $V'_t$ via $v$. There is only one such possible path, hence
\[ Z_v \sim \Bern \parens{\epsp^2}. \]
To destroy all \st paths the adversary must delete at least
\[ N\coloneqq \sum_{v \in M'} Z_v \]
edges. $N$ stochastically dominates $\Bi(0.99n, \epsp^2)$,
which has expectation $0.99 \epsp^2 n$.
We declare the event $N \leq 0.98 \epsp^2 n$ a \emph{failure}.
Assuming success, destroying all \st paths would cost at least
$\epsp N \geq 0.98 \epsp^3 n$.
This exceeds $\Bp$, since
\begin{equation*}
	\frac{0.98 \epsp^3 n}{B_k} = \frac{0.98 {C'_\eps}^3}{C'_B},
\end{equation*}
and $0.98 {C'_\eps}^3 > C'_B$ (check).

The probability of failure is
\begin{equation}\label{eq:N3}
\Prob \parens{ N \leq 0.98 \epsp^2 n}
    = \exp(-\Omega(\epsp^2 n))
    = \exp(-\Omega(n^{2/3})).
\end{equation}
Over all rounds and adversary choices,
using that $\binom{\kk}{\rr0} = \binom{k+1}1 \leq n$,
the total failure probability is at most
\begin{align}
 \sum_k {\binom{\kk}{\rr0}}^2 & \cdot \Prob(N \leq 0.98 \eps^2 n) \notag
  \\ &\leq
	(14\sqrt n) \, n^2 \,
 \exp (-\Omega(n^{2/3}))  \eqnote{by~\cref{eq:N3}} \label{eq:unifailure3}
      \\& = o(1) . \notag
\end{align}

\section{Lower bound}\label{lowerbound}
In this section, we establish the lower bound in \cref{Xkbounds}
of \cref{Tmain}.
\cref{LBsmallk} establishes the lower bound on $X_k$
directly for $k \leq \sqrt{\ln n}$.
Values $k \geq \sqrt{\ln n}$ are treated in the subsequent parts.
In \cref{LBrunning},
\cref{lemma:S_k-lower} establishes a lower bound
on the running totals $S_k$,
\begin{align}\label{Skdef}
S_k \coloneqq \sum_{i=1}^{k} X_i .
\end{align}
In \cref{LBlargek},
\cref{lemma:X_k-lower} obtains a lower bound on $X_k$
using \cref{lemma:S_k-lower}'s lower bound on $S_k$,
the previously established upper bound on $X_k$ from \cref{Tmain},
and the monotonicity of $X_k$.

\subsection{Lower bound for small \tp{$k$}{k}} \label{LBsmallk}
We begin with $k \leq \sqrt{\ln n}$.
For any fixed $\eps>0$, we know from~\cite{Janson123} that \whp
\begin{align}\label{X1lower}
  X_1 &> (1-\eps/2)\frac{\ln n}{n}  .
\end{align}
Assuming that~\cref{X1lower} holds,
it follows immediately, and deterministically, that for all $k \leq \sqrt{\ln n}$,
\begin{align}\label{Xklower1}
X_k &\geq X_1 \geq (1-\eps/2)\frac{\ln n}{n} \geq (1-\eps)\frac{2k+\ln n}{n} .
\end{align}
The first inequality holds because the sequence $X_k$ is monotone increasing,
the next by assumption on $X_1$,
the next by $k = o(\ln n)$.

\subsection{Lower bound on the running totals} \label{LBrunning}

\begin{lemma}\label{lemma:S_k-lower}
For any $\eps>0$, \whp, simultaneously for every
$k \leq \Khigher$,
\begin{align}\label{Sklower}
 S_k &\geq \SkLower.
\end{align}
\end{lemma}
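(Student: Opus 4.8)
The plan is to derive \cref{Sklower} by combining two high‑probability lower bounds on $S_k$, each holding simultaneously over the whole range $1\le k\le n-1$. The first is purely structural: since $P_1\cup\cdots\cup P_k$ is a union of $k$ edge‑disjoint \st paths, it uses exactly $k$ distinct edges incident on $s$ and exactly $k$ incident on $t$, at most one of which --- the edge \ste --- lies in both groups, so after discarding that edge from the $t$‑side we still have $\ge k-1$ distinct $t$‑edges disjoint from the $s$‑edges, whence
\begin{align*}
  S_k = w\parens{P_1\cup\cdots\cup P_k} \ \ge\ \sum_{j=1}^{k}\Wo j^s + \sum_{j=1}^{k-1}\Wo j^t \ \ge\ \sum_{j=1}^{k-1}\parens{\Wo j^s+\Wo j^t}.
\end{align*}
The second bound uses Janson's theorem: $X_k$ is nondecreasing in $k$, and by \cref{X1lower}, $X_1\ge(1-\eps/2)\tfrac{\log n}n$ \whp, so $S_k\ge kX_1\ge(1-\eps/2)\tfrac{k\log n}n$ \whp.

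Next I would feed the structural bound into the edge order statistics. Applying \cref{lemma:edge-orderstat} at $s$ and at $t$ with error $\eps/4$ and a slowly growing $a=\omega(1)$, \whp $\Wo j^s,\Wo j^t\ge(1-\eps/4)\tfrac jn$ for all $j\ge a$; summing and absorbing the $O(a^2/n)$ contribution of the terms $j<a$ gives $\sum_{j=1}^{k-1}(\Wo j^s+\Wo j^t)\ge(1-\eps/3)\tfrac{k^2}n$ \whp, uniformly in $k\ge a$. Comparing with the target $\sum_{i=1}^k\tfrac{2i+\log n}n=\tfrac{k(k+1)+k\log n}n$, one checks that for $k$ below a small constant multiple of $\eps\log n$ the bound $S_k\ge(1-\eps/2)\tfrac{k\log n}n$ already exceeds $(1-\eps)\sum_{i\le k}\tfrac{2i+\log n}n$, while for $k$ above a constant multiple of $\tfrac1\eps\log n$ the structural‑plus‑order‑statistics bound does; simultaneity over all such $k$ is inherited from the union bounds already inside \cref{X1lower} and \cref{lemma:edge-orderstat}.

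The main obstacle is the intermediate range, roughly $\eps\log n\lesssim k\lesssim \tfrac1\eps\log n$, where $\sum_{i\le k}2i/n$ and $k\log n/n$ have the same order and neither bound alone reaches the target --- the structural bound gives only $\Theta(k^2/n)$, which falls short of the required extra $(1-\eps)k\log n/n$. Here the plan is to write $X_i=a_i+w(C_i)+b_i$, where $a_i,b_i$ are the costs of $P_i$'s edges at $s$ and $t$ and $C_i$ is its ``core'' (the subpath joining the two neighbours of $s$ and $t$), so that $S_k=\sum_i(a_i+b_i)+\sum_i w(C_i)$ with $\sum_i(a_i+b_i)\ge\sum_{j\le k-1}(\Wo j^s+\Wo j^t)$ as above; it then suffices to show $\sum_i w(C_i)\ge(1-\eps)\tfrac{k\log n}n$, i.e.\ that the paths' middle sections collectively cost $\gtrsim k\log n/n$. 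For this I would run a Janson‑style ball‑growing argument in $G$: for a radius $r^\star$ with $2r^\star<X_1$, the balls $B_G(s,r^\star)$ and $B_G(t,r^\star)$ are disjoint, so every $P_i$ must traverse the region between them, and the traversed portion --- which lies inside $C_i$ once $a_i,b_i\le r^\star$ --- costs at least $X_1-2r^\star\ge(1-O(\eps))\tfrac{\log n}n$; moreover the residual graph $G\setminus(P_1\cup\cdots\cup P_{i-1})$ differs from $K_n$ by only $\Oe(\log^2 n)$ edges (the first $k$ paths each having length $O(\log n)$ by \cref{pathlength}), so its distances agree with those of $G$ to the needed precision, and a union bound over the $\Oe(\log n)$ paths suffices. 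The delicate points I expect to dominate the work are: the paths $P_i$ that use an expensive incident edge ($a_i$ or $b_i>r^\star$), for which the clean bound on $w(C_i)$ fails and one must instead exploit that such a path is itself incident‑edge heavy; and the tension in choosing $r^\star$ --- disjointness of the two balls forces $r^\star<X_1/2$, while $a_i\le r^\star$ forces $r^\star\gtrsim k/n$ --- which is precisely what limits how far out in $k$ the argument reaches and must be resolved to close the remaining gap.
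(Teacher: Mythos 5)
Your decomposition $S_k=\sum_i(a_i+b_i)+\sum_i w(C_i)$, the structural bound $\sum_i(a_i+b_i)\ge\sum_{j\le k-1}\parens{\Wo j^s+\Wo j^t}$, and your treatment of the two extreme ranges (small $k$ via $S_k\ge kX_1$ and Janson, large $k$ via order statistics alone) match the paper's proof and are sound. The genuine gap is exactly where you flag it: the intermediate range $\eps\ln n\lesssim k\lesssim \eps^{-1}\ln n$. There your only mechanism for lower-bounding $w(C_i)$ is $w(C_i)\ge X_1-a_i-b_i$ under the hypothesis $a_i,b_i\le r^\star$ with $2r^\star<X_1\approx\ln n/n$; but in this range the $k$ paths must collectively use root edges of rank up to $k$, of cost up to about $k/n$, so for $k$ near $\eps^{-1}\ln n$ most paths use incident edges far exceeding any admissible $r^\star$ (indeed, to retain $(1-\eps)\ln n/n$ of middle cost you would need $a_i+b_i\lesssim\eps\ln n/n$, i.e.\ rank $\lesssim\eps^2 k$). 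For those paths the clean bound is vacuous, and the fallback you sketch (``such a path is incident-edge heavy'') does not close the hole: a path may use exactly the $j$th cheapest root edge, so its incident cost exceeds its order-statistic share by $o(\ln n/n)$ and cannot pay for the missing $\approx\ln n/n$ of middle cost, while a priori its middle section could be cheap if the two root-edge endpoints happen to be joined cheaply in $G-s-t$. You acknowledge this tension ``must be resolved''; as it stands the proposal does not prove the lemma in the critical range.

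The paper resolves it with an idea that decouples the middle-section bound from the size of the incident edges. Let $G'=G-s-t$ and let $N_s,N_t$ be the endpoints of the $\ln^3 n$ cheapest root edges at $s$ and $t$. If some $P_i$, $i\le k$, uses a root edge outside these, that single edge costs at least $(1-\eps)\ln^3 n/n$, which already exceeds the entire right-hand side of \cref{Sklower} for $k\le\ln^{11/10}n$; otherwise every middle section joins some $s'\in N_s$ to some $t'\in N_t$ inside $G'$ and hence costs at least $\dist_{G'}(s',t')$ --- note deletions of earlier paths only increase distances, so a lower bound in $G'$ suffices and your residual-graph step (``differs from $K_n$ by $O_\eps(\ln^2 n)$ edges'') is unnecessary. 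Since $N_s,N_t$ are determined by the root edges and are independent of the weights inside $G'$, one excludes (with probability $o(1)$, by a union bound over the $(\ln^3 n)^2$ pairs) the event that some $t'$ lies among the $(n-2)^{1-\delta}$ nearest vertices of $s'$ in $G'$; on the complement, $\dist_{G'}(s',t')$ stochastically dominates a sum of exponentials with mean $(1+o(1))(1-\delta)\ln n/n$, which concentrates, and a union bound over all pairs gives $\dist_{G'}(s',t')\ge(1-\tfrac34\eps)\ln n/n$ simultaneously for every pair, hence at least this much middle cost for every path regardless of how expensive its incident edges are. Adding this to the order-statistics bound on the incident edges finishes the intermediate range. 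Some ingredient of this kind is what your plan is missing.
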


\begin{proof}
	Write $\Woi^s$ and $\Woi^t$ for the order statistics of edge weights out of $s$ and $t$, respectively.
By \cref{lemma:edge-orderstat}, \whp,
	\begin{equation}\label{eq:edge-orderstat}
	\Wok^s, \Wok^t \in \left[\left(1-\eps/2 \right) \frac k n,
                       \left(1+\eps/2 \right) \frac k n \right]
       \quad \text{ for all } k \geq \Klower ,
	\end{equation}
and we will assume throughout the proof that~\cref{eq:edge-orderstat} holds.

We prove the assertion in two ranges of $k$.
	
\medskip\noindent\tmbf{For $\KrangeHigh \leq k \leq n-1$,}
the $k$ paths must use at least $k-1$ edges on each of $s$ and $t$, all distinct
($k$ edges each, ignoring the edge $\set{s,t}$ if it is used).
Then, using~\cref{eq:edge-orderstat}, we get that \whp, for all $k$ in the range,
\begin{align}
S_k &\geq \sum_{i=1}^{k-1} \left( \Woi^s + \Woi^t \right)
  \geq \sum_{i=\Klower}^{k-1} (1-\eps /2) \frac{2 i}n  \notag
  \\&
  = (1-\eps/2) \parens{
    \sum_{i=1}^{k} {\frac{2i+\ln n}n}
    - \sum_{i=1}^{k} \frac{\ln n}n
    - \sum_{i=1}^{\Klower-1} {\frac{2i}n}
    - {\frac{2k}n}
   }
    \label{SbigK1}
   \\& \geq (1-o(1)) (1-\eps/2) \sum_{i=1}^{k} {\frac{2i+\ln n}n} \label{SbigK2}
     \eqnote{see below}
  \\&\geq \SkLower  \label{SbigK}
.
\end{align}
To justify \cref{SbigK2} it suffices to show that
the first sum in \cref{SbigK1} is of strictly larger order than the other terms.
The first sum is at least
$\sum_{i=k/2}^{k} 2i/n = \Omega(k^2/n)$,
which since $k \geq \ln^{11/10}n$ is also
$\Omega(k \ln^{11/10}n/n)$ and $\Omega(\ln^{22/10}n/n)$;
we will use all three formulations.
The second term is of order $O(k \ln n/n)$,
negligible compared with the middle formulation.
The third term is
$O(\ln^{2/3}n/n)$,
negligible compared with the last formulation.
And the fourth term, of order $O(k/n)$, is negligible
compared with the first formulation.

\medskip\noindent\tmbf{For $1 \leq k \leq \KrangeHigh$,}
let $\delta=\eps/3$
and let $G'=G-s-t$.
Let $N_s$ and $N_t$ be the endpoints of the cheapest $\nto$ edges out of $s$ and $t$ respectively.
Note that these sets are independent of the edge weights of $G'$.

If any path $P_i$, $i \leq k$,
uses a root edge (edge incident on $s$ or $t$) \emph{not}
among the $\ln^3 n$ cheapest edges of $s$ or $t$,
then by~\cref{eq:edge-orderstat}
this edge costs at least $(1-\eps) \ln^3 n / n$,
thus $S_k \geq (1-\eps) \ln^3 n / n$.
Then \cref{Sklower} follows because this is larger than
the RHS of~\cref{Sklower}, namely $\Theta((k^2+k\ln n)/n) = O(\ln^{11/5} n/n)$
for this range of $k$.
Thus we may assume that for all $i \leq k$,
each path $P_i$ goes via some $s' \in N_s, \; t' \in N_t$.

For $s' \in N_s, \; t' \in N_t$, define $A(s',t')$ to be the event that
$t'$ is one of the ${(n-2)}^{1-\delta}$ nearest vertices of $s'$, by cost, in $G'$.
Clearly, for each pair $s', t'$, $\Pr(A(s',t')) = {(n-2)}^{-\delta}$.
Let $A$ be the union of these events, i.e., the event that any such pair has this property.
By the union bound,
 	\[ \Prob(A) \leq {\left(\nto \right)}^2 {(n-2)}^{-\delta} = o(1) . \]
We assume henceforth that $A$ does not hold:
the $\ln^3n$ cheapest root edges at $s$ and $t$ do not happen to sample
any ``nearest''
pairs in $G'$.
 	
By assumption that $A$ does not hold,
in the \emph{exponential} model (where each edge is i.i.d.\ $\Exp(1)$) for $G'$, for each $s' \in N_s, \; t' \in N_t$,
the distance $d(s',t')$ stochastically dominates $Y \sim \sum_{i=1}^{n^{1-\delta}} \Exp(i(n-2-i))$ by \cref{treeX}.
We have $\E Y = (1+o(1))(1-\delta) \ln n / n$ by
~\cref{treeDia} (just adjusting its last equation where the value of $d$ is substituted in).
Applying \cref{exptail}'s \cref{exp.3}
with $\mu=\E Y$ as above, $\astar=n-3$, and $\lambda=1-\delta$,
that in the exponential model $G'$,
\begin{align*}
\Prob & \left(d_{G'}(s',t')
        \leq (1-\delta) \frac{(1+o(1))(1-\delta) \ln n}{n} \right)
     \; \leq \; \exp\parens{ -\Theta(n \cdot \ln n/n \cdot \delta^2) }
     \;    = \; n^{-\Theta(1)}.
\end{align*}
Since ${(1+o(1))(1-\delta)}^2 \geq (1-\tfrac34 \eps)$, by the union bound this implies,
still in the exponential model $G'$,
\begin{equation}\label{eq:not-too-close}
	\Prob\Big( \exists s' \in N_s, t' \in N_t \colon
           d_{G'}(s',t') \leq (1-\tfrac34 \eps) \ln n / n \Big)
       \; \leq \; {\left(\nto \right)}^2 n^{-\Theta(1)}
         \; = \; o(1) .
\end{equation}
By standard coupling arguments (see \cref{remark:blackbox}), this also implies that \cref{eq:not-too-close} holds in the \emph{uniform} model $G$ in which we are working.
 	
Thus \whp, for all $s' \in N_s, t' \in N_t$, we have
$d_{G'}(s',t') \geq (1-\tfrac34 \eps) \ln n$; assume this holds.
We already assumed that each path $P_i$, $i\leq k$,
goes via some $s' \in N_s, t' \in N_t$,
so its non-root edges contribute at least
$d_{G'}(s',t') \geq (1-\tfrac34 \eps) \ln n/n$ to $S_k$.
Then,
for all $k$ in this range,
\begin{align}
S_k
 &\geq \sum_{i=1}^{k} (1-\tfrac34 \eps) \frac{\ln n}n
    + \sum_{i=1}^{k-1} \left( \Woi^s + \Woi^t \right) \notag
 \\& \geq \sum_{i=1}^{k} (1-\tfrac34 \eps) \frac{\ln n}n
    + (1-\tfrac12 \eps) \sum_{i=\Klower}^{k-1} \frac{2i}n
    \eqnote{by \cref{eq:edge-orderstat}}
    \label{lbx1}
 \\& \geq 	\SkLower.  \notag
\end{align}
To justify the final inequality,
rewrite the second sum in \cref{lbx1} as
$\sum_{i=1}^{k}  \frac{2i}n
    - \frac{2k}n
    - \sum_{i=1}^{\Klower-1} \frac{2i}n$
and observe that both its second term, $2k/n$,
and its final term, which is of order $O(\Klower^2/n)$,
are negligible compared with the first sum in \cref{lbx1},
which is of order $\Omega(k \ln n/n)$.
\end{proof}

\subsection{Lower bound for large \tp{$k$}{k}} \label{LBlargek}

\begin{lemma}\label{lemma:X_k-lower}
	For any $\eps>0$, \whp, simultaneously for every $k \in [\sqrt{\ln n}, n-1]$,
\[ X_k \geq (1-\eps)\parens{\frac{2k + \ln n}{n} }. \]
\end{lemma}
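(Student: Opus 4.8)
The plan is to derive the lower bound on $X_k$ from three ingredients that are (or will be) available: the lower bound on the partial sums $S_k=\sum_{i=1}^{k}X_i$ from \cref{lemma:S_k-lower}, the already‑proven upper bound on the $X_i$ from \cref{Tmain}, and the monotonicity $X_1\le X_2\le\cdots$. The single observation that does the work is that, for any $j<k$,
\[
 S_k-S_j=\sum_{i=j+1}^{k}X_i\le (k-j)\,X_k,
 \qquad\text{hence}\qquad
 X_k\ge\frac{S_k-S_j}{k-j},
\]
so it suffices to pick $j$ as an appropriate constant fraction of $k$ and substitute the two‑sided bounds.

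In detail, given $\eps>0$ from the statement, I would fix auxiliary constants $\delta\coloneqq\eps/2$ and $\eps'\coloneqq\eps^{2}/32$ (depending only on $\eps$), set $j\coloneqq\floor{(1-\delta)k}$, and write $T_m\coloneqq\sum_{i=1}^{m}\frac{2i+\ln n}{n}=\frac{m(m+1+\ln n)}{n}$. Since $k\ge\sqrt{\ln n}=\omega(1)$, for large $n$ we have $1\le j<k$ and $k-j\ge\delta k$. Applying \cref{lemma:S_k-lower} with error parameter $\eps'$ gives $S_k\ge(1-\eps')T_k$, and the upper bound of \cref{Tmain} with error parameter $\eps'$ gives $S_j=\sum_{i=1}^{j}X_i\le(1+\eps')T_j$ — both \whp and simultaneously over all relevant indices. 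Therefore
\[
 X_k\;\ge\;\frac{(1-\eps')T_k-(1+\eps')T_j}{k-j}
 \;=\;\frac{T_k-T_j}{k-j}-\eps'\,\frac{T_k+T_j}{k-j}.
\]
A one‑line computation gives $\frac{T_k-T_j}{k-j}=\frac{k+j+1+\ln n}{n}\ge(1-\delta)\frac{2k+\ln n}{n}$, while, using $T_j\le T_k\le\frac{k(2k+\ln n)}{n}$ and $k-j\ge\delta k$, one has $\eps'\frac{T_k+T_j}{k-j}\le\frac{2\eps'}{\delta}\cdot\frac{2k+\ln n}{n}$. With the chosen constants, $\delta+\frac{2\eps'}{\delta}=\frac{\eps}{2}+\frac{\eps}{8}<\eps$, so
\[
 X_k\;\ge\;\Bigl(1-\delta-\tfrac{2\eps'}{\delta}\Bigr)\frac{2k+\ln n}{n}\;>\;(1-\eps)\,\frac{2k+\ln n}{n}.
\]

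Both probabilistic inputs hold \whp uniformly over all $k\le n-1$, and on that event every inequality above is a deterministic statement valid for each individual $k$, so the conclusion holds simultaneously for all $k\in[\sqrt{\ln n},n-1]$. Together with the lower bound for $k\le\sqrt{\ln n}$ from \cref{LBsmallk}, this completes the lower bound in \cref{Xkbounds}, and hence, with the upper bound from \cref{sec:k-small,unifUB}, the proof of \cref{Tmain}. I do not expect a genuine obstacle: the argument is elementary once the three ingredients are in place; the only points needing care are verifying that $j$ is a legitimate index strictly between $1$ and $k$ (which rests on $k=\omega(1)$, i.e.\ $k\ge\sqrt{\ln n}$) and that the single parameter $\eps'$ can legitimately be fed to both \cref{lemma:S_k-lower} and \cref{Tmain}.
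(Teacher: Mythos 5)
Your proposal is correct and follows essentially the same route as the paper: both derive $X_k \geq \frac{S_k - S_{k-t}}{t}$ from monotonicity, then combine the lower bound on $S_k$ from \cref{lemma:S_k-lower} with the already-proved upper bound of \cref{Tmain} applied to $S_{k-t}$, choosing $t=k-j$ to be a small constant (depending on $\eps$) fraction of $k$. The only difference is bookkeeping — you decouple the cut fraction $\delta$ from the probabilistic error $\eps'$ and compute $T_k-T_j$ exactly, while the paper uses a single $\delta=\eps^2/9$ and optimises $t=k\sqrt{2\delta}$ — which does not change the substance.
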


\begin{proof}
Let $\delta=\eps^2/9$ and define
\begin{equation}
	c_k = \frac{2k+\ln n}{n},
    \quad L_k = (1-\delta)\sum_{i=1}^k c_i,
    \quad U_k = (1+\delta) \sum_{i=1}^k c_i.
\end{equation}
\Whp, simultaneously for all $k$,
$S_k \geq L_k$ (by \cref{lemma:S_k-lower})
and $S_k \leq U_k$ (by the upper bound of~\cref{Tmain}, already proved).
Henceforth, assume that both hold, so $L_k \leq S_k \leq U_k$.
The rest of the argument is deterministic.
For any positive integer $t<k$, using that $X_k$ is monotone increasing, we have
\begin{align}
t X_k
  & \geq   X_k +\cdots+ X_{k-t+1}   \notag
  \\& =    S_k - S_{k-t}            \notag
  \\& \geq L_k - U_{k-t}            \label{LBform}
  .
\end{align}
Thus
\begin{align*}
X_k &\geq \frac1t \parens{L_k - U_{k-t}}
 = \frac1t \parens{ (1-\delta)\sum_{i=1}^k c_i - (1+\delta)\sum_{i=1}^{k-t} c_i }
 \\&\geq 	\frac1t \parens{ \sum_{i=k-t+1}^k c_i -2\delta \sum_{i=1}^{k} c_i }
 \geq \frac1t \parens{ t c_{k-t} - 2\delta k c_k }
 = c_{k-t} - \frac{2 \delta k c_k}{t}
 \\&= c_k- \frac{2t}{n} - \frac{2 \delta k c_k}{t}
 \\& \geq c_k- \frac{t c_k}{k} - \frac{2\delta k}{t} c_k
    \quad\text{(using that $c_k/k>2/n$)}
 \\&= c_k \parens{ 1- \frac{t}{k} - \frac{2\delta k}{t} }
 .
\end{align*}
Ignoring integrality for a moment, setting $t=k\sqrt{2\delta}$
would make the last expression $c_k (1- 2\sqrt{2\delta})$.
Since this $t=\Theta(k)=\omega(1)$, rounding it can be seen to
change the expression by a factor $1+o(1)$, so we may safely write
\begin{align*}
X_k &\geq c_k (1-3\sqrt{\delta})
    = (1-\eps) \kcost.
\end{align*}

\end{proof}

\section{Exponential model} \label{ExpBounds}
In this section we prove \cref{Texp}, the analogue of \cref{Tmain} for exponentially distributed edge weights.

For small $k$, results for the exponential case follow from those for the uniform.
We first argue that
the upper bound of \cref{Tmain} also holds in the exponential case
for any $k=o(n)$.
Couple the two models, so that any edge of weight $w=o(1)$ in one model
has cost $w'=w(1+o(1))$ in the other.
The uniform-model upper-bound constructions in
\cref{sec:k-small} (for $k=o(n^{1/2})$)
and \cref{largekUB,unifUB} (for larger $k$)
only use edges of weight $o(1)$ (when $k=o(n)$),
and therefore the same upper bounds hold for the exponential model;
the multiplicative difference of $1+o(1)$ can be subsumed into the
factor $1+\eps$ already present.
(In the construction of \cref{largekUB,unifUB},
the ``middle edges'' are of cost $o(1)$ for \emph{all} $k$,
but the ``incident edges'' have larger cost for $k$ large.
In particular, for large $k$,
\cref{eq:Wkk0} will no longer hold in the exponential case
until we adjust $\rr0$ and $\eps_k$ appropriately.)

For the lower bound too, the argument in \cref{lowerbound} carries
over for all $k=o(n)$.
The lower bounds $L_k$ on the prefix sums $S_k$
derived in \cref{LBsmallk,LBrunning}
carry over to the exponential case
because the edge costs are equal to within $1+o(1)$ factors in the two models.
The upper bounds $U_k$ on the prefix sums are simply the sums of the
individual upper bounds on $X_k$, and we have just argued that these change
only by a $1+o(1)$ factor.
\cref{LBlargek} only uses $L_k$ and $U_k$ to derive lower bounds on $X_k$,
so with these both changed only by $1+o(1)$ factors,
its results carry over verbatim.

Our task, then, is to prove the upper and lower bounds in \cref{Texp}
for larger $k$.
For the upper bound,
arguing for $k> n^{0.4}$ (there is no advantage to a larger starting value),
we use the same approach as for the uniform model in \cref{largekUB}.

For the lower bound, we argue for $k \geq n^{9/10}$.
Unfortunately, the method used in \cref{lowerbound}
for the uniform distribution does not extend;
let us explain why.
The lower bound there came from \cref{LBform},
$t X_k \geq L_k - U_{k-t}$,
valid for any functions $L$ and $U$ with $L_k \leq S_k \leq U_k$.
Here, we would take $L_k$ as the sum $I_k$ of incident edges as in \cref{Ik}
and $U_k$ as the sum of the $X_k$ upper bounds as in \cref{XkWok}.
Recall that we defined $B_k$
so that $B_k \geq U_k-L_k$, as in \cref{budget}.
Then we can rewrite the previous lower bound approach as
$X_k
 \geq \frac1t (L_k-U_{k-t})
 \geq \frac1t (L_k-L_{k-t}) + \frac1t (L_{k-t}-U_{k-t})
 \geq \frac1t \sum_{i=k-t}^{k-1} \Woi - \frac1t B_{k-t}$.
For large $k$,
$\Wok$ and therefore $X_k$ are $\Theta(\ln n)$.
Since the $B_k$ grow to size $\Theta(n^{1/2})$
(in the exponential case as well as the uniform case),
we are thus limited by the second term to
$t=\Omega(n^{1/2+o(1)})$.
However, from \cref{muX}, such a large value of $t$
would mean that the average given by the first term
is significantly different from $\Wok$.

The desired lower bound would be immediate if we could claim that
$P_k$ necessarily used the $k$th cheapest edge on $s$ (of cost $\Wok^s$)
or a later one, and likewise for $t$.
We will prove something close to this.
We argue in \cref{ExpLB} that every pair of vertices (excluding both $s$ and $t$)
is joined by a path of cost at most $\delta$
(for some small $\delta$ to be specified)
that is edge-disjoint from \emph{all} $P_i$, $i=1,\ldots,n-1$.
We will show that this implies that path $P_k$ uses an edge on $s$ that is at most $\delta$ cheaper than $\Wok^s$,
and likewise for $t$,
yielding a sufficient lower bound.

\subsection{Claims, and implications for \cref{Texp}} \label{ExpUpper}
In order to establish upper bounds on $X_k$ in the exponential model,
we use the same structure $\Rk$ as described in \cref{structR}.
Then \cref{XkWok} follows as before,
and we can continue to define $U_k$ as in \cref{Ukdef}.
For convenience define
\begin{align}\label{kbardef}
  \kbar &= n-k .
\end{align}

As before we will treat $k$ in two ranges,
and we start now with the smaller range.

\begin{claim}\label{expkmedium}
	For $k \in [n^{4/10}, \expkstar \,]$, let
	\begin{align}\label{ExpBk}
	B_k \coloneqq \parens{ \frac{2 n^{1/25}+\CB(n^{3/5}- \kbar^{3/5})}{n^{1/5}} }^{5/4}
	\quad \text{and} \quad
	\eps_k \coloneqq \Ce B_k^{1/5} n^{-1/5} \kbar^{-2/5} ,
	\end{align}
	with $\CB=44$ and $\Ce=4$.
	Then, asymptotically almost surely,
	\begin{equation}\label{ExpXkWk}
	X_\kp \leq \Wokp^s + \Wokp^t + 8 \epsk.
	\end{equation}
\end{claim}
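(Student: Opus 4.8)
The plan is to run the uniform-model argument of \cref{unifUB} (the proof of \cref{kmedium}) with exponential-specific inputs substituted in, using the same robust structure $\Rk$ of \cref{structR}. The one structural change is the branching factor out of levels $0$: because near the $k$th cheapest edge at a vertex the spacings of the edge-weight order statistics are of order $1/\kbar$ rather than the uniform $1/n$ (recall $\EWW{k}=H(\nm)-H(\nm-k)\asymp \ln n-\ln\kbar$ from \cref{muX}), I would take $\rr0\coloneqq \epsk\kbar$, the exponential analogue of the uniform $\rr0=\epsk n$. From the explicit formulas of \cref{expkmedium} one first records the standard preliminaries, for $k$ in the stated range $[n^{4/10},\expkstar]$ (so $\kbar\geq\sqrt n$): $\Bk$ and $\epsk$ are increasing in $k$, $\epsk=o(1)$, $\rr0=\epsk\kbar=n^{\Omega(1)}$ and $\rr0=o(\kbar)$ (hence $\kk\leq\nm$ and the construction is valid), $\rr0=O(n^{1/2})$, and $\Bk\leq(1+o(1))\CB^{5/4}n^{1/2}$ (at the top of the range $n^{3/5}-\kbar^{3/5}=n^{3/5}(1-o(1))$ dominates both $2n^{1/25}$ and $\kbar^{3/5}$).

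The first and main substantive step is the \emph{path weights} estimate: that \whp, simultaneously for the vertices $s$ and $t$ and for all $k\geq n^{4/10}$, the weight $\Wokk$ of the $\kk$th cheapest edge satisfies $\Wokk\leq\Wokp+1.1\epsk$ (the exponential replacement for \cref{eq:Wkk0}), together with $\Wokp-\Wok\leq 0.1\epsk$ (needed for \cref{epsfit}). This is exactly where the uniform proof invoked \cref{intervals}; here I would instead use the representation $\Wok=\sum_{i=1}^{k}Z_{n-i}$ with independent $Z_j\sim\Exp(j)$ (\cref{ordersumexp}). Then $\Wokk-\Wokp=\sum_{i=k+2}^{k+\rr0}Z_{n-i}$ is a sum of $\rr0-1$ independent exponentials whose rates $n-i$ all lie in $[\kbar-\rr0,\kbar-2]=[(1-o(1))\kbar,\kbar]$, with mean $\sum_{i=k+2}^{k+\rr0}1/(n-i)\leq(1+o(1))\rr0/\kbar=(1+o(1))\epsk$; by \cref{exp.1} of \cref{exptail} it exceeds $1.1\epsk$ with probability $\exp(-\Omega(\kbar\epsk))=\exp(-\Omega(\rr0))$. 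Likewise $\Wokp-\Wok=Z_{\nm-k}\sim\Exp(\kbar-1)$ exceeds $0.1\epsk$ with probability $\exp(-(1-o(1))\cdot 0.1\,\kbar\epsk)=\exp(-\Omega(\rr0))$. Since $\rr0=n^{\Omega(1)}$, these probabilities are summable over the two root vertices and the $\leq n$ values of $k$, so both inequalities hold \whp, and we condition on this event. (For $k=o(n)$ these conclusions are already subsumed by the uniform-model upper bound imported at the start of \cref{ExpBounds}, so only polynomially large $k$ with $\kbar$ possibly small genuinely needs the above, but the argument is uniform.) \emph{This is the step I expect to be the real obstacle}: the $\kbar$-dependence forces $\rr0=\epsk\kbar$, and then the two competing demands $B_\kp\geq\Bk+8\epsk$ (budget recursion) and $\epsk\cdot n\cdot(\rr0\epsk)^2=\epsk^5 n\kbar^2=\Theta(\Bk)$ (robustness), which, solved together as in the remark after \cref{Bksuff}, are precisely what pin down the formulas for $\Bk$ and $\epsk$ in the statement, must be made to hold simultaneously with the stated constants across the whole range; everything else is bookkeeping.

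The \emph{adversary} and \emph{budget} steps are then the routine adaptation of \cref{budgets}: \cref{budget} reduces, exactly as there, to the base case \cref{Bkbase} and the recursion \cref{Bksuff} together with \cref{epsfit}. Here \cref{epsfit} follows from $\Wokp-\Wok\leq 0.1\epsk$ just established; \cref{Bkbase} holds as in \cref{unifUB} from the small-$k$ upper bounds of \cref{sec:k-small}, which carry over to the exponential model for $k=o(n)$ (see the opening of \cref{ExpBounds}); and \cref{Bksuff} follows by differentiating $\Bk=u^{5/4}$, $u=(2n^{1/25}+\CB(n^{3/5}-\kbar^{3/5}))/n^{1/5}$: one gets $\tfrac{\partial}{\partial k}\Bk=\tfrac54 u^{1/4}\tfrac{\partial u}{\partial k}=\tfrac{3\CB}{4}\,B_k^{1/5}n^{-1/5}\kbar^{-2/5}=\tfrac{3\CB}{4\Ce}\,\epsk$, which is increasing in $k$, so $B_\kp-\Bk\geq\tfrac{3\CB}{4\Ce}\epsk\geq 8\epsk$ provided $3\CB\geq 32\Ce$ (which $\CB=44$, $\Ce=4$ satisfy).

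For \emph{robustness} I repeat \cref{kmedrobust}. With $\rr0=\epsk\kbar$, the number $Z_v^s\sim\Bi(\rr0,\epsk)$ of surviving edges from a middle vertex $v$ to $V'_s$ has mean $\lambda=\rr0\epsk=\epsk^2\kbar$; applying \cref{lemma:BinMin} to $Z_v=\min(Z_v^s,Z_v^t)$, summing $N=\sum_{v\in M'}Z_v$ over the $\card{M'}>0.99n$ middle vertices, and declaring the obvious \cref{lemma:BinDev} deviation a failure, the cost the adversary must pay to destroy all \st paths is, on non-failure, at least $0.16\,\epsk\lambda n=0.16\,\epsk^3\kbar n$ when $\lambda\geq 2$ and at least $0.17\,\epsk\lambda^2 n=0.17\,\epsk^5\kbar^2 n$ when $\lambda<2$. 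Since $\epsk^5\kbar^2 n=\Ce^5\Bk$ by the definition of $\epsk$, the second quantity is $0.17\,\Ce^5\Bk>\Bk$ (as $0.17\,\Ce^5>1$, check); the first equals $0.16\,\Ce^3 B_k^{-2/5}n^{2/5}\kbar^{-1/5}\cdot\Bk\geq(1-o(1))\,0.16\,\Ce^3\CB^{-1/2}\,(n/\kbar)^{1/5}\,\Bk>\Bk$, using $\Bk\leq(1+o(1))\CB^{5/4}n^{1/2}$, $\kbar\leq n$, and $0.16\,\Ce^3\CB^{-1/2}>1$ (check). The union bound over the $\leq n$ rounds and the $\binom{\kk}{\rr0}^2\leq n^{2\rr0}$ adversarial choices of root edges contributes $\exp(2\rr0\ln n-\Omega(n))$ in the first case and $\exp(2\rr0\ln n-\Omega(\epsk^4\kbar^2 n))$ in the second, both $o(1)$ since $\rr0\ln n=O(n^{1/2}\ln n)$ while $\epsk^4\kbar^2 n=\Omega(\epsk^3\kbar n)=n^{\Omega(1)}$ in the relevant regime. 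Finally, on the intersection of all these high-probability events: every \st path of $\Rk$ has cost at most \cref{path1} $\leq\Wokp^s+\Wokp^t+7\epsk$ (i.e.\ \cref{path+}), while the adversary, strong enough to delete $P_1,\dots,P_k$ once \cref{budget} holds, cannot destroy all \st paths of $\Rk$, so one survives, yielding \cref{XkWok} and hence \cref{ExpXkWk}.
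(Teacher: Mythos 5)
Your proposal is correct and follows essentially the same route as the paper's own proof: the same structure $\Rk$ with $\rr0=\epsk\kbar$, the same exponential order-statistic representation and \cref{exptail} tail bound for $\Wokk-\Wokp\leq 1.1\epsk$ and $\Wokp-\Wok\leq 0.1\epsk$, the same base case plus differentiation of $\Bk$ (with the same check $\tfrac{3\CB}{4\Ce}\geq 8$) for \cref{Bksuff}, and the same two-case \cref{lemma:BinMin} robustness argument with identical constant checks and union bounds. The only nitpick is in the $\la<2$ union bound, where you compare $\rr0\ln n$ with $\epsk^4 n\kbar^2=\Omega(\epsk^3 n\kbar)$ after dropping a factor $\rr0$; the clean statement (as in the paper) is that $\epsk^4 n\kbar^2$ exceeds $\rr0\ln n=\epsk\kbar\ln n$ by the factor $\epsk^3 n\kbar/\ln n=\Omega(n^{2/25}/\ln n)\to\infty$, which your own estimates already give.
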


\noindent\textbf{Remark:}
In proving \cref{expkmedium} we will set
\begin{align}\label{expk0med}
\rr0 & \coloneqq \epsk \kbar .
\end{align}
because it roughly equates $\Wo{k+\rr0}-\Wok$ and $\epsk$; see \cref{muX}.
In this regime integrality is not an issue:
$\rr0$ is large, per \cref{eq:Expr0}.

It is clear that both $B_k$ and $\eps_k$ in \cref{ExpBk} are increasing in $k$,
even over the larger range $k \in [0,n]$.
We will make use of the following bounds,
holding for $n$ sufficiently large.
Here, \cref{eq:ExpBkUpper} uses that at $k=n-\Theta(\sqrt{n})$,
$\kbar^{3/5}$ dominates $2n^{1/25}$,
while \cref{eq:ExpBkLower} takes $k=0$.
\begin{align}
	B_k &\leq B_{\expkstar} \leq \CB^{5/4} n^{1/2} \label{eq:ExpBkUpper}\\
	B_k &\geq B_{n^{4/10}} \geq 2n^{-{1/5}} \label{eq:ExpBkLower}
\\
	\epsk & \leq \Ce B_k^{1/5} n^{-1/5} n^{-1/2 \cdot 2/5} \leq \Ce \CB^{1/4} n^{-3/10} \label{eq:ExpEkUpper} \\
	\epsk & \geq \Ce {(B_{n^{4/10}})}^{1/5} \, n^{-1/5} \, \kbar^{-2/5} \geq \Ce n^{-{6/25}} \kbar^{-2/5}  \label{eq:ExpEkLower}
\\
\rr0 &= \kbar \epsk
  \stackrel{\cref{eq:ExpEkLower}}{\geq} \Ce n^{-6/25} \kbar^{3/5}
  \geq \Ce n^{3/50} . \label{eq:Expr0}
\end{align}

\begin{claim}\label{expkbig}
For $k \in  (\expkstar, n-2 \,] $, let
\begin{align}\label{ExpBk2}
B_k \coloneqq \CB' \sqrt n
\quad \text{and} \quad
\eps_k \coloneqq C'_\eps n^{-1/6} ,
\end{align}
with $\CB=115$ and $\Ce=5$.
Then, asymptotically almost surely,
simultaneously for all $k$ in this range,
\begin{align}
	X_\kp &\leq \Wokp^s + \Wokp^t + 8 \epsk. \label{ExpXkUB}
\end{align}
\end{claim}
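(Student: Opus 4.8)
The plan is to run the template of \cref{largekUB} exactly as was done for the uniform large-$k$ claim \cref{kbig} in \cref{kbigrobust}. We reuse the structure $\Rk$ of \cref{structR}, so by the framework recalled in \cref{ExpUpper} it suffices to verify the four ingredients \emph{structure}, \emph{path weights}, \emph{budget}, and \emph{robustness}; once these hold, \cref{XkWok} delivers $X_\kp\leq\Wokp^s+\Wokp^t+7\epsk\leq\Wokp^s+\Wokp^t+8\epsk$. Since here $\rr0=1$, two of the ingredients are immediate: the structural requirement $\kk\leq n-1$ holds for all $k\leq n-2$, and the passage from \cref{path1} to \cref{path+} is trivial because $\Wokk=\Wokp$ when $\rr0=1$. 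So the work is entirely in \emph{budget} and \emph{robustness}.

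For the \emph{budget} I would establish \cref{budget} directly, mimicking \cref{budgetsClaim2}. Write $\kstar=\floor{\expkstar}$, the last point covered by \cref{expkmedium}; that claim's proof already gives $B_\kstar\geq U_\kstar-I_\kstar$, and \cref{eq:ExpBkUpper} gives $B_\kstar\leq\CB^{5/4}n^{1/2}=(1+o(1))\,44^{5/4}\sqrt n$. Telescoping $U_k-I_k$ over $k$ from $\kstar+1$ to $n-2$ exactly as in \cref{budgetsClaim2} leaves
\[
U_k-I_k\;\leq\;B_\kstar+\bigl(\Wok^s+\Wok^t-\Wo\kstar^s-\Wo\kstar^t\bigr)+7\!\!\sum_{i=\kstar+1}^{n-2}\!\!\eps_{i-1}.
\]
The last sum has at most $\sqrt n$ terms, each at most $C'_\eps n^{-1/6}$ (the single medium-range term $\eps_\kstar$ being smaller still for $n$ large), hence is $O(n^{1/3})=o(\sqrt n)$. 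The one genuinely new point relative to the uniform proof is the term $\Wok^s+\Wok^t-\Wo\kstar^s-\Wo\kstar^t$: in the uniform model this was at most $2$, whereas here I would simply bound it by $\Wo{n-1}^s+\Wo{n-1}^t$, which by \cref{lemma:edge-orderstat} (applied with error bound $1$ and any slowly growing $a$) is $O(\ln n)=o(\sqrt n)$ \whp. Thus $U_k-I_k\leq(1+o(1))\,44^{5/4}\sqrt n\leq\CB'\sqrt n=B_k$ for $n$ large, using $\CB'=115$ (check), which is \cref{budget}.

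For \emph{robustness} I would repeat \cref{kbigrobust} with a single change forced by the exponential law: an $\Exp(1)$ weight lies in $(\epsk,2\epsk)$ with probability $p\coloneqq e^{-\epsk}-e^{-2\epsk}$, which since $\epsk=C'_\eps n^{-1/6}\to0$ satisfies $p=(1+o(1))\epsk\geq0.99\epsk$ for $n$ large. After fixing the adversary's $k$ root deletions on each of $s$ and $t$ (so $\card{V'_s}=\card{V'_t}=\rr0=1$) and pruning to $M'$ with $\card{M'}>0.99n$, each $v\in M'$ joins $V'_s$ to $V'_t$ in $R$ independently with probability $p^2$; letting $N$ count such $v$, we get $N$ stochastically dominating $\Bi(0.99n,p^2)$, whose mean exceeds $0.97\epsk^2 n$. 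Calling it a failure if $N\leq0.96\epsk^2 n$, \cref{lemma:BinDev} bounds the failure probability by $\exp(-\Omega(\epsk^2 n))=\exp(-\Omega(n^{2/3}))$; assuming no failure, destroying all \st paths forces deletion of at least one of the two (distinct, across $v$) middle edges of each such $v$, i.e.\ at least $N$ middle edges of total cost at least $N\epsk\geq0.96\,\epsk^3 n=0.96\,(C'_\eps)^3\sqrt n=120\sqrt n>\CB'\sqrt n=B_k$ (check, with $C'_\eps=5$), which the adversary cannot afford, so an \st path survives and \cref{XkWok} holds. A union bound over the $\leq\sqrt n$ values of $k$ in this range and the $O(n^2)$ adversary choices of which root edge to keep on each of $s$ and $t$ contributes total failure probability $\leq n^{2.5}\exp(-\Omega(n^{2/3}))=o(1)$; combined with the a.a.s.\ events used for the budget and inherited from \cref{expkmedium} and \cref{lemma:edge-orderstat}, this yields \cref{ExpXkUB} \whp simultaneously for all $k$ in the range, completing the upper bound of \cref{Texp}.

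The main thing to watch is the constant bookkeeping, which is tighter than in the uniform case. The budget inherited from \cref{expkmedium} can be as large as $\approx44^{5/4}\sqrt n\approx113\sqrt n$, so $\CB'$ must exceed that (plus the $o(\sqrt n)$ increments of the big range), while robustness forces the adversary to spend $0.96\,(C'_\eps)^3\sqrt n$, which must in turn exceed $\CB'\sqrt n$; the choices $\CB'=115$, $C'_\eps=5$ thread this narrow window and must also be consistent with the medium-range constants $\CB=44$, $\Ce=4$ that fix the inherited budget. Beyond that, the only substantive input over the uniform argument is the order-statistic concentration of \cref{lemma:edge-orderstat} controlling $\Wok$ for $k$ near $n$, plus the elementary estimate $e^{-\epsk}-e^{-2\epsk}=(1+o(1))\epsk$; neither is a real obstacle.
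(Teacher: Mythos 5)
Your proposal is correct and follows the paper's route almost exactly: with $\rr0=1$ the structure and path-weight steps are trivial, the budget is verified directly by telescoping from $\kstar=\floor{\expkstar}$ using the inherited bound $B_\kstar\leq \CB^{5/4}\sqrt n\approx 113.3\sqrt n$, and robustness is the Bernoulli/$\Bi(0.99n,\cdot)$ argument with a union bound over the $O(\sqrt n)$ rounds and $O(n^2)$ root choices. The one place you diverge is the very point the paper flags as new in this regime: the leftover term $\Wok^s+\Wok^t-\Wo\kstar^s-\Wo\kstar^t$ in the telescoping. The paper handles it by redefining $I_k$ (see \cref{Ikdefnew}), extending the incident-edge sum to $i=k$ and subtracting $\stebound$ for the possible double-count of \ste, so that the top order statistics cancel exactly and no concentration of $\Wok$ near $k=n-1$ is needed; you instead keep the original $I_k$ from \cref{Ik} and bound the residual by $\Wo{n-1}^s+\Wo{n-1}^t=O(\ln n)=o(\sqrt n)$ \whp via \cref{lemma:edge-orderstat}. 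Both are sound and land inside the same constant window ($\leq 115\sqrt n$); the paper's version spends one deterministic correction term, yours spends one extra high-probability event. A small bonus of your write-up is that you explicitly replace the inclusion probability $\epsk$ by $e^{-\epsk}-e^{-2\epsk}=(1+o(1))\epsk$ for the exponential middle edges and rerun the constants ($0.96\,(C'_\eps)^3=120>115$), a point the paper's \cref{exprobustbig} passes over silently by writing $Z_v\sim\Bern(\epsp^2)$.
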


\noindent\textbf{Remark:}
In proving \cref{expkbig} we will set
\begin{align}\label{expk0big}
\rr0 & \coloneqq 1 .
\end{align}

As in \cref{kbig}, $B_k$ and $\eps_k$ are constants independent of $k$,
but we retain the subscript for consistency with the notation of \cref{largekIntro}.

\begin{proof}[Proof of the upper bounds in \cref{Texp}]
Analogous to the argument in \cref{uniclaims}, it is sufficient to check that $\epsk = o(\E \Wok)$.
Since $\E \Wok \sim \ln \parens{\frac{n}{n-k}} \geq \frac kn$ (see \cref{muX}),
it is enough to show that $\eps_k = o(k/n).$

For $k \leq n^{0.99} = o(n)$, by first-order approximation,
\begin{align}\label{n35}
  n^{3/5}-\kbar^{3/5}
   &\eqdef n^{3/5}-(n-k)^{3/5}
   \sim \tfrac 35 n^{-2/5} k ,
\end{align}
so
$ B_k = \Theta \parens{n^{-1/5} + n^{-3/4} k^{5/4}} $.
Hence, from \cref{expkmedium}, specifically \cref{ExpBk},
\begin{align}
\eps_k
= \Theta( (n^{-1/25} + n^{-3/20}k^{1/4}) n^{-1/5} n^{-2/5} )
= \Theta( n^{-16/25} + n^{-3/4}k^{1/4})
= o(k/n)
\end{align}
as $k \geq n^{4/10}$.

For $k > n^{0.99}$, we have in \cref{expkmedium} that $\eps_k = O(n^{-3/10})$ by \cref{eq:ExpBkUpper}, and so $\eps_k = o(k/n)$,
while in \cref{expkbig},
$\eps_k = \Theta(n^{-1/6}) =o(k/n)$.
\end{proof}

\subsection{Path weights}

To show inequality \cref{path+} it suffices to show that
\begin{align}\label{eq:expWkk0}
\Wokk - \Wokp \leq 1.1 \epsk.
\end{align}
In \cref{expkbig}, we have defined $\rr0 \coloneqq 1$, so \cref{eq:expWkk0} is trivial.
For \cref{expkmedium},
$\Delta \coloneqq \Wokk-\Wokp$
has the same distribution as $\sum_{i=k+2}^{k+\rr0} X(n-i)$,
where $X(a) \sim \Exp(a)$
and these variables are all independent.
Thus $\Delta$ is stochastically dominated by the sum of $\rr0-1$
independent random variables $X(\kbar-\rr0)$.
Since $\rr0 = \kbar \eps_k$,
we have that
$\E \Delta \leq \rr0/(\kbar-\rr0) = \epsk/(1-\epsk)$,
and from \cref{exptail} it follows that
$\Pr(\Delta > 1.1 \epsk) = O(\exp(-\Theta(\rr0)))$.
From \cref{eq:Expr0},
by the union bound, there is a negligible chance that \cref{path+}
fails in any round.

\subsection{Budgets in \cref{expkmedium}} \label{ExpC1budgets}
As before, we need to define a $B_k$ satisfying \cref{budget}
and, as before, $\epsk$ can be guessed from \cref{epsk},
then checked to satisfy yield robustness as in \cref{kmedrobust,kbigrobust}.
The base case, confirming \cref{Bkbase}, is given by
$k=n^{4/10}$,
where by \cref{eq:ExpBkLower}
\begin{align}
B_k
\geq 2 n^{-1/5}
\eqdef U_k .
\end{align}

To verify \cref{Bksuff},
it is straightforward to check that
$\frac{\partial^2}{\partial k^2} \Bk$ is positive,
so
$\frac{\partial}{\partial k} \Bk$ is increasing,
and
\[
B_\kp-\Bk \geq \frac{\partial}{\partial k} \Bk
  = \frac 54 \frac 35 \frac {\CB}{\Ce}
  = \frac 34 \frac {\CB}{\Ce}
 \geq 8\epsk ,
\]
since $\frac 34 \frac {\CB}{\Ce} \geq 8$ (check).
Finally, we establish \cref{epsfit}.
We show that \whp for all $k$ in the range,
\begin{align}\label{expepsfitpf}
 \Delta
    &\coloneqq \Wo \kp^s-\Wok^s
    \leq 0.1 \epsk .
\end{align}
Note that
$\Delta \sim \Exp(\kbar-1)$, so
\begin{align*}
\Pr \parens{\Delta > 0.1\eps_k} =
\exp \parens{-0.1 \epsk \cdot (\kbar-1)}
= \exp(-\Omega(\rr0))
= \exp(-n^{\Omega(1)})
\end{align*}
by \cref{eq:Expr0}.
Then, by the union bound there is a negligible chance that
\cref{expepsfitpf} fails for any $k$.

\subsection{Robustness in \cref{expkmedium}}
With reference to \cref{robustness}, we complete the robustness argument for \cref{expkmedium}, showing that \cref{robustblurb} holds with high probability.
Here we have taken $\rr0 = \epsk \kbar$,
so the number of edges from a middle vertex to $V'_S$
(see \cref{Zvs})
is $Z^s_v  \sim \Bi(\epsk \kbar, \epsk)$,
with mean
\begin{align}\label{Expla}
  \la &= \rr0 \epsk = \epsk^2 \kbar
\end{align}
(see \cref{kmedlambda}).
Recall that if $\la$ is small we expect (see \cref{kmedtotalweight})
that to destroy all paths the adversary will have to delete
edges of total weight at least
$\epsk \, n \, \lambda^2 = \epsk^5 n \kbar^2$,
which will exceed $B_k$.
And, if $\la$ is large, then each $Z_v$ will have expectation close to $\la=\epsk^2 \kbar$, for a total cost $\epsk n$ times larger, namely $\epsk^3 n \kbar$,
and again this exceeds $B_k$.

We now show the details of these rough calculations,
including the probabilistic details,
applying \cref{lemma:BinMin} to $Z_v$ in the
two cases of $\la$ small and large.

For the adversary to delete all \st paths via $v$, he must delete at least
\[ Z_v\coloneqq \min(Z_v^s, Z_v^t) \]
edges, and to destroy all paths he must delete at least
\[ N\coloneqq \sum_{v \in M'} Z_v \]
edges.
As described in \cref{robustness},
we imagine a fixed deletion of $k$ edges on each of $s$ and $t$,
giving neighbour sets $V'_s$ and $V'_t$
and a set $M'$ of middle vertices,
eventually taking a union bound over all such choices.

\medskip \noindent \tmbf{If $\lambda \geq 2$}, then
by \cref{lemma:BinMin},
for each $v \in M'$, $\Pr(Z^s_v \geq 0.65 \la) \geq 1/4$.
Thus, $N$ stochastically dominates $0.65\lambda \cdot \Bi(0.99n, 1/4)$,
with expectation $> 0.1608 \la n$.
We shall consider it a \emph{failure} if $N \leq 0.16 \lambda n$.
Assuming success,
since each edge costs at least $\epsk$ to delete,
it costs at least $0.16 \epsk \la n = 0.16 \epsk^3 n \kbar$ to delete them all.
This exceeds $B_k$:
\begin{align*}
\frac{0.16 \, \epsk^3 n \kbar}{B_k}
  &= 0.16 \, \Ce^3 n^{-3/5} \kbar^{-6/5} B_k^{-2/5} n \kbar
   \eqnote{by definition of $\eps_k$}
\\&= 0.16 \, \Ce^3 B_k^{-2/5} n^{2/5} \kbar^{-1/5}
\\& \geq 0.15 \, \Ce^3 \CB^{-1/2} n^{1/5} n^{-1/5}
    \eqnote{by~\cref{eq:ExpBkUpper}}
\\ &> 1,
\end{align*}
using that $0.15 \cdot \Ce^3 \CB^{-1/2}>1$ (check).

Failure means that $N/(0.65 \la) \sim \Bi(0.99n, 1/4) \leq (0.16\la n)/(0.65 n)=(0.16/0.65)n$.
Noting that $0.99 \cdot 1/4 > 0.16/0.65$,
by \cref{lemma:BinDev}, the probability of failure is
$\exp(-\Omega(n))$.
By the union bound,
the total of the failure probabilities,
over all rounds and all adversary choices of the $k$ root edges at $s$ and $t$, is small:
\begin{align} \label{expcase1failure}
\sum_k {{\binom{\kk}{\rr0}}^2} & \cdot \exp(-\Omega(n))
\\& \leq \sum_k \, {(n^{\rr0})}^2 \exp(-\Omega(n))
\notag
\\ &= \sum_k \exp\parens{2\epsk \kbar \ln n-\Omega(n)}
\quad\text{(by $\rr0=\epsk \kbar$)}  \notag
\\&\leq n \exp(-\Omega(n)) = o(1), \notag
\end{align}
the penultimate inequality using $\epsk \kbar = O(n^{7/10})$ by~\cref{eq:ExpEkUpper}.

\medskip \noindent \tmbf{If $\lambda < 2$},
then by \cref{lemma:BinMin} $N$ stochastically dominates $\Bi(0.99n, 0.18\lambda^2)$,
with expectation $> 0.175 \la^2 n$.
We shall consider it a \emph{failure}
if $N \leq 0.17 \lambda^2 n = 0.17 \epsk^4 n \kbar^2$.
Each edge costs at least $\epsk$ to delete.
Assuming success, it thus costs at least $0.17 \epsk^5 n \kbar^2$ to delete them all, which exceeds $B_k$:
\begin{align*}
\frac{0.17 \epsk^5 n \kbar^2}{B_k}
&= 0.17 \Ce^5  \eqnote{by definition of $\eps_k$}
\\ &> 1,
\end{align*}
using that $0.17 \Ce^5>1$ (check).

By \cref{lemma:BinDev}, the probability of {failure} is
\begin{equation}\label{eq:expN2}
\Prob \parens{ N \leq 0.17 \epsk^4 n \kbar^2} = \exp(-\Omega(\epsk^4 n \kbar^2)).
\end{equation}
Over all rounds and adversary choices of edges incident to $s$ and $t$,
the total failure probability is at most
\begin{align}
\sum_k {\binom{\kk}{\rr0}}^2 & \cdot \Prob \parens{N < 0.17 \epsk^4 n \kbar^2}  \notag
\\ &\leq \sum_k \exp\parens{2\epsk \kbar \ln n- \exp(-\Omega(\epsk^4 n \kbar^2))}  \notag
\\&\leq n \exp(-\Omega(\epsk^4 n \kbar^2)) , \notag
\intertext{%
because
$\epsk^4 n \kbar^2$ is larger than $\epsk \kbar$
by a factor $\epsk^3 n \kbar$,
	which by~\cref{eq:ExpEkLower} is
	$\Omega(n^{-18/25} \kbar^{-6/5} n \kbar)=\Omega(n^{7/25} \kbar^{-1/5}) = \Omega(n^{2/25})$. Continuing, this is}
\notag \\
&\leq n \exp(-\Omega(n^{1/25} \, \kbar^{2/5}))
    \eqnote{invoking~\cref{eq:ExpEkLower} again}  \label{expcase2failure}
\\& = o(1) .  \notag
\end{align}

\subsection{Budgets in \cref{expkbig}} \label{expbudgetsbig}
We now establish \cref{budget} for the parameters of \cref{expkbig}.
\cref{ExpC1budgets} showed that \cref{budget} holds
for $k$ up to $\kstar \coloneqq \floor{\expkstar}$,
the point where \cref{expkmedium} ends and just before \cref{expkbig} begins,
so in particular $B_\kstar \geq U_\kstar - I_\kstar$.
For the regime of \cref{expkbig}, we redefine $I_k$ from \cref{Ik}.
Recall that $I_k$ is a lower bound on the edges incident to $s$ and $t$ used by the first $k$ paths. Previously, the sum defining $I_k$ in \cref{Ik} went to $k-1$ to avoid double counting the \ste edge.
In this regime, however, we need the sum to go $k$, as the $\Woi$ increase rapidly.
The weight of the \ste edge is distributed as $\Exp(1)$, thus \whp it costs at most
$\stebound$.
For $k>\kstar$, define
\begin{align} \label{Ikdefnew}
I_k & \coloneqq \sum_{i=1}^k \parens{\Wok^s + \Wok^t} - \stebound,
\end{align}
so that \whp $I_k$ is a lower bound on the incident edges:
the $\stebound$ term resolves the
potential double-counting of \ste.
We are now ready to check that \cref{budget} holds.
Following the derivation of \cref{budgetsClaim2},
for $k$ from $\kstar+1$ to $n-2$,
\begin{align}
U_k - I_k
&= (U_\kstar - I_\kstar) + [(U_k-U_\kstar) - (I_k - I_\kstar)]
\notag \\ &\leq B_\kstar + \sum_{k=\kstar+1}^{n-2} {7 \epsp}
   - (\Wo{\kstar}^s+\Wo{\kstar}^t-\stebound)
  \eqnote{see \cref{Ukdef}, \cref{Ik}, and \cref{Ikdefnew}}
\notag \\&\leq 114 \sqrt n + \sqrt n \cdot 7 C'_\eps n^{-1/6} + \stebound
    \eqnote{see \cref{eq:ExpBkUpper} and \cref{ExpBk2}}
\notag \\& \leq 115 \sqrt n
\notag \\& \leq \Bp  \eqnote{see \cref{ExpBk2}}, \label{ExpBudget}
\end{align}
using that $\CB' \geq 115$ (check).

\subsection{Robustness in \cref{expkbig}} \label{exprobustbig}
Again, our aim is to establish robustness of $R$ by
showing that \cref{robustblurb} holds with high probability,
and the argument is similar to but simpler than that for robustness in \cref{expkmedium}.

Since $\rr0=1$, both $V'_s$ and $V'_t$ have size 1.
For a vertex $v \in M'$, let $Z_v$ be the number of paths from $V'_s$ to $V'_t$ via $v$. There is only one such possible path, hence
\[ Z_v \sim \Bern \parens{\epsp^2}. \]
To destroy all \st paths the adversary must delete at least
\[ N\coloneqq \sum_{v \in M'} Z_v \]
edges. $N$ stochastically dominates $\Bi(0.99n, \epsp^2)$,
with expectation at least $0.99 \epsp^2$.
We declare the event $N \leq 0.98 \epsp^2 n$ a \emph{failure}.
Assuming success, destroying all \st paths would cost at least
$\epsp N \geq 0.98 \epsp^3 n$.
This exceeds $\Bp$, since by \cref{ExpBk2}
and $0.98 {C'_\eps}^3 > C'_B$ (check),
\begin{equation*}
\frac{0.98 \epsp^3 n}{B_k} = \frac{0.98 {C'_\eps}^3}{C'_B}
 >1 .
\end{equation*}

The probability of failure is
\begin{equation}\label{eq:expN3}
\Prob \parens{ N \leq 0.98 \epsp^2 n}
= \exp(-\Omega(\epsp^2 n))
= \exp(-\Omega(n^{2/3})).
\end{equation}

Over all rounds and adversary choices,
using that $\binom{\kk}{\rr0} = \binom{k+1}1 \leq n$,
the total failure probability is at most
\begin{align}
\sum_k {\binom{\kk}{\rr0}}^2 & \cdot \Prob(N \leq 0.98 \eps^2 n) \notag
\\ &\leq
\sqrt n \, n^2 \, \exp (-\Omega(n^{2/3}))  \eqnote{by~\cref{eq:expN3}} \label{eq:expfailure3}
\\& = o(1) . \notag
\end{align}

\subsection{Lower bound} \label{ExpLB}

As argued in the introduction of this section,
for any $k=o(n)$,
the lower bound follows from the uniform case. Thus it is sufficient if we show the lower bound for $k \geq n^{9/10}$, which we do now.

\begin{remark}\label{remdelta}
With high probability, for every pair of vertices $u$ and $v$ in $G' = G-s-t$,
there is a $u$--$v$ path in $G'$ of cost at most
$\delta = 20n^{-1/6}$
that is edge-disjoint from $P_1, \ldots, P_{n-1}$.
\end{remark}

\begin{proof}
The proof of \cref{expkbig}
showed that \whp, for all $k$ in the claim's range (up to $k=n-2$),
there is a cheap \st path (of cost given by \cref{ExpXkUB})
disjoint from $P_1,\ldots,P_k$,
\emph{because} for a given pair of neighbours $u,v$ of $s$ and $t$,
there is a $u$--$v$ path in $G'$ that is edge-disjoint from
these $k$ paths
and has cost at most $4 \epsk = 20n^{-1/6} \eqdef \delta$ (see \cref{ExpBk2}).
The existence of a $k+1$st \st path limits $k$ to $n-2$
since after that there are no new neighbours $u$ and $v$ of $s$ and $t$,
but the rest of the argument extends to $k=\nm$.

In particular, extending the definition \cref{ExpBk2} of $B_k$ and $\epsk$
to $k=\nm$,
the derivation of \cref{ExpBudget} extends without change and shows that
the budget $B_\nm$ covers the middle edges of all paths $P_1,\ldots,P_\nm$,
and the robustness argument also extends and shows \cref{eq:expN3} to hold for $k=\nm$.
Since the failure probability in \cref{eq:expN3} is exponentially small,
and there are fewer than $n^2$ pairs $\set{u,v}$ in $G'$,
\whp there is a cheap path (of cost $\leq \delta$) for every pair.
\end{proof}

For the remainder of this section we assume that the high-probability conclusion
of \cref{remdelta} holds.

Let $H_k^s$ be the weight of the heaviest edge incident to $s$ used by the first $k$ paths,
and let $L_k^s$ be the weight of the lightest edge incident to $s$ \emph{not} used by the first $k$ paths.
Define $H_k^t$ and $L_k^t$ likewise.

We claim that for all $k$ from 1 to $\nm$,
with $\delta=20n^{-1/6}$ as in \cref{remdelta},
\begin{align}\label{HLdelta}
	H_k^s - L_k^s \leq \delta .
\end{align}
We argue by contradiction.
Given $k$, let $P_i$, $i\leq k$, be the path using the edge of weight $H^s_k$.
By \cref{remdelta}, we can construct an \st path $Q$
whose $s$-incident edge is the one of weight $L^s_k$,
whose $t$-incident edge is the same as that of $P_i$,
and whose middle edges cost at most $\delta$
and are not used in $P_1,\ldots,P_{n-1}$.
This path $Q$ is cheaper than $P_i$:
its $s$-incident edge is cheaper by $H^s_k-L^s_k > \delta$,
its $t$-incident edge has the same cost,
and its middle edges
(costing at most $\delta$)
cost at most $\delta$ more than those of $P_i$.
Also, $Q$ is edge-disjoint from the first $i-1$ paths:
its $s$-incident edge
$L^s_k$ is not used even by the first $k$ paths,
the middle edges are disjoint from those of all $n-1$ paths,
and its $t$-incident edge is that used by $P_i$ (so not used by a previous path).
Thus, $Q$ should have been chosen in preference to $P_i$,
a contradiction, establishing \cref{HLdelta}.

Trivially,
$H_k^s \geq \Wok^s$.
Thus, from \cref{HLdelta},
\begin{align}\label{LW}
L^s_k
\geq H_k^s - \delta
\geq \Wok^s - \delta .
\end{align}

For $k \leq n-2$, the edge of $P_\kp$ incident to $s$ costs at least $L_k^s$
and the edge incident to $t$ at least $L_k^t$.
If $P_\kp$ is not the single-edge path $\set{s,t}$ these two edges are distinct,
so that $X_\kp \geq L^s_k + L^t_k$.
If $P_\kp$ is the single-edge path $\set{s,t}$ then $P_k$ is not,
and $X_\kp \geq X_k \geq L^s_\km + L^t_\km$.
Either way, by \cref{LW},
\begin{align}
X_{k+1}	&\geq L^s_\km + L^t_\km \notag
\\		&\geq \Wo\km^s + \Wo\km^t - 2\delta.  \label{bigkLB1}
\end{align}

Recall that we are concerned here with $k \geq n^{9/10}$.
By \cref{lemma:edge-orderstat}, for all such $k$, and for any $\gamma>0$,
\whp $\Wok \geq (1-\gamma) \EWW k$.
Since the exponential random variable is stochastically greater than the uniform,
$\EWW k > k/n = \Omega(n^{-1/10})$,
while
$\delta = 20n^{-1/6} = o(\EWW k)$.
From \cref{muX} it is clear that $\EWW \km \asymp \EWW \kp$
(for any $k=\omega(1)$),
and we subsume the asymptotic error into the constant $\gamma$.
Thus, from \cref{bigkLB1}, for any $\gamma>0$, \whp, for all $k \geq n^{9/10}$,
\begin{align*}
 X_k \geq (1-\gamma) 2 \EWW k ,
\end{align*}
completing the proof of the lower bound in \cref{Texp}.

\section{Expectation}\label{sec:expectation}

In this section we prove \cref{thm:expectation}.
We treat the uniform and exponential models at the same time.
Let $\evp k$ be the event that $P_k$ exists.
Clearly $\Pr(\evp k) \geq \Pr(\evp {n-1})$.
By \cref{Tmain} (for the uniformly random model)
and \cref{Texp} (for the exponential model),
$\Pr(\evp {n-1}) = 1-o(1)$.
This establishes the first part of the theorem.
Then, let $\mu_k = 2\E \Wok + \ln n /n$
(so for the uniform model, $\mu_k=\wo(k)$).
It suffices to show that
\begin{align}
\E[X_k \mid \evp k] &= (1+o(1)) \mu_k  \label{Emu}
\end{align}
uniformly in $k$.

First, we show the lower bound
implicit in \cref{Emu}.
Fix $\eps > 0$.
Let $\evl k$ be the event that (jointly)
$P_k$ exists and $X_k \geq (1-\eps) \mu_k$.
By \cref{Tmain} (for the uniform model)
and \cref{Texp} (for the exponential model),
$\evl k$ holds with probability $1-o(1)$ uniformly in $k$.
Thus,
\begin{align*}
E[X_k \mid \evp k]
 &\geq \Pr(\evl k) \E[X_k \mid \evp k \wedge \evl k ]
 \geq (1-o(1)) \, (1-\eps) \mu_k.
\end{align*}
Since this holds for any $\eps$, we have that
\[ \E[X_k \mid \evp k] \geq (1-o(1)) \mu_k. \]

We now establish the corresponding upper bound.

\subsection{Small \tp{$k$}{k}} \label{expSmallk}
First, we consider the range $k \leq n^{4/10}$.
We will need the following lemma in \cref{EXkU}.

\begin{lemma}\label{lem:large-eps}
There exists an absolute constant $C>0$ such that, for all $\eps>C$,
in both the exponential and uniform models,
for all $k=o(\sqrt n)$ the probability of the event
\begin{align}\label{eq:indC}
	X_k > (1+\eps) \mu_k
\end{align}
is $\OO{n^{-1.9}}$.
\end{lemma}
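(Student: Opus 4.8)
The plan is to return to the construction and robustness argument of \cref{sec:k-small}, now keeping track of the dependence on $\eps$ and allowing $\eps$ to be a large absolute constant $C$. Recall that for fixed $\eps$ that section produces, for each $k=o(\sqrt n)$, a structure $\Rk$ every one of whose \st paths has cost at most $(1+\eps)\wo(k)$ (this is \cref{Rpathcost}, valid for any $\eps>0$), and shows that, conditional on $\Rk$ being successfully built and robust, a contradiction argument using the monotonicity $X_1\leq X_2\leq\cdots$ and robustness of $\Rk$ yields $X_{\kp}\leq(1+\eps)\wo(k)$; the failure probability of the construction is at most $O(n^{-1.9})+\exp(-\Omega(s(k)))$, as in \cref{ksucceeds}, with the constant hidden in the exponent depending on $\eps$. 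Since $s(k)=2k+\ln n\geq\ln n$, it suffices to arrange that this hidden constant grows with $\eps$ — indeed, that it exceeds $1.9$ once $\eps\geq C$ — for then $\exp(-\Omega(s(k)))\leq n^{-1.9}$, hence $\Pr(X_{\kp}>(1+\eps)\wo(k))=O(n^{-1.9})$; reindexing (replacing $k$ by $k-1$) and using $\wo(k-1)<\wo(k)\leq\mu_k$ (the last inequality because $\E\Wok\geq k/n$ in both models) then gives the asserted bound on $\Pr(X_k>(1+\eps)\mu_k)$ in the uniform model. The exponential case follows by the coupling of \cref{remark:blackbox} and the opening of \cref{ExpBounds}, since for $k=o(\sqrt n)$ all edges used cost $o(1)$.

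The only genuine work is that two ingredients of \cref{sec:k-small} were stated only \fsse: the heavy-edge budget \cref{Bheavy} and the path-length bound \cref{pathlength}, which feeds \cref{Bany}. For $\eps\geq1$ these should be replaced by: the first $k$ paths contain at most $\tfrac{11(1+\eps)k}{\eps}\leq22k\leq22s$ heavy edges, and (by \cref{LLenBd}) each has length at most $19(1+\eps)s\leq38\eps s$, for a total of at most $19\eps s^2$ edges — a factor $\eps$ more than in the small-$\eps$ regime. I then have to recheck that $\Rk$ still fans out enough, and this turns out to be automatic with the parameters already in use, because $\rr0$ and $\rr2$ are already proportional to $\eps s$: for $\eps\geq100$ one has $\rr1=\lceil10000/\eps^2\rceil=1$, so after the adversary deletes the $k$ root edges there remain $\rr0=\lceil\tfrac1{10}\eps s\rceil=\Theta(\eps s)$ heavy edges out of level $1$; for $\eps\geq C$ large this exceeds $22s$, leaving $\Rsub1=\Theta(\eps s)$ of them, whence $\rr2=\lceil\tfrac1{10}\eps s\rceil$ produces $\Rsub1\rr2=\Theta(\eps^2 s^2)$ edges out of level $2$, comfortably above the enlarged budget $19\eps s^2+0.02s^2$ and leaving $\Rsub2=\Theta(\eps^2 s^2)$. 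The level-$3$ shortest-path trees and the middle-edge step are unchanged, and in particular $\card{V(\Rk)}=O(\rr0\rr1\rr2 d)=O(\eps^2 s^2 d)=O(\eps^2\sqrt{ns\ln n})=o(n)$ for $k=o(\sqrt n)$ with $\eps$ constant (and $O(\eps^2 s^2)=o(n)$ in the regime $d=1$). I expect this re-verification of robustness under the $\eps$-fold larger budget of \cref{Bany} to be the main obstacle, though, as just indicated, it causes no real difficulty.

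It then remains to carry out the (now routine) probability bookkeeping with the $\eps$-dependence made explicit. The binomial means controlling the success of levels $0,1,2$ are $\Theta(\eps s)$, with relative deviations bounded below by an absolute constant, so each of those levels fails with probability at most $n^{1/2}\exp(-\Omega(\eps s))=n^{1/2-\Omega(\eps)}$ (using $s\geq\ln n$ and the union bound over the $O(\eps s)$ vertices at each level); level $3$ fails with probability $\exp(-\Omega(s^2))$ and the adversary defeats $\Rk$ with probability $\exp(-\Omega(s^2\ln n))$, as in \cref{level3fail,smallkAdversaryFailure} (the larger middle-edge inclusion probability $\tfrac19\eps\wo$ only helps, and in fact makes both bounds smaller for large $\eps$); and \cref{LLenBd} contributes the $O(n^{-1.9})$ term. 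Taking $C$ large enough that $n^{1/2-\Omega(\eps)}\leq n^{-1.9}$ for all $\eps\geq C$, every one of these failure probabilities except the last is $\leq n^{-1.9}$, so the total is $O(n^{-1.9})$, which completes the plan.
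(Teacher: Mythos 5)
Your proposal is correct and follows essentially the same route as the paper: rerun the Section~\ref{sec:k-small} construction with $\eps$ large, enlarge the adversary's heavy-edge and edge-count budgets accordingly, check the fanout still beats the budgets, and track the $\eps$-dependence of the level-$0$/$1$/$2$ failure probabilities $\exp(-\Omega(\eps s))=n^{-\Omega(\eps)}$ so that choosing $C$ large makes everything except the one-time \cref{LLenBd} term at most $n^{-1.9}$, with the exponential case handled by the same coupling remark. The only (cosmetic) difference is that the paper simplifies by deleting level~1 outright (identifying level-2 with level-1 vertices and using only the edge-count budget $10s^2\eps$), whereas you retain level~1 and observe that for $\eps$ large it degenerates to $\rr1=1$ with the enlarged heavy-edge budget; both variants verify the same arithmetic.
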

\begin{proof}
By the reasoning given in the introduction of \cref{ExpBounds}, it is sufficient to show the result in the uniform case, where $\mu_k=\kcost$.
We use the same argument as developed in \cref{sec:k-small}, where we prove \cref{Tmain} up to $k=o(\sqrt n)$.
Our argument in \cref{sec:k-small}
(see \cref{indHyp})
was that
for any sufficiently small $\eps>0$,
\begin{align}\label{eq:ind-large}
\text{if }
	X_i \leq \ope \parens { \frac{2i}n+\frac{\ln n}n }
\text{for all $i \leq k$, then \whp the same holds for $i=k+1$.}
\end{align}
We proved this by constructing a structure $R=\Rk$ in $G$,
in which after deleting $k$ paths,
each of cost $\leq \ope (2k/n+\ln n/n)$ from $G$,
\whp there remains a path in $R$ satisfying the same cost bound.
By \cref{ksucceeds}, the probability of failure was
$\OO{n^{-1.9}}+\exp(-\Theta(s(k)))$.
This does not suffice since for $k$ small the second term
may exceed $\OO{n^{-1.9}}$ (recall $s = 2k + \ln n$).
	
To prove the lemma, we will show that
for some sufficiently \emph{large} constant $\eps$,
the failure probability in \cref{eq:ind-large} is $\OO{n^{-1.9}}$.
As noted in \cref{warning}, a few parts of the
argument developed in \cref{sec:k-small}
rely on $\eps$ being sufficiently small,
and here we will detail the changes needed.
Principally, we will make one modification (a simplification)
to \cref{sec:k-small}'s construction of $R$.
We will also track the dependence
of key Landau-notation expressions on $\eps$.

\medskip

Recall from \cref{sdef,w0def} that $s=2k + \ln n$ and $w_0 = s/n$.

Parallelling the structure of \cref{sec:k-small},
we start by reviewing the adversary's edge-count budget.
This was given by \cref{Bany} which,
through its dependence on \cref{pathlength},
held only for sufficiently {small} $\eps$.
For sufficiently large $\eps$,
modulo the one-time failure probability $\OO{n^{-1.9}}$ from \cref{LLenBd},
each of the first $k$ paths has length
$\leq (1+\eps) w_0 \cdot 19 n < 20 s \eps$,
and the total length of the first $k$ paths is at most
\begin{align}\label{eq:budget-large}
	20 k s \eps < 10 s^2 \eps ,
\end{align}
so we now take this to be the adversary's budget.

We build level-0 edges of $R$ exactly as in \cref{level0},
and using the same parameter $\rr0$.
That is, we add the cheapest $\kk$ edges incident on $s$,
with $\rr0= \ceil{\tfrac1{10} \eps s}$ as in \cref{k0};
the opposite endpoints of these edges are the level-1 vertices.
Recall that we declared this step a failure if the number $X$ of edges with weights in the interval $[0, \tfrac k n+\frac19 \eps \wo]$
is smaller than $\kk$.
Note that $X \sim \Bi(n', \tfrac k n+\frac19 \eps \wo)$, thus $\E X = (1-o(1)) \, (k+\frac19 \eps s)$,
and failure means that $X <\kk$, i.e., that
\begin{align*}
	\frac{X}{\E X}
	= (1+o(1)) \, \frac{k+\frac1{10} \eps s}{k+\frac19 \eps s}
	\leq \frac{10}{11}
\end{align*}
for $\eps$ sufficiently large.
Then, analogously to \cref{level0fail}, the failure probability by \cref{lemma:BinDev} is at most
	\begin{align}
	\Pr(X < \tfrac{10}{11} \E X)
	&\leq \exp(-\Omega(\E X))
	\leq \exp(-\Omega(\eps s)). \label{eq:large-level0fail}
	\end{align}
	
We skip constructing level-1 edges as in \cref{level1},
instead setting the level-2 vertices identical to level-1 vertices.
(There are no edges between these levels;
we have ``level 2'' only to keep the level numbering the same as before.)

We build level-2 edges exactly as before, with the same parameter $\rr2$,
linking to each level-2 vertex its
cheapest $\rr2= \frac{1}{10} \eps s$ neighbours
(which become the level-3 vertices).
The calculations in \cref{level2} hold for any $\eps>0$, and from \cref{level2fail} the probability of any failure on this level is
\begin{align}\label{eq:large-level1fail}
	\leq \exp{-\Theta(\eps s)}.
\end{align}

The adversary's deletions of edges incident on $s$
must leave $r_0$ vertices at level 1 (a.k.a.\ level 2),
thus $\rr0 \rr2 = \eps^2 s^2/100$ edges leading to level~3.
By \cref{eq:budget-large} the adversary
is allowed to delete at most $10 s^2 \eps$ edges,
so for $\eps$ sufficiently large,
at least $2 s^2$ level-3 vertices remain;
this is the same as before,
and will continue to suffice.
	
From level~3 we construct shortest-path trees just as in \cref{level3},
whose calculations hold for any $\eps > 0$.
To recapitulate, these trees are built to a size \cref{ddef}
independent of $\eps$,
the calculations made are valid for all $\eps$,
and the result (here as in \cref{sec:k-small})
is that each tree fails with some probability $o(1)$,
but the level as a whole fails only if at least $0.01 s^2$ trees fail,
which occurs with probability only $\exp(-\Omega(s^2))$
(see \cref{level3fail}).
	
This concludes the modified construction of $R$.
The remainder of the argument is unchanged from \cref{sec:k-small}.
In the absence of failures,
the maximum weight of any \st path in $R$ remains
at most $(1+\eps) w_0$ per \cref{Rpathcost}
(indeed, a little less as we've skipped the level-1 edges).
The number of successful level-3 trees is $\Omega(s^2)$ as before,
and the calculations leading to the
probability that an adversary can destroy all cheap paths in $R$
are unaffected: this probability remains $\exp(-\Omega(s^2 \ln n))$
as in \cref{smallkAdversaryFailure},
which is dominated by other failure probabilities.

Tallying up, as in \cref{sec:small-success}, we have a one-time failure probability of $\OO{n^{-1.9}}$ from \cref{LLenBd}.
Out of levels 0, 2 and 3 we
have failure probabilities given respectively by
\cref{eq:large-level0fail}, \cref{eq:large-level1fail} and \cref{level3fail},
namely
$\exp(-\Omega( \eps s))$,
$\exp(-\Omega( \eps s))$ and
$\exp(-\Omega( s^2))$.
Since $s > \ln n$, for some $\eps$ sufficiently large, the net failure probability is $\OO{n^{-1.9}}$, as claimed.
\end{proof}

Let $C$ be the constant in \cref{lem:large-eps}.
Separately, fix any sufficiently small $\eps>0$.
Let
\begin{align*}
U_1 &= [0, (1+\eps){\mu_k}), \\
U_2 &= [(1+\eps){\mu_k}, C{\mu_k}), \\
U_3 &= [C{\mu_k}, \infty).
\end{align*}
Let $\eva_i$ be the event that $X_k \in U_i$.
By \cref{Tmain}, $\Pr(\eva_1) = 1-o(1)$ and $\Pr(\eva_2)=o(1)$,
and by \cref{lem:large-eps}, $\Pr(\eva_3)=O(n^{-1.9})$.

Since here we are considering $k \leq n^{4/10} \leq \expcutoff$,
with reference to the proof of \cref{rmk:existence},
one possible choice for $P_k$ is some path of length 2
(there must remain at least one such),
and thus, deterministically,
\begin{align}\label{eq:length2}
X_k \leq W_s + W_t ,
\end{align}
where $W_v$ denotes most expensive edge out of $v$
($W_v = W^v_{\os{n-1}}$ in the notation of \cref{Wkv}).

In the uniform model, \cref{eq:length2} means that,
deterministically, $X_k \leq 2$.
Then,
\begin{align}
\E[X_k]
&=\Pr(\eva_1) \E[X_k \mid \eva_1] + \Pr(\eva_2) \E[X_k \mid \eva_2] + \Pr(\eva_3) \E[X_k \mid \eva_3] \notag \\
&\leq (1-o(1)) \cdot (1+\eps) {\mu_k} + o(1) \cdot (1+C){\mu_k} + O(n^{-1.9}) \cdot 2 \notag \\
&\leq (1+\eps+o(1)){\mu_k}  , \label{EXkU}
\end{align}
since $\mu_k > \ln n/n$.
As this holds for arbitrarily small $\eps > 0$,
\begin{align}
\E[X_k] \leq (1+o(1)) {\mu_k} .  \label{eq:expectLB}
\end{align}

For the exponential model the same argument applies,
once we control
$\E[X_k \mid \eva_3]$.
We make use of the following inequality.
Let $Z$ be a random variable with CDF $F$, and  $\eva$ be an event with $\Pr(\eva) = \alpha$.
Then,
\begin{align}\label{eq:F}
\E[Z \mid \eva] \leq \E[Z \mid Z > F^{-1} (1-\alpha) ].
\end{align}
In the case that $Z$ is an exponential random variable with rate $\lambda$,
$F(z)=1-\exp(-\lambda z)$,
so $F^{-1} (1-\alpha) = -\ln(\alpha)/\lambda$.
By the memoryless property of the exponential, the RHS of \cref{eq:F} is
$\E[Z]+F^{-1} (1-\alpha)$,
giving
\begin{align}
 \E[Z \mid \eva]
  & \leq \frac{1-\ln(\alpha)}{\lambda} . \label{ExpCondExp}
\end{align}

Recall from \cref{ordersumexp} that $W_v = \sum_{i=1}^{n-1} Z_i$ where
$Z_i \sim \Exp(i)$.
Condition on the event $\eva_3$,
taking $\alpha \coloneqq \Pr(\eva_3) = O(n^{-1.9})$.
By \cref{ExpCondExp},
\begin{align}
\E[W_k \mid \eva_3]
=\sum_{i=1}^{n-1} \E [Z_i \mid \eva_3]
\leq \sum_{i=1}^{n-1} \frac{1-\ln(\alpha)}{i}
\sim (1-\ln(\alpha)) \ln n
= O(\ln^2 n).  \label{eq:Walpha}
\end{align}
By \cref{eq:length2}, \cref{eq:Walpha} and linearity of expectation,
\begin{align}
\Pr(\eva_3) \E[X_k \mid \eva_3]
\leq \alpha \E[W_s + W_t \mid \eva_3]
= 2 \alpha O(\ln^2 n)
= O(n^{-1.9} \, \ln^2 n) ,
\label{Exp2Path}
\end{align}
which is $o(\mu_k)$
since $\mu_k > \ln n/n$.
Thus \cref{EXkU} holds also for the exponential model
(the change to the middle line of the calculation affects nothing),
whereupon so does \cref{eq:expectLB}.

\subsection{Large \tp{$k$}{k}}
For $k \geq n^{4/10}$, we gather the failure events in \cref{largekUB}.
First, we have $X_{n^{4/10}} \leq 3 n^{4/10} / n$
with failure probability $O(n^{-1.9})$,
from \cref{Xn0.4} and \cref{Pn0.4}.
Then, we have to check two types of failures:
failure of \cref{path+} to be an upper bound on \cref{path1}
(because the edge order statistics are not as expected),
and violation of \cref{XkWok}
(because $R$ fails to be robust against the adversary).

Failure of \cref{path+} as an upper bound is, in the uniform model, checked
through violation of \cref{eq:Wkk0},
the paragraph after \cref{eq:Wkk0} showing failure
to occur w.p.\ at most $\exp(-\Omega(n^{0.01}))$.
Likewise, in the exponential model
it is checked in and following \cref{eq:expWkk0},
with a failure probability of $O(\exp(-\Omega(n^{3/50})))$.

The failure probability of \cref{XkWok}
in the uniform model is calculated for three cases:
near \cref{case1failure} as $n \exp(-\Omega(n))$,
near \cref{case2failure} as $n \exp(-\Omega(n^{11/25}))$,
and near \cref{eq:unifailure3} as $14 n^{5/2} \exp(-\Omega(n^{2/3}))$.
The failure probability
in the exponential model is also calculated for three cases:
near \cref{expcase1failure} as $n \exp(-\Omega(n))$,
near \cref{expcase2failure} as $n \exp(-\Omega(n^{1/25}))$,
and near \cref{eq:expfailure3} $n^{5/2} \exp(-\Omega(n^{2/3}))$.

Thus, the failure probabilities for \cref{path+} and \cref{XkWok}
are all $O(\exp(-n^{0.01}))$,
so the probability of any failure affecting any $k>n^{4/10}$ is $O(n^{-1.9})$.

Let
\begin{align*}
U_1 &= [0, (1+\eps){\mu_k}) \\
U_2 &= [(1+\eps){\mu_k}, \infty),
\end{align*}
and let $\eva_i$ be the event that $P_k$ exists and $X_k \in U_i$.
Thus $\Pr(\eva_1)=1-o(1)$ and $\Pr(\eva_2) = O(n^{-1.9})$.

Conditioning on the event $\evp k$ that $P_k$ exists,
this path clearly has cost
\[ X_k \leq Z \coloneqq \sum_{v \in V(G)} W_v  \]
(analogous to \cref{eq:length2}).
In the uniform model, deterministically, $Z \leq n$.
In the exponential model,
the event $\eva_2$ here has the same probability as event $\eva_3$
in \cref{expSmallk}, so we may reuse \cref{eq:Walpha}, obtaining
\begin{align*}
\E[Z \mid \eva_2]
&= \sum_{v \in V(G)} \E[W_v \mid \eva_2]
 = n \, O(\ln^2 n)
 = o(n^{1.1}) .
\end{align*}

Thus, in both the uniform and exponential cases,
\begin{align}
\E[X_k \mid \evp k]
&=\Pr(\eva_1) \E[X_k \mid \eva_1] + \Pr(\eva_2) \E[X_k \mid \eva_2]  \notag \\
&\leq (1-o(1)) \cdot (1+\eps) {\mu_k} + O(n^{-1.9}) \cdot o(n^{1.1}) \notag \\
&=(1-o(1))(1+\eps){\mu_k}  , \label{EXkUlarge}
\end{align}
since $\mu_k > 2k/n > n^{-6/10} = \omega(n^{-0.8})$.
As this holds for arbitrarily small $\eps > 0$, for all $k \geq n^{4/10}$,
\begin{align}
\E[X_k \mid \evp k]
  \leq (1+o(1)) \mu_k , \label{eq:expectUB}
\end{align}
completing the proof.

\section*{Acknowledgements}
We thank Alan Frieze and Wes Pegden for an initial discussion of the
second-shortest path,
and Alan for noticing that minimum-cost $k$-flow
(\cref{rmk:pathsFk}) was not an open problem
but immediately implied by our other results.
We also thank two anonymous referees for helpful suggestions.

\printbibliography
\end{document}